\newtheorem{theorem}{\bf Theorem}[section]
\newtheorem{corollary}[theorem]{\bf Corollary}
\newtheorem{lemma}[theorem]{\bf Lemma}
\newtheorem{conjecture}[theorem]{\bf Conjecture}
\newtheorem{fact}[theorem]{\bf Fact}
\newcommand\drop[1]{}
\newcommand{\D}{\ensuremath{\mathcal{D}}}
\newcommand{\V}{\ensuremath{\mathcal{V}}}
\newcommand{\W}{\ensuremath{\mathcal{W}}}
\newcommand{\Y}{\ensuremath{\mathcal{Y}}}
\renewcommand{\P}{\ensuremath{\mathcal{P}}}
\newcommand{\Q}{\ensuremath{\mathcal{Q}}}
\renewcommand{\epsilon}[0]{\varepsilon}
\renewcommand\a{\alpha}
\renewcommand\b{\beta}
\newcommand{\DEF}[1]{{\em #1\/}}
\newcommand\myitemsep{\setlength{\itemsep}{0pt}}
\newcommand{\qed}{\hbox{\rule{6pt}{6pt}}} 
\newcommand{\R}{\mbox{\boldmath $R$}}
\newif\ifplaintheorems
\def\plaintheorems{\plaintheoremstrue}
\let\thmheadfont\bf
\def\showlabel#1{}
\def\showfiglabel#1{}
\renewcommand\int{\mathop{\hbox{{\rm int}}}}
\newcommand{\boundary}{\partial}
\newcommand{\closure}{\overline}
\newcounter{unnumber}
\newcommand\NN{\hbox{{$\mathbb N$}}}
\renewcommand\a{\alpha}
\renewcommand\b{\beta}
\let\epsilon=\varepsilon
\def\GrabProofArgument[#1]{ #1: \egroup\ignorespaces}
\def\proof{\noindent\textbf\bgroup Proof%
           \@ifnextchar[{\GrabProofArgument}{: \egroup\ignorespaces}}
\title{{\bf The odd Hadwiger's conjecture is ``almost'' decidable}}
\author{
Ken-ichi Kawarabayashi\thanks{
National Institute of Informatics, 2-1-2 Hitotsubashi, Chiyoda-ku, Tokyo, Japan.
Research is partly supported by JST ERATO
	Kawarabayashi Large Graph Project and Mitsubishi Foundation.
Email:
{\tt k\_keniti@nii.ac.jp}}
}
\begin{document}

\date{}

\maketitle

\baselineskip 13.3pt

\begin{abstract}
The concept ``odd-minor'' which is a generalization of
minor-relation has received considerable amount of attention by many
researchers, and led to several beautiful conjectures and results.
We say that $H$ has an odd complete minor of order $l$
if there are $l$ vertex disjoint trees in $H$ such that every two of them are
joined by an edge, and in addition, all the vertices of trees are two-colored
in such a way that the edges within the trees are bichromatic, but the edges
between trees are monochromatic. Hence it is easy to see that
odd minor is a generalization of minor.
Let us observe that
the complete bipartite graph $K_{n/2,n/2}$ certainly contains
a $K_k$-minor for $k \leq n/2$, but on the other hand, it does not contain
any odd $K_k$-minor for any $k \geq 3$. So odd-minor-closed graphs seem to be much weaker than minor-closed graphs.

The odd Hadwiger's conjecture, made by Gerads and Seymour in early 1990s, is an analogue of the famous Hadwiger's conjecture.
It says that every graph with no odd $K_t$-minor is $(t-1)$-colorable. This conjecture is known to be true for $t \leq 5$, but
the cases $t \geq 5$ are wide open. So far, the most general result says that every graph with no odd $K_t$-minor is
$O(t \sqrt{\log t})$-colorable.

In this paper, we tackle this conjecture from an algorithmic view, and show the following:

For a given graph $G$ and any fixed $t$, there is a polynomial time algorithm to
output one of the following:

\begin{enumerate}
\item
a $(t-1)$-coloring of $G$, or
\item
an odd $K_{t}$-minor of $G$, or
\item
after making all ``reductions'' to $G$, the resulting graph $H$ (which is an odd minor of $G$ and which has no reductions)
has a tree-decomposition $(T, Y)$ such that torso of each bag $Y_t$ is either
\begin{itemize}
\item
of size at most $f_1(t) \log n$ for some function $f_1$ of $t$, or
\item
a graph that has a vertex $X$ of order at most $f_2(t)$ for some function $f_2$ of $t$ such that $Y_t-X$ is bipartite. Moreover,
degree of $t$ in $T$ is at most $f_3(t)$ for some function $f_3$ of $t$.
\end{itemize}
\end{enumerate}

Let us observe that the last odd minor $H$ is indeed a minimal counterexample to the odd Hadwiger's conjecture for the case $t$.
From this we obtain the following:

\medskip

For a given graph $G$ and any fixed $t$, there is a polynomial time algorithm to
output one of the following:

\begin{enumerate}
\item
a $(t-1)$-coloring of $G$, or
\item
an odd $K_{t}$-minor of $G$, or
\item
after making all ``reductions'' to $G$, we can color the resulting graph $H$ (which is an odd minor of $G$ and which has no reductions) with at most $\chi(H)+1$ colors
in polynomial time, where $\chi(H)$ is the chromatic number of $H$.
\end{enumerate}
In the last conclusion, we can actually figure out whether or not
$H$ contains an odd $K_t$-minor.
\end{abstract}

\vfill \baselineskip 11pt \noindent 17 May 2015, revised \date.

\medskip
{\bf Keywords} : Odd Hadwiger's Conjecture, the Four Color Theorem, Polynomial algorithm.
\vfil\eject

\baselineskip 13.0pt

\section{Introduction}

\subsection{Hadwiger's conjecture}
Hadwiger's Conjecture, dated back from 1943,
suggests a far-reaching generalization of the Four Color Theorem
\cite{4ct1,4ct2,4ct3}. Definitely one of the deepest open problems in
graph theory. It says that any graph without $K_k$ as a minor is $(k-1)$-colorable. Let us give some known results for Hadwiger's conjecture
(for more details, see \cite{jt}). In 1937,
Wagner \cite{Wa37} proved that the case $k=5$ of
the conjecture is, indeed, equivalent to the Four Color Theorem.
In 1993, Robertson, Seymour and Thomas \cite{RST1} proved that the case
$k=6$ would follow from the Four Color Theorem. The cases $k \geq 7$ are wide open,
and even for the case $k=7$, the partial result in \cite{kt} has been best known.
It is known that
any graph $G$ without $K_k$ as a minor is $O(k \sqrt{\log k})$-colorable,
see \cite{kost1,thomason1}, but currently, this result is
best known for the general case.

\subsection{The Odd Hadwiger's conjecture}

Recently, the concept ''odd-minor'' has drawn attention by many researchers,
because of its relation to graph minor theory, and Hadwiger's conjecture.
Formally, we say that $H$ has an \DEF{odd complete minor of order $l$}
if there are $l$ vertex disjoint trees in $H$ such that every two of them are
joined by an edge, and in addition, all the vertices of trees can be two-colored
in such a way that the edges within the trees are bichromatic, but the edges
between trees are monochromatic. Hence it is easy to see that
odd minor is a generalization of minor.

Odd-minor-closed graphs seem to be much weaker than minor-closed graphs.
Indeed, the complete bipartite graph $K_{n/2,n/2}$ certainly contains
a $K_k$-minor for $k \leq n/2$, but on the other hand, it does not contain
any odd $K_k$-minor for any $k \geq 3$. In fact,
any graph $G$ without $K_k$-minors is $O(k \sqrt{\log k})$-degenerate, i.e,
every induced subgraph has a vertex of degree at most $O(k \sqrt{\log k})$ (\cite{kost1,thomason1}).
So $G$ has at most $O(k \sqrt{\log k} n)$ edges. On the other hand,
some graphs with no odd-$K_k$-minor may have $\Theta(n^2)$ edges.

This seems to make huge difference. On the other hand,
odd minors are actually motivated by graph minor theory, and many researchers believe that there would be some analogue
of graph minor theory, and some connections to the well-known conjecture
of Hadwiger \cite{hadwiger}.

Concerning the connection to Hadwiger's conjecture, Gerards and Seymour (see \cite{jt}, page 115.) conjectured the
following.
\begin{conjecture}
\label{odd}
For all $l\ge1$, every graph
with no odd  $K_{l+1}$ minor  is  $l$-colorable.
\end{conjecture}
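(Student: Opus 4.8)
The plan is to study a minimal counterexample. Suppose the conjecture fails and let $G$ minimize $|V(G)|$ among graphs with no odd $K_{l+1}$ minor and $\chi(G)\ge l+1$. Minimality makes $G$ vertex-critical, hence $\delta(G)\ge l$; more importantly, it should let us assume $G$ admits none of the ``reductions'' that are natural for the odd-minor relation --- splitting off clique cutsets of bounded size, contracting an edge lying on a bichromatic structure without creating an odd $K_{l+1}$ minor, deleting a bounded vertex set whose removal leaves an easily colorable remainder that extends back, and so on. The first task is to assemble a list of such reductions complete enough that an irreducible minimal counterexample lands in the genuinely hard core: large minimum degree, high connectivity, no small separator producing simpler pieces, and no cheap bipartite substructure available for contraction.

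Next, I would invoke the rough structure theorem for graphs with no odd $K_t$ minor (in the spirit of Geelen--Gerards--Reed--Seymour--Vetta and of the odd-minor structure theorem): such a $G$ has a tree-decomposition $(T,Y)$ in which the torso of every bag is, after deleting an apex set of size at most $f_2(l)$, either bipartite or drawn in a surface of bounded genus with a bounded number of vortices of bounded depth, and in which all adhesion sets are small. The irreducibility from the first step should then be used to strip away the bounded-genus and vortex debris --- an irreducible minimal counterexample cannot afford the local structure that a nontrivial surface piece would let us simplify --- so that after reductions every torso is genuinely ``bipartite plus at most $f_2(l)$ apices'' and the decomposition tree has bounded degree. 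This is exactly conclusion (3) of the algorithmic theorem quoted in the abstract (with $t=l+1$), now read as a structural statement about the irreducible core rather than as an algorithm.

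Finally, color. A bipartite torso needs $2$ colors; reinstating the $f_2(l)$ apices costs at most $f_2(l)$ more, and a clique-sum argument along the bounded adhesion sets, re-using colors greedily down the bounded-degree tree $T$, glues the local colorings into a coloring of all of $G$. As written this produces only an $f(l)$-coloring --- the familiar $O(l\sqrt{\log l})$-type bound already recorded in the abstract --- and here is where the whole difficulty of the conjecture is concentrated: replacing $f(l)$ by the exact value $l$. The main obstacle is thus not the existence of structure but its \emph{quantitative} sharpness. One needs either an odd-minor structure theorem in which the apex number is controlled tightly enough relative to the excluded clique size that ``$2+f_2(l)$'' collapses to ``$\le l$'', or a global discharging/criticality argument on $(T,Y)$ that amortizes the apices against the many edges forced by vertex-criticality; for $l\ge 5$ neither ingredient is presently available. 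For small $l$ the program is completed by bootstrapping from the Four Color Theorem, exactly as in the known cases $l\le 4$, and I would expect any proof for general $l$ to route through an FCT-flavored base case together with the sharpened structure theorem above.
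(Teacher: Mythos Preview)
The statement you are attempting to prove is Conjecture~\ref{odd}, the odd Hadwiger conjecture itself. The paper does \emph{not} prove it; it is stated as a conjecture, explicitly declared open for $l\ge 5$, and the paper's contributions are the algorithmic/structural Theorems~\ref{main1} and~\ref{main2} about what a minimal counterexample must look like. There is therefore no ``paper's own proof'' to compare your proposal against.

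Your proposal is candid about this: you yourself write that the program yields only an $f(l)$-coloring and that ``replacing $f(l)$ by the exact value $l$'' is where ``the whole difficulty of the conjecture is concentrated,'' and that for $l\ge 5$ ``neither ingredient is presently available.'' That is not a proof; it is a research outline with the decisive step missing. A proof proposal that locates its own gap and declares it unfillable is, by definition, not a proof.

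Two further points where your outline diverges from what the paper actually establishes. First, you claim irreducibility lets you ``strip away the bounded-genus and vortex debris'' so that every torso becomes bipartite-plus-apices. That is not what Theorem~\ref{main1} says: after all reductions the torsos are \emph{either} of size $O(f_1(t)\log n)$ \emph{or} nearly bipartite with bounded tree-degree. The surface pieces do not vanish; they are shown to have logarithmic tree-width, which is a very different statement and is the technical heart of the paper (Sections~\ref{smallv}--\ref{fina}). Second, even granting your picture, ``$2+f_2(l)$ colors per torso glued along bounded adhesion'' does not give a bound close to $l$: the apex bound $f_2(l)$ coming out of the odd-minor structure theorem is enormous, and clique-sum gluing across adhesion sets of size up to $\alpha(l)$ does not preserve chromatic number in the way you suggest. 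So the outline, besides having the acknowledged central gap, also misrepresents the structure that is actually available.
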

This is an analogue of
Hadwiger's conjecture. In fact, it is easy to see that
Conjecture \ref{odd} immediately
implies Hadwiger's conjecture.
Again, Conjecture \ref{odd} is trivially true when $l=1,\, 2$. In fact,
when $l=2$,
 this means that
if a graph has no odd cycles, then it is $2$-colorable.

The first nontrivial case $l=3$  was proved by Catlin \cite{catlin}. Recently,
Guenin \cite{guenin} announced a solution of the  case $l=4$.
This result would imply  the Four Color Theorem because a graph having an odd $K_5$-minor
certainly contains a $K_5$-minor.
Conjecture \ref{odd} is open for  $l \geq 5$. The only general result is the following.

\begin{theorem}
Any graph with no odd $K_k$-minor is $O(k \sqrt{\log k})$-colorable.
\end{theorem}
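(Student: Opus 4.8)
\proof
The plan is to reduce the theorem to a single structural claim: \emph{if $G$ has no odd $K_t$-minor, then $V(G)$ can be partitioned into $V_1$ and $V_2$ so that neither $G[V_1]$ nor $G[V_2]$ has a $K_{ct}$-minor}, where $c$ is a suitable absolute constant. Granting this, color $G[V_1]$ with one palette and $G[V_2]$ with a disjoint one: by the Kostochka--Thomason theorem a graph with no $K_{ct}$-minor is $O(t\sqrt{\log t})$-degenerate, hence $O(t\sqrt{\log t})$-colorable (see \cite{kost1,thomason1}), and since every edge of $G$ lies inside a part or joins the parts we get $\chi(G)\le \chi(G[V_1])+\chi(G[V_2])=O(t\sqrt{\log t})$.

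To establish the structural claim I would work in one component at a time, fix a depth-first search tree $T$, and let $V_i$ be the set of vertices whose depth in $T$ has parity $i\in\{0,1\}$. The decisive feature of DFS is the absence of cross edges: every edge of $G$ joins an ancestor to a descendant of $T$. In particular each $G[V_i]$ uses only such ``back'' edges, while the depth-parity $2$-coloring of $V(G)$ restricts to a \emph{proper} $2$-coloring of every subtree of $T$ (the depth changes by $\pm1$ along a subtree). Suppose for contradiction that $G[V_1]$, say, has a $K_{ct}$-minor with branch sets $D_1,\dots,D_{ct}$, and replace each $D_a$ by the smallest subtree $\widehat D_a$ of $T$ that meets it. Then each $\widehat D_a$ is connected and properly $2$-colored by depth-parity, and since $D_a$ and $D_b$ are joined in $G[V_1]$ by an edge both of whose ends have even depth, $\widehat D_a$ and $\widehat D_b$ are joined by a \emph{monochromatic} edge. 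Hence any $t$ \emph{pairwise disjoint} of the $\widehat D_a$, together with the global depth-parity coloring, are exactly an odd $K_t$-minor of $G$.

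The only hard point is that the $\widehat D_a$ need not be pairwise disjoint, since the subtree spanning one branch set can run through another branch set or through vertices outside $V_1$. Here I would use that the $\widehat D_a$, being subtrees of the tree $T$, have a chordal (hence perfect) intersection graph: among the $ct$ of them there is either an independent set of size $\ge t$ --- that is, $t$ pairwise disjoint subtrees, completing the proof as above --- or a clique of any prescribed constant size, once $c$ is taken large enough. In the latter case the Helly property of subtrees forces the corresponding $\widehat D_a$ all to contain a common vertex $v\in V(T)$; since the $D_a$ are pairwise disjoint, $v$ lies in at most one of them, so all but one of these branch sets ``straddle'' $v$, meeting $T$ both strictly below and strictly above $v$ and joining the two sides through a back edge across $v$. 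I would then run an induction on $|V(G)|$ --- the statement being that a $K_{ct}$-minor in the even part of a DFS tree forces an odd $K_t$-minor in the whole graph --- invoking it on the subtree of $T$ rooted at $v$ with the part above $v$ suitably contracted, after checking that the straddling branch sets and the depth-parities along $T$ survive this passage and that the few exceptional vertices never obstruct the $2$-coloring. Verifying this descent is where essentially all the work lies; once it is in place every case of the argument yields an odd $K_t$-minor, contradicting the hypothesis, so the structural claim, and hence the theorem, follows.
\endproof
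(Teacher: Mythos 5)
The paper does not prove this theorem at all: it cites it to Geelen, Gerards, Reed, Seymour and Vetta \cite{geelen}, with a shorter proof in \cite{kkk}, so there is no ``paper proof'' to match your argument against. The known arguments establish a partition of $V(G)$ into a set inducing a \emph{bipartite} graph and a set inducing a $K_k$-minor-free graph, rather than your stronger-looking claim of two parts each $K_{ct}$-minor-free; your reduction is therefore a genuinely different route, and the DFS-plus-parity idea, the observation that monochromatic back-edges among same-parity vertices would assemble into an odd clique model, and the use of chordality and the Helly property of subtrees are all sensible and worth pursuing.

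However, as written there is a genuine gap, and you flag it yourself: the entire argument hinges on the inductive descent in the clique case, and you only gesture at it. Concretely, when $c$ of the subtrees $\widehat D_a$ share a vertex $v$ and the branch sets straddle $v$, it is not established that (a) the graph obtained by ``contracting the part above $v$'' is still odd-$K_t$-minor-free (contraction does not in general preserve absence of odd minors unless it is an odd-minor-operation, i.e.\ an edge cut, which a generic tree-contraction need not be), (b) the depth-parity $2$-coloring and the DFS property survive this passage in a way that lets the inductive hypothesis apply, or (c) an odd $K_t$-model in the contracted graph lifts back to one in $G$ with the original parities. None of these is routine, and without them the induction does not close, so the structural partition claim --- and with it the theorem --- remains unproven. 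Until that descent is actually carried out (or replaced by the bipartite-plus-$K_k$-minor-free decomposition argument of \cite{geelen,kkk}), the proposal is an incomplete sketch rather than a proof.
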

This is clearly a generalization of the above mentioned result by Kostochka and Thomason
\cite{kost1,thomason1} for the general case on Hadwiger's conjecture.
This is first proved by Geelen et al. \cite{geelen}.
A simpler and shorter proof is given in \cite{kkk}.

\subsection{Structure theorem}
\label{stodd}
\showlabel{stodd}

As mentioned above, the concept ``odd-minor'' is motivated by the theory of graph minors which  was developed by Robertson and Seymour
in a series of 23 papers published over more than thirty years.
The purpose of the series of papers is to prove the {\em graph minor
theorem\/}, which says that in any infinite collection of finite
graphs there is one that is a minor of another. As with other deep
results in mathematics, the body of theory developed for the proof
has also found applications not only in mathematics and but also in computer science. Yet many of these
applications rely on an auxiliary result which is central to the proof of the
graph minor theorem: a~result which approximately describes the structure of all graphs
$G$ which do not contain some fixed graph~$H$ as a minor, see \cite{RS16}.
At a high level, the theorem says that every such a graph has a tree-decomposition such that
each piece is
\begin{quote}
after deleting bounded number of vertices,
an ``almost'' embedded graph (for precise definition, see later) into a bounded-genus surface.
\end{quote}

Recently, similar structure results are obtained for graphs without some fixed graph~$H$ as an odd-minor \cite{DKM}.
Namely, every graph with no odd $H$-minor has a tree-decomposition such that
each piece is either
\begin{enumerate}
\item
after deleting bounded number of vertices,
an ``almost'' embedded graph into a bounded-genus surface, or
\item
after deleting bounded number of vertices, a bipartite graph.
\end{enumerate}

As we see here, in addition to the minor-free case, for the odd-minor-free case,
we only need to add the second conclusion for the decomposition theorem.
Our purpose in this paper is to use this structure theorem to attack the odd Hadwiger's conjecture.

\subsection{Results}

Let $G$ be a graph satisfying the following conditions:
\begin{enumerate}
\item[\rm (1)]
$G$ is $t$-chromatic.
\item[\rm (2)]
$G$ is minimal with respect to the odd-minor-relation in the class of all
$t$-chromatic graphs.
\item[\rm (3)]
$G$ does not contain $K_t$ as an odd minor.
\end{enumerate}
We call such a graph $G$ a {\it minimal counterexample to the odd Hadwiger's conjecture} for the case $t$.
Conjecture \ref{odd} suggests there are {\it no} such graphs.
Here {\it odd-minor-operation} means the following:
\begin{enumerate}
\item
First, delete edges.
\item
Then take a cut that consists of edges $R$.
Then contract ALL edges in $R$.
\end{enumerate}

It has been known that odd-minors are closed under the odd-minor-relations (see e.g., \cite{ge}).
We say $G$ has a \DEF{reduction} if there is an odd-minor-operation to $G$ such that any $(t-1)$-coloring of the resulting graph
can be extended to a $(t-1)$-coloring of $G$. Let us consider the following case: suppose there is a separation $(A,B)$ of $G$ such that
both $A-B\not=\emptyset$ and $B-A\not=\emptyset$.  Suppose furthermore that there is a $(t-1)$-coloring $\sigma$ of $A$. Then
we may reduce $A$ onto $A \cap B$ via odd-minor-operations so that the resulting graph of $B$ has a $(t-1)$-coloring that extends the coloring of $\sigma$ on $A \cap B$. If this happens, we also say that there is a reduction in $G$.
Hence if $G$ is a minimal counterexample to the odd Hadwiger's conjecture for the case $t$,
then there is no reduction in $G$.

The following result, which is closely related to the minimal counterexample to the odd Hadwiger's conjecture for the case $t$,
has been known from \cite{km}.

\begin{theorem}\label{main4}
For every fixed $k$, there is an algorithm with running time $O(n^3)$
for deciding either that
\begin{enumerate}
\myitemsep
\item[\rm (1)]
a given graph $G$ of order $n$ is $2496k$-colorable, or
\item[\rm (2)]
$G$ contains an odd-$K_{k}$-minor, or
\item[\rm (3)]
$G$ contains an odd minor $H$ of bounded size which does not contain
an odd-$K_k$-minor and has no $2496k$-coloring.
\end{enumerate}
\end{theorem}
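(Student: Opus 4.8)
\begin{idea}
The statement is from \cite{km}; the underlying plan is to run an algorithmic form of the odd-minor structure theorem and then either color the pieces it produces or return one small bad piece as a certificate. First I would invoke the algorithmic version of the structure theorem of \cite{DKM}: for every fixed $k$ there is an $O(n^{3})$-time algorithm that, given $G$, either exhibits an odd $K_{k}$-minor of $G$, which is conclusion (2), or returns a tree-decomposition $(T,\mathcal{Y})$ of $G$ whose adhesion is bounded by a function of $k$ and in which every torso is either (a) almost-embeddable in a surface of Euler genus at most $\gamma(k)$ after deleting at most $\alpha(k)$ apex vertices and finitely many vortices of depth at most $\alpha(k)$, or (b) has a set $X$ of at most $\beta(k)$ vertices such that the torso minus $X$ is bipartite. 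Such an algorithmic version is obtained in the standard way: odd-minor problems reduce to ordinary-minor problems on an auxiliary signed (double-cover) graph, to which the polynomial-time graph-minor structure algorithms descending from \cite{RS16} apply, and \cite{DKM} then rewrites the output into the two torso types above. Two things need checking here: that the whole computation fits into $O(n^{3})$ for fixed $k$, and that the torsos can be taken to be odd minors of $G$, so that a subgraph extracted from a torso is a genuine odd minor of $G$.

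Next I would attempt to build a $2496k$-coloring. A torso of type (b) has chromatic number at most $\beta(k)+2$; a torso of type (a) is colored by deleting its apices, applying the constructive Heawood bound on the fixed surface together with a bounded-color pass over the bounded-depth vortices, and then putting the apices back on fresh colors, which uses at most a function of $k$ colors. After the routine clean-up of the decomposition the adhesion, the genus $\gamma(k)$, the apex bound $\alpha(k)$ and the vortex depth are all bounded by functions of $k$, and the constant $2496$ is chosen so that $2496k$ dominates all the color counts just described; hence every torso can be $2496k$-colored. I would then assemble a $2496k$-coloring of $G$ by the usual leaf-to-root argument over $(T,\mathcal{Y})$: each adhesion set is a clique in both torsos containing it, so the color classes of a child torso can be permuted to agree with the parent on that clique, and the union of the permuted colorings is a proper $2496k$-coloring of $G$. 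This gives conclusion (1), and every step here is polynomial in $n$.

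The remaining case is a torso $P$ that cannot be $2496k$-colored; since $G$ has no odd $K_{k}$-minor, neither does $P$. Because $P$ is obtained from a graph near-embedded in the \emph{fixed} surface of Euler genus $\gamma(k)$ by adding at most $\alpha(k)$ apex vertices and finitely many vortices of depth at most $\alpha(k)$, and because $2496k\ge 6$, a Thomassen/Postle--Thomas-type analysis of highly critical graphs on a fixed surface, extended to absorb the bounded number of apices and the bounded-depth vortices, shows that $P$ contains a subgraph $H$ of size bounded by a function of $k$ alone that is still not $2496k$-colorable. Then $H$ is an odd minor of $G$ (it is a subgraph of a torso, and torsos are odd minors of $G$), it has no odd $K_{k}$-minor, being a subgraph of the odd-$K_{k}$-minor-free graph $P$, and it is not $2496k$-colorable; this is conclusion (3). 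The running time is dominated by the structure algorithm, hence $O(n^{3})$ for fixed $k$. I expect the main obstacle to be the combination of the two technical inputs flagged above: producing the structure theorem algorithmically with torsos that are odd minors of $G$ and with parameters explicit and small enough for the coloring steps (the arithmetic hiding behind the constant $2496$), and, what is genuinely delicate, the bounded-size extraction, which needs control of critical graphs that are almost-embeddable in a fixed surface (a surface plus a bounded number of apices and vortices) rather than merely embeddable in it.
\end{idea}
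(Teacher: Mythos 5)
The paper does not actually prove Theorem~\ref{main4}; it is quoted verbatim from~\cite{km}, and the text around it merely records that the result ``has been known from~\cite{km}.'' So there is no in-paper proof to compare your sketch against, and your proposal has to be judged on its own internal coherence and on whether it matches what~\cite{km} plausibly does.

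On that score I see one outright inconsistency and two substantial gaps. The inconsistency: you argue that ``the constant $2496$ is chosen so that $2496k$ dominates all the color counts just described; hence every torso can be $2496k$-colored,'' and then immediately open a new case, ``The remaining case is a torso $P$ that cannot be $2496k$-colored.'' These statements cannot both stand. If the structure-theorem parameters really are dominated by $2496k$, conclusion~(3) never fires, and the theorem degenerates to a dichotomy between~(1) and~(2); if they are not dominated by $2496k$ (which is the realistic situation, since the apex bound, vortex depth and genus in the odd-minor structure theorem grow with $k$ in a way that is not a priori linear, while the only general chromatic bound known is $O(k\sqrt{\log k})$), then your earlier assertion is simply false and the coloring attempt can genuinely fail. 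You need to commit to the second reading and restructure the argument accordingly.

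The first substantial gap is the bounded-size extraction. You invoke ``a Thomassen/Postle--Thomas-type analysis of highly critical graphs on a fixed surface, extended to absorb the bounded number of apices and the bounded-depth vortices.'' The finiteness of $k$-critical graphs on a fixed surface (for $k\ge 6$, say) is a theorem about graphs \emph{embedded} in a surface; its extension to graphs that are almost-embeddable (surface plus a bounded set of apex vertices plus bounded-depth vortices) is genuinely nontrivial and, to my knowledge, not something one can simply cite — adding even a single apex vertex to a surface class already destroys the Gallai/Thomassen-style discharging that those finiteness results rest on. Moreover you also need a bounded-size non-$2496k$-colorable certificate from the nearly-bipartite torsos, where no surface machinery applies at all. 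What~\cite{km} actually does is not a one-shot ``color everything, then extract a critical subgraph from a bad piece'' but an iterative peeling/reduction argument of the kind this paper itself uses in Sections~6--9: repeatedly detach a peripheral almost-embeddable or nearly-bipartite piece, show that any $2496k$-coloring of the rest extends across it (otherwise the piece is a certificate), reduce via odd-minor operations, and recurse; the bounded-size outcome in~(3) is what is left when the iteration can no longer make progress, not the output of a critical-graph theorem.

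The second gap is your parenthetical ``torsos are odd minors of $G$,'' which you flag but do not resolve. A torso adds edges to the adhesion sets, so it is not a subgraph of $G$, and it is a minor of $G$ only if each adhesion set can be completed to a clique by contracting connected subgraphs in the neighboring parts — and an \emph{odd} minor only if those contractions respect parity. The odd-minor structure theorem is engineered so that the parts hanging off an adhesion set can indeed be reduced onto it via odd-minor operations (this is exactly what Facts~\ref{easy2}, \ref{easy3} and Lemma~\ref{minimal} are for in the present paper), but you cannot take this for free: it is a property of the specific decomposition produced, not of tree-decompositions in general, and it is exactly what turns a bad \emph{piece} into a bad odd minor of $G$ as demanded by conclusion~(3).

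In short: the skeleton (algorithmic odd-minor structure theorem, color the pieces, glue, certify) is the right skeleton, but the proposal as written contradicts itself on whether coloring can fail, outsources the hardest step to a critical-graph finiteness statement that is not available for almost-embeddable graphs, and leaves the odd-minor status of the would-be certificate unestablished. The route in~\cite{km} sidesteps all three issues by iterating odd-minor reductions rather than coloring the whole decomposition at once.
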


Unfortunately this result does not give any information for minimal counterexamples to the odd Hadwiger's conjecture
for the case $t$.
Motivated by this fact, our purpose of this paper is to prove the following:

\begin{theorem}
\label{main1}
\showlabel{main1}
For a given graph $G$ and any fixed $t$, there is a polynomial time algorithm to
output one of the following:

\begin{enumerate}
\item
a $(t-1)$-coloring of $G$, or
\item
an odd $K_{t}$-minor of $G$, or
\item
after making all reductions to $G$, the resulting graph $H$ (which is an odd minor of $G$ and which has no reductions)
has a tree-decomposition $(T, Y)$ such that torso of each bag $Y_t$ is either
\begin{itemize}
\item
of size at most $f_1(t) \log n$ for some function $f_1$ of $t$, or
\item
a graph that has a vertex $X$ of order at most $f_2(t)$ for some function $f_2$ of $t$ such that $Y_t-X$ is bipartite.
Moreover,
degree of $t$ in $T$ is at most $f_3(t)$ for some function $f_3$ of $t$.
\end{itemize}
\end{enumerate}
\end{theorem}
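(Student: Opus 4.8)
The plan is to combine three ingredients: the structure theorem for odd-$H$-minor-free graphs from \cite{DKM}, the algorithmic framework of Theorem~\ref{main4} from \cite{km}, and a careful analysis of what a ``reduction'' can and cannot do. First I would apply the algorithm of Theorem~\ref{main4} with $k$ chosen as a suitable constant multiple of $t$: in polynomial time this either $(t-1)$-colors $G$ (after noting $2496k \le t-1$ is the wrong direction, so instead we use $k = t$ and get a $2496t$-coloring — which we then bootstrap below), outputs an odd $K_t$-minor (conclusion~2), or produces a bounded-size odd minor $H_0$ of $G$ with no odd $K_t$-minor and no $2496t$-coloring. In the third case we do not yet have what we want, so the real work begins.

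The next step is to exhaustively apply reductions. Starting from $G$ (or from $H_0$), whenever there is an odd-minor-operation whose output $H'$ has the property that every $(t-1)$-coloring of $H'$ extends to $G$, we replace the current graph by $H'$. Each reduction strictly decreases $|V| + |E|$, so this terminates in polynomially many rounds; the only thing to check algorithmically is that a single reduction step can be found in polynomial time. For this I would enumerate the candidate reductions coming from small separations $(A,B)$ with $|A\cap B| \le f_2(t)$ (as in the paragraph before Theorem~\ref{main4}): for each such separation one tests, by brute force over the $(t-1)^{|A\cap B|}$ boundary colorings, whether the ``small side'' can always be completed, and if so performs the odd-minor-operation collapsing it. Let $H$ be the graph with no reductions obtained at the end. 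By construction $H$ is an odd minor of $G$, and if $H$ has an odd $K_t$-minor we are again in conclusion~2 (lifting the odd $K_t$-minor back up through the odd-minor-operations); otherwise $H$ is odd-$K_t$-minor-free.

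Now I would invoke the structure theorem: since $H$ has no odd $K_t$-minor, it has a tree-decomposition $(T,Y)$ in which each torso is, after deleting at most $f_2(t)$ vertices, either almost-embeddable in a bounded-genus surface or bipartite. The point of having exhausted all reductions is that the almost-embeddable pieces must be \emph{small}: a large almost-embeddable torso contains a large grid-like structure across which one can always propagate a $4$-coloring (via the Four Color Theorem on the planar part plus a bounded apex/vortex correction), giving a reduction — contradiction. Quantitatively, one shows that an irreducible almost-embeddable torso has at most $f_1(t)\log n$ vertices; the $\log n$ (rather than a constant) is the genuine slack we must allow, and it arises because the reduction-propagation argument only works once the piece is polynomially, hence logarithmically in the exponent, large relative to the apex/vortex bounds. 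Similarly, irreducibility forces the decomposition tree to have bounded degree at each node — a node of large degree would have many parallel ``branches'' hanging off a bounded-size adhesion set, and all but a bounded number of them could be reduced. Assembling these observations gives exactly conclusion~3.

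The main obstacle, and the step I expect to be most delicate, is proving that irreducibility genuinely caps the size of an almost-embeddable torso at $O(\log n)$: one must show that from a sufficiently large bounded-genus almost-embedded piece — with its bounded number of apex vertices and bounded-width vortices — one can \emph{constructively and in polynomial time} exhibit a separation together with a proof that every boundary $(t-1)$-coloring extends across it. This requires a quantitative, algorithmic version of the ``locally planar graphs are $4$- (or $5$-) colorable with prescribed outer colors'' phenomenon, robust enough to tolerate the apices and vortices, and it is here that the bound $f_1(t)\log n$ (as opposed to an absolute constant) is forced. Handling the interaction between the reductions and the surface structure — in particular ensuring a reduction in one torso does not destroy irreducibility elsewhere — is the part that needs the most care.
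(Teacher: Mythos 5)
Your high-level plan -- invoke the odd-minor structure theorem, exhaust reductions, and argue that irreducibility forces the non-bipartite torsos to have tree-width $O(\log n)$ -- matches the paper's skeleton, but there are several concrete gaps, two of which are serious.

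First, the claim that one can find reductions ``by brute force over the $(t-1)^{|A\cap B|}$ boundary colorings, whether the small side can always be completed'' is not algorithmically sound as stated: even after fixing a boundary coloring, deciding whether it extends to the small side is a coloring-extension problem, NP-hard in general, and the small side is not of bounded size. The paper never enumerates candidate reductions this way. Instead it identifies specific large-scale structural situations (many far-apart odd faces, many elements of $\W$ with the same neighborhood in the apex set $Z$, etc.) in which a reduction is \emph{guaranteed} to exist and is \emph{exhibited explicitly} via odd-minor-operations; the algorithm alternates between applying the structure theorem and executing these specific, certified reductions. Your framing presupposes a reduction-detection oracle you do not have.

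Second, your key step -- that a large almost-embeddable torso with no reduction ``contains a large grid-like structure across which one can always propagate a $4$-coloring (via the Four Color Theorem on the planar part plus a bounded apex/vortex correction)'' -- misses the central technical difficulty. The surface part $G_0$ can contain vertices with up to $t-6$ (or $t-5$ in the sphere case) neighbors in the apex set $Z$, leaving only a handful of available colors. You cannot simply $4$-color the planar part and then fix things up: the paper must (i) locate the set $U$ of high-$Z$-degree vertices and show that if many of them are far apart one can either build an odd $K_t$-model or perform a reduction (Lemmas~\ref{t5}, \ref{t51}, \ref{t52}); (ii) similarly handle the non-bipartite small vortices $\R$ (Lemma~\ref{odd1}) using the homoplasty/parity machinery of \cite{RS7} to realize the \emph{odd} clique model; and (iii) only after those obstructions are confined to boundedly many disks, deduce (via Thomassen's $5$-list-coloring with precolored boundary, Theorem~\ref{thm:Th5}, and not the Four Color Theorem) that the far interior can always be colored and hence reduced. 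The parity aspects -- constructing odd (not just ordinary) $K_t$-models -- are not an afterthought; they drive the choice of distance thresholds, the use of $Y$-forests from Theorem~\ref{gm7}, and the need to track odd faces and odd cycles in $\W$.

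Smaller issues: the opening invocation of Theorem~\ref{main4} from \cite{km} does not help -- the paper explicitly states that result gives no information about minimal counterexamples, and you concede the $2496t$ bound goes the wrong way without saying how to ``bootstrap'' it. The $\log n$ does not arise from a vague ``polynomial vs.\ logarithmic slack'' heuristic but from a precise fact about planar graphs (Eppstein's theorem: if every vertex is within face-distance $\log n$ of a fixed face boundary, tree-width is $O(\log n)$), applied through Lemma~\ref{cru} to the planarizing subgraph $W'$ and the disks covering the problematic regions. Finally, the bounded-degree claim for nearly bipartite bags needs the pigeonhole-over-$2^{|Z|}$ argument of Lemma~\ref{re4}, together with a careful odd-minor reduction of the apex set to a clique; ``all but a bounded number of branches could be reduced'' gestures at this but does not supply it.
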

Let us observe that the last odd minor $H$ is indeed a minimal counterexample to the odd Hadwiger's conjecture for the case $t$.
So we will show that any minimal counterexample to the odd Hadwiger's conjecture for the case $t$ satisfies the structure as in the third conclusion.
Here,
\DEF{torso} of a bag means that we can add all missing edges to $Y_t \cap Y_{t'}$ for all $tt' \in T$, but
then the resulting bags of the tree-decomposition still satisfy the third conclusion of Theorem \ref{main1}.

Let us point out that in order to obtain our theorem, we have to deal with the following structure:
\begin{quote}
$G$ has a vertex set of order at most $t-5$ such that $G-X$ is planar.
\end{quote}
Indeed, this structure appears in our proof. It follows from the Four Color Theorem that $G$ is $(t-1)$-colorable.

If we only obtain the first or the second conclusion of Theorem \ref{main1}, we can ``decide'' Conjecture \ref{odd} for the case $t$. However, if we obtain the third conclusion, then it is not clear
how to decide Conjecture \ref{odd} for the case $t$.  Indeed, suppose $G$ has a vertex $X$ of order at most $f_2(t)$ for some function $f_2$ of $t$ such that $G-X$ is bipartite. Then it is not so difficult to figure out whether or not
$G$ has an odd $K_t$-minor. (Essentially, we can reduce to the minor-testing algorithm \cite{RS13}. Let us give a
sketch of the proof.
If there is an odd $K_t$-minor in this graph $G$, then there are only constantly many possibilities for $X$ to be participated
in this odd $K_t$-minor. Hence we just need to figure out the existence of a ''label'' minor in $(G-X) \cup X'$
with respect to specified vertices in $X' \subseteq X$.)

But it is not clear if $G$ is $(t-1)$-colorable. Suppose $G-X$ has a bipartition $(A,B)$.
Then we can figure out the chromatic number of $X \cup A$ (and $X \cup B$).
This implies that we can color $G$ with at most $\chi(G)+1$ colors, where $\chi(G)$ is the chromatic number of $G$.
However, if the chromatic number of $X \cup A$ or $X \cup B$ is $t-1$, it is not clear if $G$ is $(t-1)$-colorable.

This is in contrast with Hadwiger's conjecture, which is decidable for the case $t$, as proved
by Robertson and Seymour (private communication, see also \cite{kt}), as follows:
\begin{theorem}
\label{main3}
\showlabel{main3}
For a given graph $G$ and any fixed $t$, there is a polynomial time algorithm to
output one of the following:

\begin{enumerate}
\item
a $(t-1)$-coloring of $G$, or
\item
a $K_{t}$-minor of $G$, or
\item
a minor $H$ of $G$ of tree-width at most $f(t)$ such that
$H$ does not have a $K_{t}$-minor nor is $(t-1)$-colorable.
\end{enumerate}
\end{theorem}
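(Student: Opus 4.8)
\medskip
\noindent\textbf{Proof idea.}
The plan is to combine the polynomial‑time minor‑testing algorithm of Robertson and Seymour~\cite{RS13} with the graph‑minor structure theory (specifically the Flat Wall Theorem) and the Four Color Theorem, so that every sufficiently large $K_t$‑minor‑free input can be stepwise ``reduced'' until its tree‑width is bounded in terms of $t$. First I would test whether $G$ has a $K_t$‑minor; if so we output conclusion~(2). So assume from now on that $G$ has no $K_t$‑minor.

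The heart of the argument is the claim that \emph{there is a function $g$ so that every $K_t$‑minor‑free graph of tree‑width more than $g(t)$ admits a reduction}, where a \emph{reduction} is a sequence of edge/vertex deletions and edge contractions producing a proper minor $G'$ that is still $K_t$‑minor‑free and such that every $(t-1)$‑coloring of $G'$ extends to a $(t-1)$‑coloring of $G$. To establish the claim, observe that if $\tw(G)>g(t)$ then, by the Flat Wall Theorem, $G$ contains a large \emph{flat} wall: a subdivided wall drawn in a disc $\Delta$ whose interior is attached to the rest of $G$ only through a set $Z$ of bounded size and is otherwise ``flatly'' embedded. After deleting $Z$, the part of $G$ inside $\Delta$ is planar, hence $4$‑colorable by the Four Color Theorem; since only a bounded neighborhood of $Z$ inside $\Delta$ can ever participate in a $K_t$‑minor or genuinely constrain an extension of a given coloring of $Z$, the remainder of the flat disc can be deleted or contracted down without creating a $K_t$‑minor and without changing $(t-1)$‑colorability. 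That is the reduction.

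Granting the claim, the algorithm iterates it: as long as the current graph $H$ (a $K_t$‑minor‑free minor of $G$) has tree‑width exceeding $g(t)$, locate a flat wall in polynomial time via the constructive versions of this machinery and apply the reduction; each step strictly decreases $|V(H)|+|E(H)|$, so after polynomially many steps we obtain a minor $H$ of $G$ with no $K_t$‑minor and $\tw(H)\le g(t)=:f(t)$. Since $\tw(H)$ is bounded, compute $\chi(H)$ exactly in polynomial time by dynamic programming over a width‑$f(t)$ tree‑decomposition (test $k$‑colorability for $k=1,\dots,t-1$). If $\chi(H)\le t-1$, propagate the coloring back through the reductions in reverse to obtain a $(t-1)$‑coloring of $G$ --- conclusion~(1). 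Otherwise $\chi(H)\ge t$, and $H$ is a minor of $G$ of tree‑width at most $f(t)$ that has neither a $K_t$‑minor nor a $(t-1)$‑coloring --- conclusion~(3). \textbf{The main obstacle} is proving that a large flat wall always yields a valid reduction: the reduction must be arranged so as to simultaneously (i) avoid creating a $K_t$‑minor, (ii) preserve the property that a $(t-1)$‑coloring lifts, and (iii) strictly shrink the graph, and the delicate point is controlling the interaction between the planar flat disc and its bounded interface $Z$ --- it is exactly here that the Four Color Theorem is used, to recolor the planar interior once the finitely many behaviors at $Z$ have been fixed. (One can alternatively see non‑constructively, via the Graph Minor Theorem, that the minor‑minimal $t$‑chromatic $K_t$‑minor‑free graphs form a finite antichain and hence have bounded tree‑width; but that argument does not yield the algorithm.)
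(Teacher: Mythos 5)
The paper does not actually prove Theorem~\ref{main3}: it is attributed to Robertson and Seymour (private communication) with pointers to \cite{kt} and, in the bibliography, \cite{krhadwiger}. The paper only observes in passing that its own machinery (built for the odd case and yielding Theorem~\ref{main1}) gives a weaker version with the tree-width bound $f(t)\log n$ in place of $f(t)$.

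Your high-level plan --- test for a $K_t$-minor, then repeatedly locate a flat wall and perform a reduction, then finish with dynamic programming on a bounded-width tree-decomposition --- is the right skeleton and matches the known line of attack. But the central claim, that a large flat wall always yields a valid reduction, is asserted rather than proved, and the specific justification you offer does not go through as written. First, the Four Color Theorem is the wrong coloring tool here. You do not need to $4$-color the planar interior of the flat disc from scratch; you need to \emph{extend} an already-fixed $(t-1)$-coloring of the apex set $Z$ \emph{and} of the vertices on the boundary of the disc to the interior. A $4$-coloring of the interior need not agree with such a precoloring, and in fact extending precolorings in planar graphs requires a list-coloring argument in the spirit of Thomassen's theorem (Theorem~\ref{thm:Th5} in the paper). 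That in turn forces you to establish a quantitative fact you do not address: that a vertex deep inside the wall, far from all ``features'' (apices, vortices, the cuff bounding the disc), has at most $t-6$ neighbors in $Z$, so that after $Z$ is colored it still has at least $5$ admissible colors. Second, the phrase ``only a bounded neighborhood of $Z$ inside $\Delta$ can \dots genuinely constrain an extension of a given coloring of $Z$'' misidentifies the interface: the boundary cycle of $\Delta$ can be arbitrarily long, and interior vertices near that boundary are also constrained by vertices outside $\Delta$ that are not in $Z$. The correct statement is about vertices that are far (in the embedding metric) from \emph{every} structural feature, not about a bounded neighborhood of $Z$. This is precisely the content of the reduction step the paper carries out for Theorem~\ref{main1} (see the Overview around the statements (1) and (2), and the use of Theorems~\ref{thm:Th5} and \ref{gm7}), and it is the step you flag as ``the main obstacle'' without closing it. Until that lemma is actually proved --- including why a $(t-1)$-coloring of the reduced graph lifts back and why no $K_t$-minor is created --- the proposal remains a plan rather than a proof.

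One smaller point: your non-constructive aside, that minor-minimal $t$-chromatic $K_t$-minor-free graphs form a well-quasi-ordered (hence finite) antichain and so have bounded tree-width, is a nice observation, but as you note it does not yield the algorithm and does not by itself give conclusions (1)--(3) for a general input $G$.
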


The last conclusion implies that $H$ is a counterexample to Hadwiger's conjecture of tree-width
at most $f(t)$ for the case $t$. So we can ``decide'' Hadwiger's conjecture for the case $t$.

Concerning Conjecture \ref{odd}, we can only give the following conclusion from Theorem \ref{main1}.

\begin{theorem}
\label{main2}
\showlabel{main2}
For a given graph $G$ and any fixed $t$, there is a polynomial time algorithm to
output one of the following:

\begin{enumerate}
\item
a $(t-1)$-coloring of $G$, or
\item
an odd $K_{t}$-minor of $G$, or
\item
after making all reductions to $G$, we can color the resulting graph $H$ (which is an odd minor of $G$ and which has no reductions) with at most $\chi(H)+1$ colors
in polynomial time.
\end{enumerate}
In the last conclusion, we can actually figure out whether or not
$H$ contains an odd $K_t$-minor.
\end{theorem}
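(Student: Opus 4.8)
The plan is to obtain Theorem~\ref{main2} as a consequence of Theorem~\ref{main1}. First I would run the algorithm of Theorem~\ref{main1} on $G$: if it returns a $(t-1)$-coloring or an odd $K_t$-minor we output the same object, so assume it returns the third outcome, namely a graph $H$ (an odd minor of $G$ with no reductions) with a tree-decomposition $(T,Y)$ each of whose torsos $\hat Y_s$ is either of size at most $f_1(t)\log n$, or contains a set $X_s$ with $|X_s|\le f_2(t)$ such that $\hat Y_s-X_s$ is bipartite and $s$ has degree at most $f_3(t)$ in $T$. Two tasks remain, to be done in polynomial time: (a) color $H$ properly with at most $\chi(H)+1$ colors, and (b) decide whether $H$ has an odd $K_t$-minor.

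For (a) I would color each torso almost optimally and glue the colorings along $T$. After a routine tidying of the decomposition we may assume every adhesion $Y_s\cap Y_{s'}$ is a clique in both incident torsos --- so that $H$ is a spanning subgraph of the clique-sum $H'$ of the torsos --- and that each torso $\hat Y_s$ is a minor of $H$; then $\chi(\hat Y_s)\le\chi(H)$ for all $s$ and $\chi(H)\le\chi(H')=\max_s\chi(\hat Y_s)$, so in fact $\chi(H)=\max_s\chi(\hat Y_s)$. For a torso of the first type, $|\hat Y_s|=O(\log n)$, so we compute $\chi(\hat Y_s)$ together with an optimal coloring in time $2^{O(|\hat Y_s|)}=n^{O(f_1(t))}$ using the standard exact algorithm for chromatic number. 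For a torso of the second type, fix any $2$-coloring $(A,B)$ of $\hat Y_s-X_s$; since $A$ is independent and $|X_s|$ is bounded, the value $\chi(\hat Y_s[X_s\cup A])=\max\bigl(\chi(\hat Y_s[X_s]),\,1+\max_{a\in A}|N(a)\cap X_s|\bigr)$ is computable in polynomial time (enumerate all colorings of the bounded set $X_s$), and likewise for $B$; writing $\ell_s$ for the larger of the two we get $\ell_s\le\chi(\hat Y_s)\le\ell_s+1$, and a coloring of $\hat Y_s$ with $\ell_s+1$ colors is produced by coloring $X_s\cup A$ with $\ell_s$ colors and giving all of $B$ one fresh color. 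Let $C:=\max_s c_s$ where $c_s$ is the number of colors used on $\hat Y_s$; then $C\le\chi(H)+1$, and we regard all per-torso colorings as using the palette $\{1,\dots,C\}$. Finally, root $T$, keep the root coloring, and process the remaining nodes in breadth-first order: at a node $s$ with parent $p$, permute the precomputed coloring of $\hat Y_s$ so that it agrees with the already-fixed coloring on $Y_p\cap Y_s$ --- possible because that set is a clique, hence receives $|Y_p\cap Y_s|\le c_s$ distinct colors on each side. Since $|T|=O(n)$ this runs in polynomial time and yields a proper coloring of $H$ with at most $C\le\chi(H)+1$ colors.

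For (b), note that if $H$ has an odd $K_t$-minor then, since $H$ is an odd minor of $G$, this is also an odd $K_t$-minor of $G$, so it suffices to decide existence. As $K_t$ is $2$-connected, the model of an odd $K_t$-minor is connected, and I would detect one by dynamic programming over $(T,Y)$; after binarizing $T$ (a standard operation that keeps every adhesion of size $O(\log n)$ and preserves the two torso types) the state at an adhesion --- which branch set each of its vertices lies in, the two-coloring class of that vertex, and which pairs of branch sets still need their monochromatic linking edge --- ranges over a set of polynomial size. The table of a small torso is then computed by brute force over its $n^{O(1)}$ relevant internal configurations, and the table of a bipartite-plus-$X_s$ torso is computed by the reduction sketched after Theorem~\ref{main3}: only boundedly many configurations of $X_s$ arise, and for each the remaining question is the existence of a labeled $K_t$-minor in $(\hat Y_s-X_s)\cup X_s'$ with prescribed roots, decidable in polynomial time by the minor-testing algorithm \cite{RS13}. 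Propagating these tables over $T$ in polynomial time decides whether $H$ contains an odd $K_t$-minor.

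I expect the coloring step, and in particular the reason why only a $\chi(H)+1$ guarantee is possible, to be the crux. For a bipartite-plus-$X_s$ torso, testing $\chi(\hat Y_s)$-colorability amounts to a list-coloring problem on the bipartite graph $\hat Y_s-X_s$ (the list of a vertex being $\{1,\dots,c\}$ minus the colors of its neighbors in $X_s$), which is NP-hard in general; this is exactly the obstruction described in the discussion preceding Theorem~\ref{main3}, and it is why Theorem~\ref{main2} stops short of the decidability conclusion of Theorem~\ref{main3}. The only other delicate point --- ensuring that gluing the per-torso colorings costs no additional color --- is taken care of by the clique-sum structure of the adhesions, as above.
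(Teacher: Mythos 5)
Your proof of part (b) is in the same spirit as the paper's (dynamic programming over the tree-decomposition, with a labeled-minor test inside the near-bipartite torsos), so I will focus on part (a), where you take a genuinely different route from the paper and where there is a gap.

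The paper does \emph{not} color torso-by-torso. Instead, for each near-bipartite bag $Y_t$ it enlarges $X$ by the (boundedly many, by the degree bound) adhesion vertices to children so that afterwards the bipartite remainder $(A_{Y_t},B_{Y_t})$ is attached only to $X$, deletes the part $B_{Y_t}$, observes that the resulting graph $H'$ (a \emph{subgraph} of $H$) has tree-width $O(\log n)$, colors it with $\chi(H')$ colors by the standard DP, and then uses a single fresh color for $\bigcup_t B_{Y_t}$, which it shows is an independent set. This gives $\chi(H')+1\le\chi(H)+1$ colors, with the inequality automatic because $H'\subseteq H$.

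Your route instead colors each torso $\hat Y_s$ ``almost optimally'' and glues. The crux of your count is the assertion $\chi(\hat Y_s)\le\chi(H)$, which you justify by saying that after ``routine tidying'' each torso is a minor of $H$. This step fails for two independent reasons. First, making every adhesion a clique \emph{in the torsos} is trivial (it is the definition of a torso), but it does not make the adhesion a clique \emph{in $H$}; and there is no ``routine tidying'' that turns a given tree-decomposition into one in which all torsos are minors while preserving the very specific torso structure (size $O(\log n)$, or bipartite after removing a bounded set with bounded degree in $T$) that Theorem~\ref{main1} gives you. Second, and more fundamentally, even if the torsos \emph{were} minors of $H$, the conclusion $\chi(\hat Y_s)\le\chi(H)$ would not follow, since chromatic number is not minor-monotone (e.g.\ $K_3$ is a minor of the bipartite $C_6$). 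What \emph{is} true is that if the adhesions were cliques in $H$ itself then the bags would be induced subgraphs and $\chi(H)=\max_s\chi(H[Y_s])$ would hold; but the paper's decomposition gives no such guarantee, and a concrete counterexample is $H=K_{2,2}$ with a two-bag tree-decomposition over a non-edge adhesion, whose torsos are triangles with $\chi=3>2=\chi(H)$. Consequently the bound $C\le\chi(H)+1$ at the end of your step (a) is not established. The paper sidesteps this entirely by coloring a subgraph of $H$ and then accounting for the deleted vertices by a single extra color via independence, which is a cleaner argument and is the one you would want to reproduce.
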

\begin{proof}
We only need to deal with 3 of Theorem \ref{main1}. For each bag $Y_t$ that is ``nearly'' bipartite (i.e., the second outcome),
we take a vertex set $X'$ in $Y_t-X$ that is attached to a children bag. So $|X'| \leq f_3(t)$. We add the vertices in $X'$ to $X$.
Hence after this modifications, $Y_t-X$ is a bipartite graph $(A_{Y_t}, B_{Y_t})$
such that children bags of $Y_t$ are attached only to $X$. We delete $B_{Y_t}$ from $Y_t$. Let $Y''_t$ be the resulting bag.
Then $Y''_t$ consists of a tree-decomposition of width $f_2(t)+f_3(t)+1$
whose abstract tree is a star.
Let $Y'_t$ be the center bag of this tree-decomposition.
Let $\Y_2=\bigcup Y''_t$.

Let $\Y_1 =\bigcup Y_t$ where $Y_t$ is not nearly bipartite (hence $Y_t$ is not a bag that has $Y''_t$ in $\Y_2$). Let $\mathcal{Y'}=\Y_1 \cup \Y_2$.
Since each bag $Y''_t$ in $\Y_2$ is of tree-width
at most $f_2(t)+f_3(t)+1$ with the center bag $Y'_t$ and
children bags of $Y_t$ are attached only to $X$,
$\Y'$ induces a graph of tree-width at most  $f_1(t) \log n$ for some function $f_1$ of $t$ (with $f_1(t) \geq f_2(t)+f_3(t)+1$).

Thus we can construct a tree-decomposition of the graph $H'$ induced by $\mathcal{Y'}$  in polynomial time (see Theorem \ref{tree1} below) and then color the graph using the standard dynamic programming approach in polynomial time (see Theorem \ref{twcolor} below).
This allows us to color the graph $H'$ with $\chi(H')$ colors.

Now for each $Y_t$ having $(A_{Y_t},B_{Y_t})$ (i.e., $Y_t$ is ``nearly'' bipartite), we consider $B_{Y_t}$.
By our construction,
for any $t, t' \in T$ with $t \not=t'$, $B_{Y_t}$ has no neighbors
in $B_{Y_{t'}}$. Hence we need only one color for $\bigcup B_{Y_t}$.
So by adding $\bigcup B_{Y_t}$ (with one color) to $H'$ (with the coloring
using $\chi(H')$ colors), we obtain a $(\chi(H)+1)$-coloring of $H$,
as claimed in the third conclusion.

To figure out whether or not $H$ has an odd $K_t$-minor,
we can actually use the dynamic programming approach.
Indeed we can figure out whether or not $H'$ contains
an odd $K_t$-minor in polynomial time by the standard dynamic programming approach
(see Theorem \ref{oddparity} below). We then use this dynamic programming
information
to update the information for
each $Y_t$ having $(A_{Y_t},B_{Y_t})$. To do so, we just point out that
it is not hard to figure out whether or not $Y_t$ has an odd $K_t$-minor
by using the usual minor testing \cite{kkr,RS13}, see \cite{parity} (since $Y_t$ is nearly bipartite).
We have already gave a sketch of the proof for testing whether or not a nearly bipartite graph has an odd $K_t$-minor.

This allows us to update the dynamic programming information
for $Y_t$. The rest is the same as the standard dynamic programming
for the graphs of small tree-width (i.e., Theorem \ref{oddparity} below).
We omit the proof.
\end{proof}

Let us point out that our proof would be much simpler if we only consider Hadwiger's conjecture.
In this case, we only obtain the first structure of 3 in Theorem \ref{main1}, and
the large part of the proof in this paper would be gone. So our proof implies Theorem \ref{main3} with the bound on
tree-width $f(t)$ replaced by $f(t)\log n$, which is still enough to decide Hadwiger's conjecture for the case $t$ (because the standard dynamic programming approach (see Theorem \ref{twcolor} below) can handle graph coloring and minor testing for
a graph of tree-width $O(\log n)$).
This may be of independent interest.

It remains to show Theorem \ref{main1}.

\section{Overview of our main result}

We sketch a proof of Theorem \ref{main1}.

We may assume that
a given graph $G$ is a minimal counterexample to the odd Hadwiger's conjecture
for the case $t \geq 6$. For otherwise, we can perform a reduction. Then we apply the whole argument to the resulting graph.
So we now assume that there is no more reduction for $G$.

We first apply the above mentioned structure theorem for odd $K_t$-minor-free graphs \cite{DKM}.
Namely, every graph with no odd $K_t$-minor has a tree-decomposition $(T,Y)$ such that
each piece $Y_t$ is either
\begin{enumerate}
\item
after deleting bounded number $\alpha=\alpha(t)$ of vertices $Z_t$,
$Y_t-Z_t$ can be embedded in a surface $\Sigma$ of Euler genus $g$, up to 3-separations, with
at most $\alpha$ vortices $\V$ (c.f., $\alpha$-nearly embedded, see Section \ref{structthm} for more details), or
\item
after deleting bounded number $\alpha$ of vertices $Z_t$, $Y_t-Z_t$ is a bipartite graph.
\end{enumerate}

We can actually construct such a tree-decomposition $(T,Y)$ in polynomial time \cite{DKM}. If tree-width of $G$ is at most $f(\alpha) \log n$ (for some function of $\alpha$),
we are done. So assume that this is not the case.
%

There are two cases we have to consider:\par

\medskip

\DEF{Case 1.} There is a bag $Y_t$ which is nearly bipartite (i.e., the second).\par

\medskip

We can confirm that the number of children bags of $Y_t$ is bounded in terms of $\alpha$ (otherwise, we can perform a ``reduction'' of $G$, which implies that $G$ is no longer a minimal counterexample to the odd Hadwiger's conjecture for the case $t$, a contradiction). This indeed confirms the degree condition
on $Y_t$ in the second conclusion of 3 in Theorem \ref{main1}.\par

\medskip

\DEF{Case 2.} There is a  a bag $Y_t$ that is $\alpha$-nearly embedded (i.e., the first) .\par

\medskip

Let $G_0$ be the surface part of $Y_t-Z_t$.
Since $Y_t-Z_t$ is $\alpha$-nearly embedded, there may be some components $\W$ attached to faces of $G_0$, up to 3-separations.

We now divide $\W$ into two sets $\W_1$ and $\W_2$ such that
$\W_1$ consists of induced bipartite graphs, and $\W_2$ consists of
non bipartite graphs. Note that for each bipartite graph $W' \in \W_1$, $W'$ has to have at least $t-5$ neighbors in $Z_t$, for otherwise,
any $(t-1)$-coloring of $G-(W'-G_0)$ can be easily extended to a $(t-1)$-coloring of the whole graph $G$ (so we have a reduction, a contradiction).

It turns out that there are bounded number (say $g_1(\alpha)$) of disks $\D_1$ of bounded radius that cover all the components in $\W_2$ (otherwise, we can make a reduction of $G$, a contradiction).

We take a vertex set $U$ in $G_0$ such that each vertex in $U$ has at least $t-6$ neighbors in $Z_t$ (at least $t-5$ neighbors in $Z_t$ when $\Sigma$ is sphere).
We also take a set of odd faces $\O$ in $G_0$.

Let us assume the following(*):

Suppose there are at least $l \geq g_2(\alpha)$ faces in $G_0$ that
are pairwise ``far apart'',
such that each face either
\begin{enumerate}
\item
contains a vertex in $U$ or,
\item
accommodates a component in $\W_1$.
\end{enumerate}

\medskip

In this case, we can show that
there are bounded number (say $g_2(\alpha)$) of disks $\D_2$ of bounded radius that cover all odd faces in $\O$ in $G_0$. So in this case,
after deleting disks in $\D_1 \cup D_2$ and disks $\D_V$ that accommodate
vortices, the resulting graph $S$ in $G_0$ has no odd face.

With a little more work, we can, indeed, show that the resulting graph $S$ is bipartite (otherwise there is an odd $K_t$-minor).  Using this fact, we try to give an essentially 2-coloring of $S$, that
extends a $(t-1)$-coloring of the whole graph minus $S$. This is possible if we are allowed to precolor vertices that are very close to disks we deleted. More precisely,
we shall show that
\begin{quote}[(1)]
if there are vertices $R$ in the surface part of $G_0$ that are face-distance at least $f'(\alpha) \log n$ from all the disks we deleted,
for some function $f'$ of $\alpha$, then we can delete these vertices $R$ safely so that we can make a reduction (a contradiction).
More precisely, no matter how we color $G-R$ with $(t-1)$-colors, we can extend this coloring to a $(t-1)$-coloring of $R$ (hence a $(t-1)$-coloring of
the whole graph $G$, a contradiction).
\end{quote}

From this fact, with some work (using the result of Epstein \cite{epp1} which says that, for a planar graph $G$ with the outer face boundary $C$,
if there is no face that is of face-distance at least $\log n$, then
tree-width of $G$ is at most $6\log n$),
we can show the following:
\begin{quote}[(2)]
If tree-width of $Y_t$ is at least $f(t) \log n$, we can
perform a ``reduction'' for $G$ (i.e., $G$ is no longer a minimal counterexample to the odd Hadwiger's conjecture for the case $t$, a contradiction).
\end{quote}

Note that $\alpha$ depends on $t$. 
So it remains to consider the case when the assumption (*) is not satisfied. In this case, there are at most $l$ disks $\D_3$
that cover all the vertices in $U$ and all the components in $\W_1$.
So after deleting disks in $\D_1 \cup \D_3$ and disks $\D_V$ that accommodate
vortices, the resulting graph $S$ in $G_0$ is 2-cell embedded in a surface.
Using this fact, we try to give a 5-coloring $S$, that
extends a $(t-1)$-coloring of the whole graph minus $S$.
This is possible if we are allowed to precolor vertices that are very close to disks we deleted.
We also obtain the above (1) and (2) for this case too. The point here is that in $R$ every vertex has at most $t-7$ neighbors in $Z_t$ ($t-6$ neighbors in $Z_t$ if $\Sigma$ is sphere). This allows us to use the recent list coloring results for graphs in surfaces \cite{5list1,5list2,Th1}.

At the moment, either we can make a reduction (a contradiction) or we obtain the structure as in Theorem \ref{main1}.

\medskip


In the next section, we shall give some definitions.

\section{Preliminaries for the rest of the paper}

In this paper, $n$ and $m$ always mean the number of vertices of a given graph and
the number of edges of a given graph, respectively.
With a sloppy notation, for a graph $G_1$ and for a vertex set $Q$, we use $G_1 \cap Q$ to be the vertex of
the intersection.

We now look at definitions of the tree-width and the clique model.

\paragraph{Tree-width}

Let $G$ be a graph, $T$ a tree and let $Y = \{ Y_t
\subseteq V(G) \mid t \in V(T)\}$ be a family of vertex sets
$Y_{t}\subseteq V(G)$ indexed by the vertices $t$ of $T$. The pair
$(T,Y)$ is called a {\em tree-decomposition} of $G$ if it
satisfies the following three conditions:
\begin{itemize}
\item $V(G)=\bigcup_{t\in T}Y_{t}$,
\item for every edge $e\in E(G)$ there exists a $t\in T$ such that both ends of $e$ lie in $Y_{t}$,
\item
if $t, t', t'' \in V(T)$ and $t'$ lies on the path of $T$ between $t$ and $t''$,
then $Y_t \cap Y_{t''} \subseteq Y_{t'}$.
\end{itemize}
The {\em width} of $(T,Y)$ is the number $\max\{|Y_{t}|-1 \mid t\in T\}$ and the
{\em tree-width} ${\rm tw}(G)$ of $G$ is the minimum width of any tree-decomposition of $G$.
Sometime, we refer $Y_t$ to as a {\em bag}.

Robertson and Seymour developed  the first polynomial time algorithm for
constructing a tree decomposition of a graph of bounded tree-width \cite{RS13},
and eventually
came up with an $O(n^2)$ time algorithm for this problem.
Reed
\cite{rtree} developed an algorithm for the problem which runs in $O(n \log n)$ time, and then
Bodlaender \cite{bod} developed a linear time algorithm.
This algorithm was further improved in \cite{tree}.

\begin{theorem}
\label{tree1}
For an integer $w$,
there exists a $(2^{O(w)}) n^{O(1)}$ time algorithm that,
given a graph $G$,
either finds a tree-decomposition of $G$ of width $w$ or
concludes that the tree-width of $G$ is more than $w$.
Furthermore, if $w$ is fixed, there exists an $O(n)$ time algorithm to construct one of them.
\end{theorem}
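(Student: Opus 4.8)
The plan is to prove Theorem~\ref{tree1} in two phases --- first an $O(w)$-factor approximation of treewidth, then an exact dynamic program run on top of the approximate decomposition --- and then to observe that fixing $w$ turns every ``function of $w$'' occurring in the analysis into an absolute constant, so that the linear-in-$n$ running time is inherited. Throughout, the heavy lifting is imported from the prior work discussed above: the polynomial-time algorithm of Robertson and Seymour~\cite{RS13}, Bodlaender's linear-time algorithm~\cite{bod}, and the improvement in~\cite{tree}.

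First I would invoke the iterative-compression / self-reduction scheme of~\cite{tree}: in time $2^{O(w)}\,n$ one either certifies that $\tw(G) > w$ --- in which case the algorithm halts and reports this --- or produces a tree-decomposition of $G$ of width at most $c\cdot w$ for an absolute constant $c$. The mechanism builds the decomposition bag by bag while maintaining a decomposition of a vertex-deleted subgraph, repeatedly locating balanced separators of size $O(w)$ by Menger-type flow computations; failure to find such a separator witnesses $\tw(G) > w$, and the single-exponential factor $2^{O(w)}$ is exactly the cost of guessing how the current boundary of size $O(w)$ distributes across the separator.

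Second, given a tree-decomposition of width $k = O(w)$, I would run the Bodlaender--Kloks style dynamic programming bottom-up along the tree. At each node one stores the set of \emph{characteristics}: a succinct record of all ways a hypothetical tree-decomposition of width at most $w$ can restrict to the part of $G$ processed so far, together with the trace it leaves on the current bag. The crucial combinatorial point is that the number of distinct characteristics at any node is bounded by a function $h(w)$ of $w$ alone (and, with the sharpened structural lemmas, by $2^{O(w)}$), and that the introduce/forget/join transitions are computable in time polynomial in $h(w)$. If the characteristic set at the root is nonempty, a standard back-trace yields a tree-decomposition of width at most $w$; otherwise $\tw(G) > w$. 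Charging $h(w)^{O(1)}$ work to each of the $O(kn)$ nodes gives the claimed $2^{O(w)}\,n^{O(1)}$ bound (linear in $n$), and for fixed $w$ the constants $c$, $k$, $h(w)$ are all bounded, so the total running time becomes $O(n)$, as asserted in the last sentence of the theorem.

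The main obstacle --- and the reason this is not a one-line reduction to~\cite{RS13,bod} --- is pushing the dependence on $w$ down to single-exponential: both phases naively incur $2^{\mathrm{poly}(w)}$ (for instance $2^{O(w^3)}$ in Bodlaender's original argument), and shaving this to $2^{O(w)}$ requires the refined bounds on the number of relevant characteristics together with the economical separator-guessing step, which is precisely the technical content of~\cite{tree}. The remaining details --- correctness of the characteristic transitions and the routine bookkeeping about gluing tree-decompositions along separators --- I would only state, not spell out.
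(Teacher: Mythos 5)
The paper does not actually give a proof of Theorem~\ref{tree1}: it states the result and defers to the cited literature (Robertson--Seymour~\cite{RS13}, Reed~\cite{rtree}, Bodlaender~\cite{bod}, P\'erkovi\'c--Reed~\cite{tree}). Your sketch is a reasonable high-level account of exactly those algorithms --- a linear-time constant-factor approximation via iterative compression followed by Bodlaender--Kloks characteristic-based dynamic programming --- so it is essentially the same route as the paper, just spelled out rather than cited. One small caution worth flagging: the single-exponential $2^{O(w)}$ dependence asserted in the theorem (and which you attribute to~\cite{tree}) is not what the cited P\'erkovi\'c--Reed paper actually delivers; their improvement on Bodlaender is still super-exponential in $w$, and truly single-exponential approximation algorithms postdate this paper. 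That looseness, however, is inherited from the theorem statement itself, and since the paper only ever uses Theorem~\ref{tree1} with fixed $w$, it is harmless for the application.
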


We can apply dynamic programming to solve the graph coloring problem on graphs of
bounded tree-width, in the same way that we apply it to trees, provided that we are given  a bounded width tree
decomposition (see
e.g.~\cite{Arn}). Thus Theorem \ref{tree1} together with \cite{Arn} and \cite{Jansen} implies the following.

\begin{theorem}
\label{twcolor}
For integers $w$ and $k$,
there exists a $(2^{O(kw)}) n^{O(1)}$ time algorithm
to determine $\chi (G)$ in a graph $G$ of tree-width $w$.
Moreover, if $w$ and $k$ are fixed, there exists an $O(n)$ time algorithm to color $G$ using $\chi (G)$ colors.
\end{theorem}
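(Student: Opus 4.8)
The plan is to combine the algorithmic tree-decomposition from Theorem~\ref{tree1} with the standard dynamic programming for graph colouring over a bounded-width tree-decomposition. First I would apply Theorem~\ref{tree1} to obtain, in time $2^{O(w)}n^{O(1)}$ (or $O(n)$ time when $w$ is fixed), a tree-decomposition $(T,Y)$ of $G$ of width $w$. Since a graph of tree-width at most $w$ is $w$-degenerate we have $\chi(G)\le w+1$, so to find $\chi(G)$ it suffices to decide $k$-colourability for $k=1,2,\dots,w+1$ and return the smallest $k$ that works. Before running the dynamic program I would root $T$ and replace $(T,Y)$ by an equivalent \emph{nice} tree-decomposition with $O(wn)$ nodes, each of one of the four standard types (leaf, introduce, forget, join) and each bag of size at most $w+1$; this is a routine linear-time transformation.

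For a fixed target $k$ I would then run a bottom-up dynamic program in which each node $t$ carries a table $D_t$ consisting of all maps $c\colon Y_t\to\{1,\dots,k\}$ that extend to a proper $k$-colouring of the subgraph $G_t$ induced by the union of the bags in the subtree rooted at $t$ (to keep the table small one can instead store the ordered partitions of $Y_t$ into at most $k$ classes that such colourings induce). As $|Y_t|\le w+1$, each table has at most $k^{w+1}\le 2^{O(kw)}$ entries, using $k\le w+1$. The update rules are the usual ones: a leaf enumerates the proper $k$-colourings of its constant-size bag; an introduce node extends each stored colouring by any colour avoiding the neighbours of the new vertex inside the bag; a forget node projects onto the smaller bag; and a join node retains exactly the colourings common to both children's tables once the shared bag is identified. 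Correctness is proved by induction along $T$, the key point being the third axiom of a tree-decomposition, which ensures that the two subtrees below a join node interact only through their common bag. Then $G$ is $k$-colourable iff the root table is non-empty, and the total cost is $2^{O(kw)}n^{O(1)}$, in agreement with \cite{Arn,Jansen}.

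To output an optimal colouring and not merely $\chi(G)$, I would attach to every table entry a pointer to the child entries it was produced from, and after finding the least feasible $k$ and a non-empty root entry, trace back from the root to the leaves, at each introduce node committing to a colour for the new vertex consistent with the pointers; this reconstructs a proper $\chi(G)$-colouring of $G$. When $w$ and $k$ are fixed every table has $O(1)$ size and $T$ has $O(n)$ nodes, so both the decision and the reconstruction run in $O(n)$ time. There is no real obstacle here — the theorem is essentially an assembly of known ingredients — but the points that need care are the $2^{O(kw)}$ table-size bound, the precise formulation of the join-node rule together with its correctness proof against the connectivity property of tree-decompositions, and checking that the traceback remains globally consistent across join nodes; these are exactly the details carried out in \cite{Arn,Jansen}.
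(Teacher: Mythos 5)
The paper does not give a proof of Theorem~\ref{twcolor}; it simply combines Theorem~\ref{tree1} with citations to \cite{Arn} and \cite{Jansen}. Your proposal spells out exactly that standard construction (obtain the decomposition via Theorem~\ref{tree1}, normalize to a nice tree-decomposition, run the leaf/introduce/forget/join dynamic program with table size $k^{w+1}\le 2^{O(kw)}$, and trace back), so it is correct and matches the approach the paper delegates to the references.
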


By the same way, we can actually obtain the following too (see \cite{parity})

\begin{theorem}
\label{oddparity}
For integers $w$ and $k$,
there exists a $(2^{O(kw)}) n^{O(1)}$ time algorithm
to determine whether or not $G$ has an odd $K$-model in a graph $G$ of tree-width $w$ (where $|K|=k$).
Moreover, if $w$ and $k$ are fixed, there exists an $O(n)$ time algorithm for this problem.
\end{theorem}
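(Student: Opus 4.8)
The plan is a bottom-up dynamic program over a tree decomposition. First, one cheap reduction: an odd $K_k$-minor of $G$ contains a $K_k$-minor of $G$, and a $K_k$-minor forces tree-width at least $k-1$; hence if $k\ge w+2$ we may immediately output ``no'', and from now on we assume $k\le w+1$. Using Theorem~\ref{tree1} we compute a tree decomposition of $G$ of width $w$ in time $2^{O(w)}n^{O(1)}$ (in time $O(n)$ when $w$ is fixed), and convert it in linear time into a \emph{nice} tree decomposition with the usual leaf, introduce-vertex, introduce-edge, forget-vertex and join nodes; for a node $t$ we write $B_t$ for its bag and $G_t$ for the subgraph of $G$ formed by all vertices and edges introduced at or below $t$. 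Throughout, write $[k]=\{1,\dots,k\}$.

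A \emph{signature} at $t$ is a tuple $\sigma=(\phi,\chi,\pi,D,A)$ encoding how a partial odd $K$-model meets $B_t$: here $\phi\colon B_t\to[k]\cup\{\bot\}$ says which branch set (if any) a bag vertex lies in; $\chi$ is a $2$-colouring of $\phi^{-1}([k])$ recording the colours of the labelled bag vertices; $\pi$ is a partition of $\phi^{-1}([k])$, refining the partition into label-classes, recording which labelled bag vertices lie in the same connected component of their branch set inside $G_t$; $D\subseteq[k]$ is the set of labels whose branch set is already complete, i.e.\ lies entirely inside $G_t$, is connected, and is disjoint from $B_t$; and $A$ is a symmetric relation on $[k]$ recording which label-pairs are already joined by an edge of $G_t$. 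We set $\mathrm{tab}[t][\sigma]=1$ exactly when $G_t$ contains pairwise-disjoint (possibly empty) connected subgraphs $X_1,\dots,X_k$ together with a $2$-colouring $c$ of $\bigcup_iX_i$ that are consistent with $\sigma$ in the obvious way, such that each $G_t[X_i]$ is properly coloured by $c$ and every edge of $G_t$ with ends in distinct $X_i,X_j$ is monochromatic. The number of signatures at a node is at most $(k+1)^{w+1}\cdot 2^{w+1}\cdot B_{w+1}\cdot 2^{k}\cdot 2^{\binom{k}{2}}$, where $B_{w+1}$ is a Bell number; this is bounded by a function of $k$ and $w$ (a constant when both are fixed), and a careful count — using $k\le w+1$, and the representative-sets refinement of the connectivity part as in \cite{parity} — gives the dependence $2^{O(kw)}$.

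The transitions are the routine ones. At a leaf, only the empty signature has value $1$. At an introduce-vertex node we branch over the label (possibly $\bot$) and, if labelled, the colour of the new vertex, giving it a singleton $\pi$-class and leaving $D$ and $A$ unchanged. At an introduce-edge node with edge $uv$: if $\phi(u)=\phi(v)=i\ne\bot$ the edge lies inside branch set $i$, so we require $\chi(u)\ne\chi(v)$ (properness) and merge the $\pi$-classes of $u$ and $v$; if $\phi(u)=i\ne j=\phi(v)$ with $i,j\ne\bot$, the edge joins branch sets $i$ and $j$, so we require $\chi(u)=\chi(v)$ (monochromaticity) and add $\{i,j\}$ to $A$; if either end is $\bot$, nothing changes (an edge meeting an unused vertex imposes no constraint). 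At a forget-vertex node $v$, we pass $\sigma$ on only if this does not orphan a branch set: if some $\pi$-class of a label $i$ is about to lose its last bag vertex, it must have been the only $\pi$-class of $i$ (the branch set is then connected and complete), in which case $i$ is added to $D$; otherwise the forget is forbidden. At a join node with children $t_1,t_2$ sharing the bag, we combine $\sigma_1,\sigma_2$ when they agree on $\phi$ and $\chi$, setting $\pi$ to the transitive closure of $\pi_1\cup\pi_2$, $A=A_1\cup A_2$, $D=D_1\cup D_2$, and checking disjointness (a label may be complete on at most one side, and a complete label may not also occur on $B_t$). Finally $G$ has an odd $K$-model iff the root $r$ (with $B_r=\emptyset$ and $G_r=G$) has the entry $1$ for the signature with $D=[k]$ and $A$ the set of all $\binom{k}{2}$ pairs — which, note, already forces every branch set to be nonempty. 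Correctness follows by the standard induction on the decomposition, and multiplying the number of signatures (and the cost of each transition) by the $O(n)$ nodes, and adding the cost of Theorem~\ref{tree1}, gives total time $2^{O(kw)}n^{O(1)}$, and $O(n)$ for fixed $k$ and $w$.

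I expect the only genuine difficulty to be bookkeeping, of two kinds. The first is correctly propagating connectivity of the branch sets through forget and join nodes — the classical trap in minor-testing dynamic programs — which is handled by the partition $\pi$ together with the finished-label set $D$; one must check that when the last bag vertex of a branch set is forgotten, all of that branch set's internal and external edges have already been processed, which holds because in a nice decomposition each edge is introduced below the forget node of each of its endpoints. The second is the simultaneous maintenance of the proper-colouring constraint inside branch sets and the monochromatic constraint between them as $\pi$-classes merge; storing the actual colour $\chi$ on bag vertices (rather than colours relative to a component) keeps every check local — it becomes ``the stored colours already agree'' — while the genuinely distinct colourings of distinct components of one branch set are simply enumerated as distinct signatures. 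A last, minor point is the running-time estimate: the $2^{O(w\log w)}$ factor from the partition $\pi$ is exactly what the representative-sets technique of \cite{parity} removes in order to reach $2^{O(kw)}$. Finally, we remark that the bare existence of such an algorithm also follows from Courcelle's theorem, since for fixed $k$ the property ``$G$ has an odd $K_k$-minor'' is monadic second-order expressible (quantify over the $k$ branch sets and the class of colour $1$, and assert disjointness, connectivity of each part, properness within each part, a joining edge between each pair, and monochromaticity of every edge across parts); that route, however, does not yield the explicit $2^{O(kw)}$ bound.
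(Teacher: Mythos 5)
The paper offers essentially no proof of this theorem — it is dismissed with ``By the same way, we can actually obtain the following too (see \cite{parity})'' — so there is no in-paper argument to compare against. Your dynamic program over a nice tree decomposition is exactly the sort of argument the citation to \cite{parity} is standing in for, and the design is sound: the signature $(\phi,\chi,\pi,D,A)$ is the natural augmentation of the usual minor-testing table by a colour class $\chi$ on labelled bag vertices, the introduce-edge rule correctly imposes the bichromatic/monochromatic constraint depending on whether the two ends share a label, the forget rule correctly handles orphaning via $\pi$ and $D$, and the initial reduction to $k\le w+1$ (a $K_k$-minor forces tree-width at least $k-1$) is the right trick to keep the $2^{O(k^2)}$ factor inside $2^{O(kw)}$. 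Your remark about why the partition-of-bag factor needs the representative-sets refinement to reach $2^{O(kw)}$ rather than $2^{O(w\log w)}$ is also accurate and honest.

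One subtlety you should make explicit, because it is the only place where correctness could actually be at issue. Your table entry enforces a \emph{global} condition — $G_t[X_i]$ is properly $2$-coloured and \emph{every} edge of $G_t$ with one end in $X_i$ and the other in $X_j$ is monochromatic — whereas the definition in Geelen--Gerards--Reed--Seymour--Vetta (the paper's reference \cite{geelen}) only requires \emph{some} edge between each pair of trees to be monochromatic, and only the tree edges themselves to be bichromatic. Your DP, as written, would not directly ``see'' a model whose branch sets induce a non-mono cross-edge or an internal mono chord. This is the interpretation the present paper uses (its informal definition says ``the edges between trees are monochromatic''), and the two characterizations are in fact equivalent after one reroutes/shrinks the branch sets, but that equivalence is not trivial and should be stated (or a transition added that allows an edge whose two ends lie in the same labelled class to be ``ignored'' rather than forced bichromatic, which is the cleaner fix). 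Aside from this one point, which you should either justify or design around, the proof is complete and correct, and it supplies the details the paper omits.

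Two trivial wording fixes: the phrase ``connected subgraphs $X_1,\dots,X_k$'' in the definition of $\mathrm{tab}[t][\sigma]$ should say ``subgraphs each of whose components meets $B_t$'' (which is what $\pi$ is tracking — you clearly intend this, since your forget rule only lets a class disappear when the branch set is finished), and in the join rule you should add the standard disjointness check that the two sides agree on $G_{t_1}\cap G_{t_2}=B_t$, which they do by construction in a nice decomposition, so nothing is actually wrong, but stating it avoids confusion.
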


\paragraph{Clique model}

For an integer $p$, $K_p$ is the complete graph with $p$ vertices.

A graph $G$ contains a \emph{$K_p$-model}
if there exists a function $\sigma$ with
domain $V(K_p) \cup E(K_p)$ such that
\begin{enumerate}
\item
for each vertex $v \in V(K_p)$, $\sigma(v)$ is a connected subgraph of $G$,
and the subgraphs $\sigma(v)$ ($v \in V(K_p)$) are
pairwise vertex-disjoint, and
\item
for each edge $e=uv \in E(K_p)$, $\sigma(e)$ is an edge $f \in
E(G)$, such that $f$ is incident in $G$ with a vertex in $\sigma(u)$
and with a vertex in $\sigma(v)$.
\end{enumerate}
Thus $G$ contains a $K_p$-minor if and only if $G$ contains a
$K_p$-model.
We call the subgraph $\sigma(v)$ ($v \in V(K_p)$) the
\emph{node} of the $K_p$-model.
The image of $\sigma$, which is a subgraph of $G$, is called the {\em $K_p$-model}.

We say that a $K_p$-model is \emph{even} if the union of the nodes
of the $K_p$-model consists of a bipartite graph. We also say that a
$K_p$-model is \emph{odd} if for each cycle $C$ in the union of the
nodes of the $K_p$-model, the number of edges in $C$ that belong to
nodes of the $K_p$-model is even. Thus if there is an odd $K_p$-model,
then there is a 2-coloring such that each node is bichromatic while each
edge joining two nodes is monochromatic.

\section{Structure Theorems}
\label{structthm}
\showlabel{structthm}

In the next section, we shall give a structure theorem for graphs without odd $K_t$-model.
In order to present that, we need to give several definitions concerning Robertson-Seymour's graph minor structure theorem.

\subsection{Tangles}

Let $G$ be a graph and let $A,B$ be subgraphs of $G$.
We say that the pair $(A,B)$ is a \DEF{separation} of $G$
if $A \cup B = G$, $V(A)-V(B)\not=\emptyset$, and
$V(B)-V(A)\not=\emptyset$. The \DEF{order} of a separation
$(A,B)$ is $|V(A) \cap V(B)|$.

A \emph{tangle} of order $k$ of $G$ is a set $\mathfrak T$ of
separations of $G$ of order $<k$ satisfying the following there
conditions.
\begin{enumerate}
\item For all separations $(A,B)$ of $G$ of order $<k$, either
  $(A,B)\in\mathfrak T$ or  $(B,A)\in\mathfrak T$.
\item If $(A_1,B_1),(A_2,B_2),(A_3,B_3)\in\mathfrak T$ then $A_1\cup
  A_2\cup A_3\neq G$.
\end{enumerate}

Note that if $(A,B)\in\mathfrak T$  then $(B,A)\notin\mathfrak T$; we think of $B$ as the `big side' of the separation~$(A,B)$, with respect to this tangle (and similarly $A$ is the ``small side'').

Let $\mathfrak T$ be a tangle of order at least $p$. We say that a $K_p$-model is \emph{controlled by the tangle $\mathfrak T$} if
no node of the $K_p$-model is contained in $A-B$ of any separation $(A,B)\in\mathfrak T$ of order
at most $p-1$.
%
%
%

\subsection{Societies and Vortices}

A \emph{society} is a pair $(G,\Omega)$, where $G$ is a graph and
$\Omega$ a cyclic permutation of a subset $V(\Omega)$ of $V(G)$ (we call $V(\Omega)$ \emph{society vertices}). Note that
for every $w\in V(\Omega)$ we have $V(\Omega)=\{\Omega^j(w)\mid 0\le
j<|V(\Omega)|\}$. The
\emph{length} of a society $(G,\Omega)$ is $|V(\Omega)|$.

A society $(G,\Omega)$ of length $\ell$ is a \emph{$\rho$-vortex} if for all
$w\in{V(\Omega)}$ and $k\in[\ell]$ there do not exist
$(\rho+1)$ mutually disjoint paths of $G$ between $\{\Omega^j(w)\mid
0\le j<k\}$ and $\{\Omega^j(w)\mid k\le j<\ell\}$.

A \emph{linear decomposition} of a society $(G,\Omega)$ of length $\ell$ is a sequence
$(X_i)_{0\le i<\ell}$ of subsets of $V(G)$ such that
\begin{enumerate}
\item $\bigcup_{i=0}^{\ell-1} X_i=V(G)$.
\item $X_i\cap X_k\subseteq X_j$ for
$0\le i\le j\le k<\ell$.
\item There is a $x_0\in{V(\Omega)}$ such that
$\Omega^i(x_0)\in X_i$ for $0\le i<\ell$.
\end{enumerate}
The \emph{width} of the linear decomposition
$(X_i)_{0\le i<\ell}$ is $\max\{|X_i|\mid0\le i<\ell\}$, and the
\emph{depth} of $(X_i)_{0\le i<\ell}$ is $\max\{|X_i\cap
X_{i+1}|\mid0\le i<\ell-1\}$. Sometimes $X_i$ is called a \emph{bag} (of a linear decomposition of a society $(G,\Omega)$).

The following  is proved in \cite{RS9}:

\begin{lemma}\label{lem:omega}\showlabel{lem:omega}
 If a society $(G,\Omega)$ is an $\alpha$-vortex
then it has a linear decomposition of depth at most $\alpha$. Moreover,
such a decomposition can be found in $O(n^2)$ time.
\end{lemma}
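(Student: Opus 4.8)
\noindent The plan is to build the linear decomposition directly from a nested family of small vertex cuts supplied by the vortex hypothesis through Menger's theorem. Fix any $x_0\in V(\Omega)$ and list the society vertices in their cyclic order as $v_0=x_0,v_1,\dots,v_{\ell-1}$. For $0\le i\le\ell$ set $L_i=\{v_0,\dots,v_{i-1}\}$ and $R_i=\{v_i,\dots,v_{\ell-1}\}$, so the $L_i$ increase from $\emptyset$ to $V(\Omega)$ while the $R_i$ decrease. By the definition of an $\alpha$-vortex applied with $w=x_0$ and $k=i$, for $1\le i\le\ell-1$ there is no family of $\alpha+1$ pairwise disjoint $L_i$--$R_i$ paths, so by Menger's theorem the minimum size $\kappa_i$ of an $L_i$--$R_i$ separator satisfies $\kappa_i\le\alpha$. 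The idea is that, once these separators are chosen canonically, they become nested, and then every vertex of $G$ is ``active'' only on a contiguous range of indices; the bags of the decomposition are obtained by cutting those ranges open, and the overlap of two consecutive bags lands inside one of the cuts, hence has size at most $\alpha$.

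The first step is to make the cuts canonical and prove nesting. For $1\le i\le\ell-1$ choose a minimum $L_i$--$R_i$ separator $S_i$ so that $A_i:=L_i\cup\{v:\ v\text{ is reachable from }L_i\text{ in }G-S_i\}$ is inclusion-minimal; this choice is unique by the standard submodularity of vertex cuts, and $|S_i|=\kappa_i\le\alpha$. Put $S_0=S_\ell=\emptyset$, $A_0=\emptyset$, $A_\ell=V(G)$. The main claim is $A_0\subseteq A_1\subseteq\cdots\subseteq A_\ell$. Writing $N(W)$ for the set of vertices outside $W$ having a neighbour in $W$ (a submodular function), one checks that $A_i\cap A_{i+1}$ contains $L_i$ and misses $R_i$ while $A_i\cup A_{i+1}$ contains $L_{i+1}$ and misses $R_{i+1}$, so $N(A_i\cap A_{i+1})$ is an $L_i$--$R_i$ separator and $N(A_i\cup A_{i+1})$ is an $L_{i+1}$--$R_{i+1}$ separator. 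From $|N(A_i\cap A_{i+1})|+|N(A_i\cup A_{i+1})|\le|N(A_i)|+|N(A_{i+1})|$, together with $|N(A_i\cap A_{i+1})|\ge\kappa_i$ and $|N(A_i\cup A_{i+1})|\ge\kappa_{i+1}$, both inequalities are tight; in particular $N(A_i\cap A_{i+1})$ is a \emph{minimum} $L_i$--$R_i$ separator, so its source side is contained in $A_i\cap A_{i+1}$ and, by inclusion-minimality of $A_i$, contains $A_i$. Hence $A_i\subseteq A_i\cap A_{i+1}\subseteq A_{i+1}$.

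Given the chain $A_0\subseteq\cdots\subseteq A_\ell$, for each vertex $v$ let $a(v)=\min\{i:\ v\in A_i\}$; since $S_i\cap A_i=\emptyset$ and ``$v$ has a neighbour in $A_i$'' is upward closed in $i$, the set $\{i:\ v\in S_i\}$ is an interval $\{b(v),\dots,a(v)-1\}$ (empty if $v$ lies in no $S_i$). Define, for $0\le i\le\ell-1$,
\[
X_i\;=\;S_i\cup S_{i+1}\cup\{v:\ a(v)=i+1\},
\]
placing any vertices lying in no component that meets $V(\Omega)$ into $X_0$. A short computation shows $\{i:\ v\in X_i\}$ is precisely the interval $[\,b(v)-1,\ a(v)-1\,]$ (the single point $\{a(v)-1\}$ when $v$ avoids every $S_i$). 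This gives condition~(1) ($v\in X_{a(v)-1}$) and condition~(2) (each vertex's index set is an interval). Condition~(3) holds because $v_i\notin A_i$ but $v_i\in L_{i+1}\subseteq A_{i+1}$, so $a(v_i)=i+1$ and $v_i\in X_i$. Finally, if $v\in X_i\cap X_{i+1}$ then $\{i,i+1\}$ lies in $v$'s index interval, which forces $v\in S_{i+1}$; hence $|X_i\cap X_{i+1}|\le|S_{i+1}|\le\alpha$, so the depth is at most $\alpha$.

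For the running time, each inclusion-minimal minimum $L_i$--$R_i$ separator comes from a maximum $L_i$--$R_i$ flow (of value $\kappa_i\le\alpha$, hence at most $\alpha+1$ augmenting-path searches, each $O(m)$) followed by one reachability search in the residual graph; processing $i=1,\dots,\ell-1$ incrementally — the source and sink sets change by a single vertex at each step — brings the total cost to $O(n^2)$. The step I expect to be the genuine obstacle is exactly the nesting of the cuts plus the verification that it produces honest vertex-intervals: one must deal with society vertices that fall inside some $S_i$, with the degenerate indices $0$ and $\ell$, and with components of $G$ not attached to the society. Each is routine, but all are needed for conditions~(2) and~(3) to hold literally.
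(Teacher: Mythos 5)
The paper does not prove this lemma itself; it cites Robertson--Seymour's Graph Minors IX \cite{RS9} for the combinatorial statement and merely remarks that the algorithmic claim follows from that proof. So your argument must stand alone, and I think it has a genuine gap at precisely the place you flag at the end as ``routine'': society vertices that fall inside some $S_i$.

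You define $A_i:=L_i\cup\{v:\ v\text{ reachable from }L_i\text{ in }G-S_i\}$ and then assert $S_i\cap A_i=\emptyset$ as the first step of the interval argument. Since $L_i\subseteq A_i$, that assertion fails whenever a minimum $L_i$--$R_i$ separator meets $L_i$, and this really happens. Take $G$ the star with centre $a$ and leaves $b_1,b_2$, with $V(\Omega)=\{a,b_1,b_2\}$ in this cyclic order; this is a $1$-vortex. At $i=1$ we have $L_1=\{a\}$, $R_1=\{b_1,b_2\}$, $\kappa_1=1$, and the \emph{unique} minimum separator is $S_1=\{a\}$; then $A_1=\{a\}$, so $S_1\cap A_1=\{a\}\ne\emptyset$ and $|N(A_1)|=2>\kappa_1$. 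Once $|N(A_i)|$ can exceed $\kappa_i$, your submodularity chain is no longer squeezed between equal quantities, so ``both inequalities are tight'' is unjustified; and the claim that $\{i:v\in S_i\}$ is the interval $\{b(v),\dots,a(v)-1\}$ --- exactly what you later invoke to get $X_i\cap X_{i+1}\subseteq S_{i+1}$ and hence the depth bound --- rests on $S_i=N(A_i)$, which has just failed. The underlying tension is real: the Menger separator $S_i$ has size $\le\alpha$ but may meet $L_i$, while the boundary set $N(A_i)$ is disjoint from $L_i$ (and makes the interval argument work) but may have size $>\alpha$. Your argument needs one object with both properties, and bridging that gap is not a cosmetic patch. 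Two smaller points: placing vertices of components disjoint from $V(\Omega)$ ``into $X_0$'' while they already satisfy $a(v)=\ell$ (hence land in $X_{\ell-1}$) would break condition~(2) as written; you should redefine $a(v)$ for such vertices instead. And the $O(n^2)$ running time is not actually established: $\ell-1$ bounded-value max-flow computations cost $O(\alpha\ell m)$ naively, which can be $\Theta(n^3)$, and the ``incremental'' speed-up is stated as a hope rather than proved.
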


Note that the algorithmic result follows from the proof in \cite{RS9}
(or see \cite{RS} for an easier description of the algorithm).
A stronger result is given in \cite{newgm} too.

\subsection{Near Embeddings}

Robertson and Seymour's main theorem is concerning the structure
capturing a big side with respect to a tangle. We now mention one version of their result. We follow the notations in \cite{DKMW}, but let us
repeat some of them for a self-contained reason.

For a positive integer $\alpha$, a graph $G$ is
\textit{$\alpha$-nearly embeddable} in a surface $\Sigma$ if there
is a subset $Z\subseteq V(G)$ with $|Z|\leq \alpha$ (i.e., the apex vertex set), two sets
$\mathcal V=\{(G_1,\Omega_1),\ldots,(G_{\alpha'},\Omega_{\alpha'})\}$, where $\alpha'\le\alpha$, and $\mathcal W=\{(G_{\alpha'+1},\Omega_{\alpha'+1}),\ldots,(G_{n},\Omega_{n})\}$ of societies, and a graph $G_0$ such that
the following conditions are satisfied.
\begin{enumerate}
\item $G-Z=G_0\cup G_1\cup\ldots\cup G_n$.
\item  For all  $1\leq i\leq j \leq n$ and $\Omega_i:= V(G_i\cap G_0)$, the pairs $(G_i,\Omega_i)=:V_i$ are vortices and $G_i\cap G_j \subseteq G_0$ when $i\neq j$ .
\item The vortices $V_1,\ldots,V_{\alpha'}$ are disjoint and have adhesion at most $\alpha$; we denote the set of these vortices by $\V$. 
We will sometimes refer to these vortices as \emph{large} vortices.
\item The vortices $V_{\alpha'+1},\ldots,V_n$ have length at most 3; we denote the set of these vortices by $\W$. 
These are the \emph{small} vortices of the near-embedding.
\item
There are closed discs in~$\Sigma$ with disjoint interiors $D_1,\ldots, D_n$ and an embedding ${\sigma: G_0 \hookrightarrow \Sigma-\bigcup_{i=1}^n D_i}$ such that $\sigma(G_0)\cap\boundary D_i = \sigma(\Omega_i)$ for all~$i$ and the generic linear ordering of $\Omega_i$ is compatible with the natural cyclic ordering of its image (i.e., coincides with the linear ordering of $\sigma(\Omega_i)$ induced by $[0,1)$ when $\boundary D_i$ is viewed as a suitable homeomorphic copy of $[0,1]/\{0,1\}$). For $i=1,\dots,n$ we think of the disc $D_i$ as \emph{accommodating} the (unembedded) vortex~$V_i$, and denote $D_i$ as~$D(V_i)$.
\end{enumerate}
We call $(\sigma,G_0,Z,\mathcal V,\mathcal W)$ an \emph{$\alpha$-near
  embedding} of $G$ in~$\Sigma$ or just \emph{near-embedding} if the
bound is clear from the context.

Let $G'_0$ be the graph resulting from $G_0$ by joining any two nonadjacent vertices $u,v\in G_0$ that lie in a common vortex $V\in\W$; the new edge $uv$ of $G'_0$ will be called a \emph{virtual edge}. By embedding these virtual edges disjointly in the disks $\Delta$ accommodating their vortex~$V$, we extend our embedding $\sigma\colon G_0\hookrightarrow\Sigma$ to an embedding $\sigma'\colon G'_0\hookrightarrow\Sigma$. We shall not normally distinguish $G'_0$ from its image in $\Sigma$ under~$\sigma'$.

\drop{
A vortex $(G_i,\Omega_i)$ is \emph{properly attached} to $G_0$ if it satisfies the following two requirements. First, for every pair of distinct vertices $u,v\in \Omega_i$ the graph $G_i$ must contain a path (one with no inner vertices in~$\Omega_i$) from $u$ to~$v$. Second, whenever $u,v,w\in\Omega_i$ are distinct vertices (not necessarily in this order), there are two internally disjoint paths in $G_i$ linking $u$ to~$v$ and $v$ to~$w$, respectively.

A near-embedding $(\sigma,G_0,Z,\mathcal V,\mathcal W)$ is \emph{nice}
if for all $(G_i,\Omega_i)\in\mathcal V$ there is a cycle
$C_i\subseteq G_0$ such that $\sigma(C_i)$ is the
boundary of the disk $\Delta_i$.}

A near-embedding $(\sigma,G_0,Z,\mathcal V,\mathcal W)$ is
controlled by a tangle \emph{$\mathfrak T$}, if
for all $(H,\Omega)\in\mathcal V\cup\mathcal W$ there is no
$(A,B)\in\mathfrak T$ with $Z\subseteq V(A\cap B)$ and $B\setminus Z\subseteq H$.

%

A cycle $C$ in $\Sigma$ is \emph{flat} if $C$ bounds an open disk
$D(C)$ in $\Sigma$. A flat triangle is a \emph{boundary triangle} if
it bounds a disk that is a face of $G'_0$ in~$\Sigma$. Disjoint
cycles $C_1,\ldots,C_n$ in $\Sigma$ are \emph{concentric} if they
bound disks $D_1 \supseteq \ldots \supseteq D_n$ in~$\Sigma$. A path
system~$\P$ (i.e, a set of disjoint paths) intersects
$C_1,\ldots,C_n$ \emph{orthogonally} if every path $P$ in $\P$
intersects each of the cycles in a (possibly trivial) subpath of
$P$.

For a near-embedding $(\sigma,G_0,A,\V,\W)$ of some graph $G$ in a surface $\Sigma$ and a vortex  $V\in\V$,  let $C_1,\ldots,C_n$ be cycles in $G'_0$ that are concentric in $\Sigma$. The cycles $C_1,\ldots,C_n$ \emph{enclose} $V$ if $D(C_n)\setminus \boundary D(C_n)$ contains~$\Omega(V)$. They  \emph{tightly enclose} $V$ if the following holds:
\begin{equation*}
\begin{minipage}[c]{0.8\textwidth}\em
For every vertex $v\in V(C_k)$ and for all $1\leq k \leq n$, there is a vertex $w\in\Omega(V)$
such that the face distance of $v$ and $w$ in $\Sigma$ is at most $n-k+2$.
\end{minipage}\ignorespacesafterend
\end{equation*}

For positive integers~$r$, define a graph $H_r$ as follows.  Let $P_1, \dots, P_r$
be $r$ vertex disjoint (`horizontal') paths of length $r-1$, say $P_i = v_1^i\dots
v_r^i$. Let $V(H_r) = \bigcup_{i=1}^r V(P_i)$, and let
\begin{equation*}
\begin{split}
E(H_r) = \bigcup_{i=1}^r E(P_i) \cup
\Big\{&v_j^i v_j^{i+1} \mid \text{ $i,j$ odd};\ 1 \le i < r;\ 1 \le j \le r\Big\} \\
& \cup \Big\{v_j^i v_j^{i+1} \mid \text{ $i,j$ even};\ 1 \le i < r;\ 1 \le j \le r\Big\}.
\end{split}
\end{equation*}
\drop{
We call the paths $P_i$ the \emph{rows} of $H_r$; the paths induced by the vertices $\{v^i_j,v^i_{j+1}:1\leq i\leq r\}$ for an odd index $i$ are its \emph{columns}.
}
The 6-cycles in $H_r$ are its \emph{bricks}. In the natural plane embedding of~$H_r$, these bound its `finite' faces. The outer cycle of the unique maximal 2-connected subgraph of $H_r$ is the \emph{boundary cycle}  of $H_r$.

Any subdivision $H = T H_r$ of $H_r$ will be called an \emph{$r$-wall} or a \emph{wall of size $r$}. The \emph{bricks} and the \emph{boundary cycle} of $H$ are its subgraphs that form subdivisions of the bricks and the boundary cycle of~$H_r$, respectively. An embedding of $H$ in a surface~$\Sigma$ is a \emph{flat} embedding, and $H$ is \emph{flat} in~$\Sigma$, if the boundary cycle $C$ of $H$ bounds a disk $D(H)$ that contains a vertex of degree 3 of $H-C$.

A closed curve $C$ in $\Sigma$ is \emph{genus-reducing} if the (one or two) surfaces obtained by capping the holes of the components of $\Sigma\setminus C$ have smaller genus than $\Sigma$. Note that if $C$ separates $\Sigma$ and one of the two resulting surfaces is homeomorphic to $S^2$, the other is homeomorphic to~$\Sigma$. Hence in this case $C$ is not genus-reducing.

The \emph{representativity} of an embedding $G\hookrightarrow\Sigma\not\simeq S^2$ is the smallest integer $k$ such that every genus-reducing curve $C$ in $\Sigma$ that meets $G$ only in vertices meets it in at least $k$ vertices.

An $\alpha$-near embedding $(\sigma,G_0,A,\V,\W)$ of a graph $G$ in
some surface $\Sigma$ is {\it $\delta$-rich} for some integer
$\delta$ if the following statements hold:
\begin{enumerate}[(i)]
\item  \label{prop:bigwall}
$G'_0$ contains a flat $r$-wall $H$ for an integer $r\geq  \delta$.
\item
The representativity of $G'_0$ in $\Sigma$ is at least $\delta$.
\item
For every vortex $V\in \V$ there are $\delta$ concentric cycles
$C_1(V),\ldots,C_{\delta}(V)$ in $G'_0$ tightly enclosing $V$ and
bounding open disks $D_1(V) \supseteq \ldots \supseteq D_\delta(V)$,
such that $D_\delta(V)$ contains $\Omega(V)$ and $D(H)$ does not
intersect $D_1(V)\cup C_1(V)$. For distinct vortices ${V,W\in\V}$,
the disks $\closure{D_1(V)}$ and $\closure{D_1(W)}$ are disjoint.
(Sometime, the cycle $C_1(V)$ is called the outermost cycle, and the cycle $C_{\delta}(V)$ is
called the innermost cycle.)
\item
Every two vortices in $\V$ have distance at least $\delta$ in
$\Sigma$.
\item \label{prop:linkagetogrid}
For every vortex $V\in\V$, its set of society vertices $\Omega(V)$
is linked in $G'_0$ to branch vertices of $H$ by a path system
$\P(V)$ of $\delta$ disjoint paths having no inner vertices in $H$.
\item \label{prop:orthogonalpaths}
For every vortex $V\in\V$, the path system $\P(V)$ intersects the
cycles $C_1(V),\ldots,C_\delta(V)$ orthogonally.
\item
All vortices in $\W$ are properly attached to $G_0$.
\end{enumerate}\bigbreak

We are now ready to state the Robertson and Seymour's main theorem,
Theorem (3.1), in \cite{RS16}. This theorem is concerning the
structure relative to big sides of separations of small order, with respect to a given tangle $\mathfrak T$ of large
order. Actually, we use the following more subtle version of Theorem
(3.1) in \cite{RS16}, which is shown in \cite{DKMW}.

\begin{theorem}\label{thm:3.1}
For every graph~$R$ there is an integer~$\alpha$ such that for every
integer~$\delta$\footnote{The proof in \cite{DKMW} can be easily modified so that $\delta$ can depend on $\alpha$. This will be used in our proof. See Section \ref{smallv} and thereafter.} there is an integer $w=w(R,\delta)$ such that the
following holds. Every graph~$G$ with a tangle $\mathfrak T$ of order at
least $w$ that does not contain $R$ as a minor has an $\alpha$-near,
$\delta$-rich embedding in some surface~$\Sigma$ in which $R$ cannot
be embedded. Moreover, this $\alpha$-near, $\delta$-rich embedding
is controlled by this tangle $\mathfrak T$.
\end{theorem}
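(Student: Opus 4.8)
\medskip\noindent\textbf{Proof plan.}\quad The plan is to deduce Theorem~\ref{thm:3.1} from Robertson and Seymour's original structure theorem, Theorem~(3.1) of \cite{RS16}: that result already produces an $\alpha_0$-near embedding of $G$ in a surface $\Sigma$, controlled by $\mathfrak T$, with $\alpha_0=\alpha_0(R)$ and with $R$ not embeddable in $\Sigma$ (the latter because a controlled near embedding in a surface carrying $R$ would yield an $R$-minor via the tangle). All that remains is to \emph{upgrade} this near embedding to a $\delta$-rich one while keeping the apex bound at some $\alpha=\alpha(R)$ depending on $R$ only; the price is that the order $w$ of the controlling tangle must be allowed to grow with $\delta$, which is exactly what creates room for the ``size-$\delta$'' objects (the flat wall, the concentric cycles around large vortices, the linkages to the wall). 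I would establish conditions (i)--(vii) of $\delta$-richness roughly in the order listed, at each stage passing to a minor of $G$, replacing $\mathfrak T$ by the tangle it induces there, and moving a bounded number of vertices into the apex set $Z$.

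All surgeries that enlarge $Z$ are done first, against thresholds that depend on $R$ only. For representativity (condition~(ii)): while some genus-reducing noose meets the embedded part in fewer than a fixed $R$-dependent number of vertices, cut along it, cap the holes (this only simplifies the surface, so $R$ remains non-embeddable), and move the noose vertices into $Z$; since the Euler genus is bounded in terms of $R$ this terminates after $O_R(1)$ steps and costs $O_R(1)$ apices. For the large vortices: a near embedding has at most $\alpha_0$ of them, so while two lie within distance $\delta$ in $\Sigma$, merge them into a single vortex whose adhesion is still bounded (by a sum of boundedly many adhesions); after $O_R(1)$ merges condition~(iv) holds at no apex cost. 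Finally, any small vortex (length $\le 3$) failing to be properly attached is repaired by pushing one vertex into $Z$ or absorbing the then-degenerate vortex into $G_0$; there are $O_R(1)$ such bad small vortices relative to $\mathfrak T$, so this costs $O_R(1)$ apices and gives condition~(vii). After all of this $|Z|=\alpha(R)$, the surface is fixed, and we rename the current graph and tangle.

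With the surface, apex set, and $\delta$ now fixed, pick $w=w(R,\delta)$ large; the remaining conditions are ``read off'' and cost no further apices. Condition~(i): a tangle of order $w$ in an $R$-minor-free graph controls a wall of size $\Omega_R(w)$ (excluded-wall theorem); being controlled, all but a bounded part of the wall lies flat in $\Sigma$, so for $w$ large a flat $\delta$-wall $H$ survives in $G'_0$. Condition~(iii): between a large vortex $V$ and $H$ the tangle forbids separations of order $<\delta$, so breadth-first search in face-distance from $\Omega(V)$ yields $\delta$ disjoint cycles of $G'_0$ in successive layers; taking them innermost makes them tightly enclose $V$, discarding a bounded number of outer ones keeps $D(H)$ off the outermost cycle, and distinct vortices give disjoint families since they are at distance $\ge\delta$. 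Conditions~(v)--(vi): Menger's theorem together with control by $\mathfrak T$ supplies $\delta$ disjoint paths from $\Omega(V)$ to branch vertices of $H$ avoiding the interiors of the vortex disks (a separator of size $<\delta$ between $\Omega(V)$ and $H$ would violate the tangle), and a standard uncrossing argument reroutes this system so that each path meets each concentric cycle in a subpath, i.e.\ orthogonally.

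The main obstacle is the apex accounting: every correction pulls vertices into $Z$, and one must argue the \emph{total} pull-in is bounded by a function of $R$ alone and never grows with $\delta$ --- otherwise the quantifier order in the statement ($\alpha$ chosen before $\delta$) fails. The resolution is precisely the two-phase split above: perform all apex-increasing surgeries against absolute, $R$-dependent thresholds (finitely many, since the Euler genus and the number of large vortices are $R$-bounded), and only afterwards, with $w$ chosen as a function of the now-final surface, apex set, and $\delta$, extract the size-$\delta$ wall, concentric cycles, and linkages, none of which touch $Z$. One also checks routinely at each step that $R$-minor-freeness is preserved (we only delete, contract, or move vertices into $Z$) and that deleting $O_R(1)$ vertices lowers the tangle order by only $O_R(1)$, so a controlling tangle of order still at least $w-O_R(1)$ persists throughout the argument.
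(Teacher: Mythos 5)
The paper does not actually prove Theorem~\ref{thm:3.1}; it states it as a result from~\cite{DKMW}, with the footnote only recording that the proof there supports the quantifier order used here ($\alpha$ depending on $R$ alone, $\delta$ free, $w$ depending on both). So there is no in-paper argument for your sketch to be compared against — only a citation.

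Evaluating your sketch on its own terms: the overall strategy (start from Robertson--Seymour's Theorem~(3.1) of~\cite{RS16}, do the apex-increasing surgeries first against $R$-dependent thresholds, then read off the size-$\delta$ objects after choosing $w$ large) is the right template, and the treatment of conditions~(i), (iii)--(vii) is broadly plausible. But there is a genuine gap at condition~(ii), the representativity bound. You propose to cut along any genus-reducing noose meeting $G'_0$ in fewer than an $R$-dependent number of vertices and push those vertices into $Z$. That only guarantees representativity at least $r_0(R)$ for some function $r_0$ of $R$ alone, whereas $\delta$-richness requires representativity at least $\delta$, and $\delta$ is a free parameter that can vastly exceed $r_0(R)$. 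If you raise the threshold to $\delta$ instead, each cut may move up to $\delta-1$ vertices into $Z$, and there can be $\Theta_R(1)$ such cuts (one per unit of Euler genus), so $|Z|$ grows like $O_R(\delta)$ — exactly the dependence on $\delta$ that the quantifier order in the statement forbids. Your ``two-phase split'' does not resolve this: once the surface and apex set are frozen after phase~1, increasing $w$ in phase~2 does not retroactively raise the representativity of the embedding of $G'_0$ in $\Sigma$. Closing the gap needs a different mechanism — for example, absorbing the noose vertices into a new or existing vortex rather than into $Z$, or invoking a form of the excluded-minor structure theorem that already delivers arbitrarily high representativity for a tangle-controlled near-embedding at fixed apex cost, which is essentially what~\cite{DKMW} establish. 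As written, your argument does not yield the theorem.
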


Given a tangle $\mathfrak T$, a polynomial time algorithm to construct one of the conclusions in Theorem \ref{thm:3.1} is given in \cite{decomposition,newgm,short}.

\section{Structure theorem for graphs without an odd $K_t$-minor}
\label{sec:struc}
\showlabel{sec:struc}

We now give a structure theorem for graphs without odd $K_k$-model.

\begin{theorem}\label{thm:extended}\showlabel{thm:extended}
For every integer $k$ and every integer $\delta$
there exist integers $\alpha$ and  $\theta$ such that for every
graph $G$ that does not contain an odd $K_k$-model and every $Z\subseteq
V(G)$ with $|Z|\leq 3\theta-2$ there is a rooted tree-decomposition (with a rooted tree $T$ and root $t''$)
$(V_t)_{t\in T}$ of $G$ such that for every $t\in T$, either
\begin{enumerate}
\item
there is a vertex set $Z'_t$ (an apex set) of order at most $\alpha$ such that $V_t-Z'_t$ induces a bipartite graph, and moreover,
$|V(t) \cap V(t') -Z'_t| \leq 1$ for each $tt' \in T$ where $t'$ is a children of $t$, or
\item
there is a
surface $\Sigma_t$ of Euler genus $\alpha$,  and the torso
of $G_t$ (i.e, obtained from the graph induced by $V_t$ by
making all $G_t \cap G_{t'}$ cliques for $tt' \in T$, where $t'$ is a children of $t$) has an
$\alpha$-near, $\delta$-rich, embedding $(\sigma_t,G_{t,0},Z'_t,\V_t,\emptyset)$ into
$\Sigma_t$ with the following properties:
\begin{enumerate}
\item All vortices have linear decompositions of width at most $\alpha$.
\item For every $t'\in T$ with $tt'\in E(T)$ there is a vertex set $X$ which is either
\begin{enumerate}
\item two consecutive parts of an $\alpha$-vortex or
\item a subset of $V(G_{t,0})$ that induces in $G_{t,0}$ a $K_1$, a $K_2$ or a boundary triangle (i.e., it bounds a disk that is a face of $G'_{t,0}$ in~$\Sigma$.)
\end{enumerate}
such that $V_t\cap V_{t'}\subseteq X\cup Z'_t$.
\end{enumerate}
\end{enumerate}
Further, $Z\subseteq Z'_{t''}$.
%
%
\end{theorem}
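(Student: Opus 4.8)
The plan is to deduce Theorem~\ref{thm:extended} from the structure theorem for odd $K_k$-minor-free graphs by combining the Robertson--Seymour near-embedding theorem (Theorem~\ref{thm:3.1} in the form of \cite{DKMW}) with a tangle/separation analysis. The starting point is that a graph $G$ with no odd $K_k$-model, by the results of \cite{DKM}, already has a tree-decomposition in which every torso is, after deleting at most $\alpha$ apex vertices, either near-embeddable in a bounded-genus surface or bipartite. The work is to upgrade this to the ``$\delta$-rich'' version, to control the adhesion sets $V_t\cap V_{t'}$ so that they sit inside $X\cup Z_t'$ with $X$ of the prescribed local shape, and to arrange that the prescribed set $Z$ is swallowed by the root bag's apex set $Z_{t''}'$.

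\medskip
\noindent\textbf{Step 1 (tangles and the recursion).} First I would set up the standard recursive decomposition along highly connected pieces: for the graph $G$ (with the fixed $Z$ distinguished), either $G$ has small ``adhesion-bounded'' tree-width, in which case the decomposition is trivial and every bag is a $K_1$/$K_2$-glued piece (falling under case 1 or a degenerate case~2), or $G$ has a tangle $\mathfrak{T}$ of order $\geq w=w(k,\delta)$. In the tangled case, I apply the odd-minor structure theorem of \cite{DKM} relative to $\mathfrak{T}$ to the big side: the big side is, after removing $\leq\alpha$ apices $Z_t'$, either bipartite (case~1) or controlled-near-embeddable. For the near-embeddable case I invoke Theorem~\ref{thm:3.1}, using the footnote there that $\delta$ may be chosen as a function of $\alpha$, to get an $\alpha$-near, $\delta$-rich embedding $(\sigma_t,G_{t,0},Z_t',\V_t,\emptyset)$ controlled by $\mathfrak{T}$; the footnoted strengthening is exactly what lets $\delta$-richness be a uniform hypothesis downstream. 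The recursion is then applied to each small side with its inherited separation boundary playing the role of the new $Z$. Standard arguments (as in \cite{RS16}) show this recursion terminates and produces a genuine rooted tree-decomposition.

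\medskip
\noindent\textbf{Step 2 (controlling adhesion sets).} The heart of the bookkeeping is clause 2(b): every adhesion $V_t\cap V_{t'}$ to a child must lie in $Z_t'\cup X$ with $X$ either two consecutive parts of a large vortex or a $K_1$/$K_2$/boundary-triangle in $G_{t,0}$. Here I use that the separations we recurse on have order $< w$, hence are ``small'' with respect to the tangle, so after subtracting $Z_t'$ their traces on the near-embedded part are confined; the flatness/richness of the embedding lets one push such a small separator to lie on a face boundary or inside a vortex disk, and a short argument (contracting along the linear decomposition of the vortex) shows it can be taken inside two consecutive bags of the linear decomposition — this is where Lemma~\ref{lem:omega} (linear decompositions of width/depth $\leq\alpha$) and property~2(a) on vortex width get used. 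The bipartite case~1 adhesion bound $|V_t\cap V_{t'}-Z_t'|\leq 1$ is obtained analogously: a bipartite big side has no useful internal structure to hide a large separator, so after removing the apices the trace is essentially a single vertex. Finally, $Z\subseteq Z_{t''}'$ is arranged by simply forcing the given $\leq 3\theta-2$ vertices into the apex set at the root (possible since the near-embedding theorem tolerates an additive bounded increase in the apex bound, which is what the parameter $\theta$ absorbs).

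\medskip
\noindent\textbf{The main obstacle} I expect is Step~2 — specifically, showing that the adhesion sets can be made to respect the face/vortex geometry of the near-embedding simultaneously for \emph{all} children, rather than one at a time, and that this is compatible with keeping the embedding $\delta$-rich. A single small separator can be routed onto a face boundary, but when many children must be glued one must argue that their separators are ``spread out'' on the surface (using the representativity and the distance conditions between vortices in the definition of $\delta$-richness), and that re-routing one does not disturb another. This requires a careful simultaneous argument, essentially an amalgamation of the Robertson--Seymour ``society'' manipulations, and is where most of the technical effort of the proof will go; the odd-minor-specific parts (the appearance of the bipartite alternative) ride along harmlessly on top of the minor-structure machinery since, as noted in Section~\ref{stodd}, the odd case only adds the second outcome to the decomposition.
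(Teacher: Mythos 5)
The paper does not actually contain a proof of Theorem~\ref{thm:extended}: immediately after stating it, the text says ``For the completeness, we shall give a proof of Theorem~\ref{thm:extended} in the appendix,'' but no appendix is present in the source, and the substance is deferred to \cite{DKM} and \cite{DKMW}. So there is no paper-internal proof to compare against; the judgment has to be on the merits of your sketch.

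Your high-level strategy---recurse on tangles, apply Theorem~\ref{thm:3.1} to the big side of a high-order tangle (using the footnote that lets $\delta$ depend on $\alpha$), recurse on small sides with the separator inherited as the new $Z$, and ride the bipartite alternative alongside the near-embedding alternative---is indeed the standard route, and it is the one \cite{DKM} and \cite{DKMW} follow. But what you have written is a plan, not a proof, and you concede this yourself in identifying Step~2 as ``where most of the technical effort of the proof will go.'' That is precisely the content of the theorem: the claim that \emph{all} adhesion sets $V_t\cap V_{t'}$ can be confined to $Z'_t$ together with an $X$ that is either two consecutive vortex bags or a $K_1$/$K_2$/boundary triangle, and that this can be done simultaneously over all children while keeping the embedding $\delta$-rich and with $\W_t=\emptyset$ (note: your sketch does not address eliminating small vortices, which in the statement must be absorbed into child bags rather than appear in the near-embedding). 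Waving at ``society manipulations'' and ``representativity spreads out the separators'' does not discharge these obligations. Two further points that need to be made precise: (i) the invariant $|Z|\leq 3\theta-2$ must be preserved when you pass the adhesion set of a child down as its new $Z$, which forces a relation between $\theta$, $\alpha$, and the vortex depth that you never pin down; and (ii) your claim that the bipartite alternative gives adhesion at most $1$ outside $Z'_t$ ``because a bipartite big side has no useful internal structure to hide a large separator'' is an assertion, not an argument---it relies on the specific way \cite{DKM} builds the bipartite pieces, which you would need to spell out.
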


In addition, given $k$, we can find either an odd $K_k$-model or such a tree-decomposition  in polynomial time \cite{DKM}. For the completeness,
we shall give a proof of Theorem \ref{thm:extended} in the appendix.

\section{List-Coloring Extensions in Planar Graphs}
\label{coloring extensions}
\showlabel{coloring extensions}

%

Let $G$ be a graph. A \DEF{list-assignment} is a function $L$ which assigns
to every vertex $v\in V(G)$ a set $L(v)$ of
natural numbers, which are called \DEF{admissible colors} for that vertex.
An \DEF{$L$-coloring} of $G$
is an assignment of admissible colors to all vertices
of $G$, i.e., a function $c:V(G)\to \NN$ such that $c(v)\in L(v)$ for every
$v\in V(G)$, and for every edge $uv$ we have $c(u)\ne c(v)$. If $k$ is an integer
and $|L(v)|\ge k$ for every $v\in V(G)$, then $L$ is a \DEF{$k$-list-assignment}.
The graph is \DEF{$k$-list-colorable} (or \DEF{$k$-choosable})
if it admits an $L$-coloring for every
$k$-list-assignment $L$. If $L(v)=\{1,2,\dots,k\}$ for every $v$, then
every $L$-coloring is referred to as a \DEF{$k$-coloring} of $G$. If $G$
admits an $L$-coloring ($k$-coloring), then we say that $G$ is \DEF{$L$-colorable}
(\DEF{$k$-colorable}).

The smallest integer $k$ such that $G$ is $k$-choosable
is the \DEF{list-chromatic number} $\chi_l(G)$.

In \cite{Th1}, Thomassen proved a result which is slightly stronger than
the statement that every planar graph is 5-list-colorable.
This is used in our proof.

\begin{theorem}[Thomassen \cite{Th1}]
\label{thm:Th5}
\showlabel{thm:Th5}
Let $G$ be a plane graph with outer facial walk $C$, and let $x,y$ be
adjacent vertices on $C$. Let $L$ be a list-assignment for $G$ such that
$L(x)=\{\a\}$, $L(y)=\{\b\}$, where $\b\ne \a$, every vertex on
$C\setminus\{x,y\}$ has at least three admissible colors, and every
vertex that is not on $C$ has at least five admissible colors.
Then $G$ can be $L$-colored.
\end{theorem}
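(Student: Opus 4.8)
The plan is to prove the statement by Thomassen's induction on $|V(G)|$, after normalizing $G$. First I would reduce to the case that $G$ is a $2$-connected plane graph in which $C$ is a cycle and every bounded face is a triangle (a \emph{near-triangulation}). Deleting vertices or edges never makes an $L$-colouring problem easier, so we may add edges inside the bounded faces until they are all triangles, keeping the edge $xy$ on the boundary of the outer face; moreover components and blocks not containing the edge $xy$ may be disposed of separately, since in them every vertex has a list of size at least $3$, which is enough on an outer walk and more than enough in the interior, and which is exactly where the precoloured vertices $x,y$ sit. So write the outer cycle as $C=v_1v_2\cdots v_p$ with $v_1=x$ and $v_2=y$, and argue by strong induction on $|V(G)|$; the cases $|V(G)|\le 3$ are immediate (there one just colours the at most one non-precoloured vertex, whose list has size at least $3$ but at most two forbidden colours). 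Now distinguish two cases according to whether $C$ has a chord.

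\textbf{Chord case.} If $v_iv_j$ is a chord of $C$, it splits $G$ into two near-triangulations $G_1,G_2$ with $G_1\cup G_2=G$ and $G_1\cap G_2$ equal to the edge $v_iv_j$, where $v_1v_2\in E(G_1)$ say; both are strictly smaller than $G$. First apply the induction hypothesis to $G_1$ with the precoloured adjacent pair $x,y$ and the restriction of $L$; this yields an $L$-colouring of $G_1$ in which $v_i$ and $v_j$ (being adjacent) get distinct colours. Regarding these two colours as a precolouring of the edge $v_iv_j$, which lies on the outer cycle of $G_2$, apply the induction hypothesis to $G_2$. The two colourings agree on $v_iv_j$, so their union is an $L$-colouring of $G$.

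\textbf{Chordless case.} Let $v_{p-1},u_1,\dots,u_m,v_1$ be the neighbours of $v_p$ listed in their cyclic order around $v_p$; since every bounded face of $G$ is a triangle, consecutive ones in this list are joined by edges, and since $C$ has no chord each $u_i$ is an interior vertex, so $|L(u_i)|\ge 5$. Because $|L(v_p)|\ge 3$ and $v_1$ is precoloured, choose two distinct colours $c_1,c_2\in L(v_p)$ different from the colour of $v_1$. Delete $v_p$ and remove $c_1,c_2$ from the list of every $u_i$; in $G-v_p$ the outer cycle becomes $v_1v_2\cdots v_{p-1}u_m\cdots u_1v_1$, each $u_i$ now has a list of size at least $3$ and lies on this outer cycle, all other lists are unchanged, and $v_1,v_2$ are still a precoloured adjacent pair on the outer cycle. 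By the induction hypothesis $G-v_p$ has an $L$-colouring of this modified instance. Finally colour $v_p$: each of its neighbours other than $v_{p-1}$ avoids both $c_1$ and $c_2$, and $v_{p-1}$ uses at most one of them, so at least one of $c_1,c_2$ is admissible for $v_p$. This closes the induction.

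The only genuinely delicate point is the normalization step: one must verify that splitting off blocks and other components, and triangulating faces while leaving $xy$ on the outer facial walk, is always possible and never weakens the hypotheses — in particular that $x$ and $y$ remain an adjacent pair on the outer cycle of the block on which the induction is actually run. Once this is in place, the two-colour reservation trick in the chordless case (keeping two colours off the interior neighbours of the removed outer vertex, so that their lists still have size $3$ on the new outer cycle) is exactly what makes the induction close, and it is the heart of the argument; everything else is routine plane-graph bookkeeping.
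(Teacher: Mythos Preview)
Your argument is correct: this is exactly Thomassen's original induction from \cite{Th1}, with the standard chord/no-chord dichotomy and the two-colour reservation trick at the vertex $v_p$ opposite the precoloured edge. The paper does not give its own proof of this statement --- it is quoted as a known result attributed to Thomassen --- so there is nothing to compare against beyond noting that what you have written is the canonical proof.
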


Theorem \ref{thm:Th5} has the following useful corollary.

\begin{corollary}
\label{cor:Th5}
\showlabel{cor:Th5}
Let $G$ be a plane graph whose outer face $C$ is either a 3-face or a 4-face.
Let $L$ be a $5$-list-assignment for $G$. Then every $L$-coloring
of $C$ can be extended to an $L$-coloring of the whole graph $G$.
\end{corollary}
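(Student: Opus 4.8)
The plan is to reduce the statement to Theorem \ref{thm:Th5} by a small case analysis on the size of the outer face. Suppose first that the outer face $C$ is a $3$-face, with vertices $x,y,z$ in cyclic order, and let $c$ be an $L$-coloring of $C$ (so $c(x),c(y),c(z)$ are pairwise distinct admissible colors). Consider the graph $G$ with the modified list-assignment $L'$ obtained by setting $L'(x)=\{c(x)\}$, $L'(y)=\{c(y)\}$, and $L'(v)=L(v)$ for all other vertices $v$. Since $x$ and $y$ are adjacent on $C$ and $c(x)\ne c(y)$, and since every vertex of $C\setminus\{x,y\}$ (namely $z$, plus whatever other vertices lie on the outer walk, but for a $3$-face $C$ is just the triangle $xyz$) still has at least three admissible colors in $L'$ — indeed $z$ has $|L(z)|\ge 5\ge 3$ — and every interior vertex has at least five admissible colors, Theorem \ref{thm:Th5} applies directly and yields an $L'$-coloring of $G$. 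Any $L'$-coloring is in particular an $L$-coloring of $G$ that restricts to $c$ on $\{x,y\}$; but we must also check it restricts to $c$ on $z$. This is \emph{not} automatic, so the argument needs one more idea.

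The fix is to precolor $z$ as well: delete $z$ from $G$, or rather absorb the constraint from $z$ into the lists of its neighbors. Concretely, let $G^- = G - z$ and let $L^-$ be the list-assignment on $G^-$ that agrees with $L$ except that we remove the color $c(z)$ from the list of every neighbor of $z$ in $G$. Each such neighbor that was interior in $G$ now has at least $4$ admissible colors; each such neighbor lying on the outer walk of $G^-$ needs only $3$. The outer walk of $G^-$ is obtained from the triangle $xyz$ by replacing the two edges $xz,zy$ with the walk around the former interior of $G$ that was ``behind'' $z$; its vertices $x$ and $y$ are still adjacent (via the edge $xy$, which remains on the outer walk of $G^-$), we set $L^-(x)=\{c(x)\}$, $L^-(y)=\{c(y)\}$, and every other vertex on this outer walk is a neighbor of $z$ or was interior, hence has at least $3$ admissible colors in $L^-$. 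Interior vertices of $G^-$ keep at least $5$ colors (they are not neighbors of $z$, or if they are, they get $4$, which still is not $5$) — so the cleanest route is instead to keep $z$ in the graph and precolor it: apply Theorem \ref{thm:Th5} to $G$ with lists $L'(x)=\{c(x)\}$, $L'(y)=\{c(y)\}$, $L'(z)=\{c(z)\}$, $L'(v)=L(v)$ otherwise, after first noting that the hypothesis of the theorem only requires two adjacent \emph{specified} vertices on $C$; a vertex whose list has been shrunk to a singleton still has ``at least three admissible colors'' only if we are careful. Since this is false for $z$, I would instead invoke the standard strengthening: delete $z$ and replace $G$ by the plane graph $G - z$ with outer walk through the neighborhood of $z$, removing $c(z)$ from the lists of $N(z)$; interior vertices then have $\ge 4 \ge$ the needed amount only after observing Theorem \ref{thm:Th5} is usually stated and used in the stronger ``$\ge 3$ on the boundary, $\ge 5$ inside, remove one color'' form — and in fact the cleanest correct reduction is: contract the edge $xy$ is not legal, so the honest approach is the one below.

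Here is the approach I would actually carry out. Root the induction/reduction on $|V(G)|$. For the $3$-face case with precoloring $c$ on $\{x,y,z\}$: pick the vertex $z$; it has some set $N(z)$ of neighbors, all either interior or on $C$ (but $C=\{x,y,z\}$, so $N(z)\cap C = \{x,y\}$). Form $G' := G-z$; it is a plane graph whose outer facial walk $C'$ consists of the edge $xy$ together with the path(s) through $N(z)\setminus\{x,y\}$ along the boundary of the face vacated by $z$. Assign lists: $L'(x)=\{c(x)\}$, $L'(y)=\{c(y)\}$; for $v\in N(z)\setminus\{x,y\}$, set $L'(v)=L(v)\setminus\{c(z)\}$, so $|L'(v)|\ge 4$; for all other $v$, $L'(v)=L(v)$. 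Now $x,y$ are adjacent on $C'$ with $c(x)\ne c(y)$; every vertex of $C'\setminus\{x,y\}$ lies in $N(z)$ so has $\ge 4\ge 3$ admissible colors; every interior vertex of $G'$ is interior in $G$ and not in $N(z)$... except possibly interior vertices of $G$ adjacent to $z$ — but those are in $N(z)$, and when we remove $z$ they remain interior in $G'$ with only $4$ colors, \emph{violating} the $\ge 5$ requirement. So the genuinely correct move, and the one I expect the author uses, is to keep $z$ and note that $z$ is adjacent to $x$: apply Theorem \ref{thm:Th5} with the \emph{pair} $(z,x)$ as the two precolored adjacent boundary vertices, lists $L'(z)=\{c(z)\}$, $L'(x)=\{c(x)\}$, $L'(v)=L(v)$ for $v\ne z,x$. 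Then $y\in C\setminus\{z,x\}$ has $|L(y)|\ge 5\ge 3$ admissible colors — but we need the final coloring to use $c(y)$ at $y$, which Theorem \ref{thm:Th5} does not guarantee. This circularity is exactly the main obstacle: \textbf{the main difficulty is that Theorem \ref{thm:Th5} lets us prescribe only two boundary colors, whereas a $3$- or $4$-face has three or four vertices whose colors are all prescribed.} I expect the resolution to be: triangulate the interior of $G$ near the boundary if needed, then use the well-known fact (a routine strengthening of Thomassen's proof, or an easy induction) that one may additionally prescribe the colors of \emph{all} vertices of a single face of size $\le 4$ provided the remaining boundary vertices keep $\ge 3$ colors and interior vertices $\ge 5$; concretely, delete the precolored face, push its colors into the neighbors' lists (reducing them from $5$ to at least $3$ for former-interior neighbors only if the interior is triangulated so each interior vertex loses at most $2$ colors from the at-most-$2$ precolored neighbors it can see across a triangulated $4$-face), and apply Theorem \ref{thm:Th5} to each resulting piece. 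I would present the $3$-face case in full and remark that the $4$-face case $C = x_1x_2x_3x_4$ is identical after adding a chord (say $x_1x_3$) if absent — legal since we may add edges inside a face — reducing to two triangular faces, and handling each by the $3$-face argument with consistent precoloring along the shared chord.

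\medskip
\noindent\emph{(Author's proof follows.)}
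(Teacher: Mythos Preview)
The paper does not actually give a proof of this corollary; it simply states that ``A proof is given in \cite{DKM}.'' So there is no in-paper argument to compare against, only the standard folklore reduction to Thomassen's theorem.

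Your proposal contains the correct idea, but you talk yourself out of it through a mistaken claim. In the $3$-face case $C=xyz$, the approach you try and then abandon --- delete $z$, set $L'(x)=\{c(x)\}$, $L'(y)=\{c(y)\}$, and remove $c(z)$ from the list of every neighbour of $z$ --- is in fact correct. Your objection is that ``interior vertices of $G$ adjacent to $z$ \dots\ remain interior in $G'$ with only $4$ colors''. This is false: $z$ lies on the outer face, so deleting $z$ merges every face incident to $z$ (including the outer face) into one face, which becomes the new outer face of $G-z$. Hence \emph{every} neighbour of $z$ lies on the outer facial walk of $G-z$; none stays interior. Each such neighbour has at least $5-1=4\ge 3$ admissible colors, each genuinely interior vertex of $G-z$ is not a neighbour of $z$ and keeps $\ge 5$ colors, and $x,y$ are adjacent on the new outer walk with distinct prescribed colors. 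Theorem~\ref{thm:Th5} applies directly to $G-z$; reinstating $z$ with color $c(z)$ gives the desired $L$-coloring of $G$.

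The $4$-face case $C=x_1x_2x_3x_4$ is handled by the same trick, deleting \emph{two} adjacent boundary vertices, say $x_3$ and $x_4$: set $L'(x_1)=\{c(x_1)\}$, $L'(x_2)=\{c(x_2)\}$, and remove $c(x_3)$ (resp.\ $c(x_4)$) from the lists of neighbours of $x_3$ (resp.\ $x_4$). Every neighbour of $x_3$ or $x_4$ lies on the outer walk of $G-\{x_3,x_4\}$ (same merging argument, applied twice), and a common neighbour loses at most two distinct colors since $c(x_3)\ne c(x_4)$, leaving $\ge 3$. Interior vertices are unaffected. Theorem~\ref{thm:Th5} then applies. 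Your alternative of adding a chord $x_1x_3$ and splitting into two triangles is not cleanly justified: such a chord drawn in the outer (unbounded) face does not straightforwardly reduce the problem to two independent $3$-face instances, and your ``triangulate the interior near the boundary'' step is left vague. Stick with the deletion argument --- it is the standard one, and you had it.
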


A proof is given in \cite{DKM}.

\drop{
Moreover, the following lemma, which is very useful for
our proof, was proved in \cite{DKM}.

\begin{lemma}
\label{lem:non5}
\showlabel{lem:non5}
Let $G$ be a plane graph whose outer face $C$ is a cycle of length at most $6$.
Let $L$ be a $5$-list-assignment for $G$.
Then every $L$-coloring
of $C$ can be extended to an $L$-coloring of the whole graph $G$ unless
$G$ has a subgraph
as shown in
Figure \ref{fig:4} such that its outer face boundary is $C$.
\end{lemma}

\begin{figure}[htb]
\epsfxsize5cm
\centerline{\epsffile{5list4.eps}}
\caption{Short contractible cycles}
\label{fig:4}
\end{figure}

Let us observe that all the graphs shown in Figure \ref{fig:4}, except for the second one,
have a vertex of degree at most
5 inside the outer face boundary $C$.}

\section{Minimal counterexample to the odd Hadwiger's conjecture}

In this section, we shall look at minimal counterexamples to the odd Hadwiger's conjecture, and
give some basic properties about them. Let us remind some notations.

Let $G$ be a graph satisfying the following conditions:
\begin{enumerate}
\item[\rm (1)]
$G$ is $t$-chromatic.
\item[\rm (2)]
$G$ is minimal with respect to the odd-minor-operation in the class of all
$t$-chromatic graphs.
\item[\rm (3)]
$G$ does not contain $K_t$ as a minor.
\end{enumerate}


We call such a graph $G$ a {\it minimal counterexample to the odd Hadwiger's conjecture} for the case $t$.
The odd Hadwiger's conjecture suggests there are {\it no} such graphs.
It is easy to see that $G$ has minimum degree at least $t-1$
and we generalize this result in Lemma \ref{mind1}.
We only consider cases $t \geq 6$, as other cases are already solved (see Subsection 1.2).

We need the following easy fact, whose proof is omitted.
\begin{fact}
\label{easy1}
\showlabel{easy1}
Let $u,v$ be two vertices of a given graph $G$. If $G$ does not contain an odd path with two endvertices $u,v$ and $G-\{u,v\}$  is connected,
then $G-\{u,v\}$ can be written as $W \cup B_1 \cup \dots \cup B_l$ (for some $l$) such that
$W$ induces a bipartite graph $(A, B)$ in $G$, $B_i$ is not bipartite, $|B_i \cap W| \leq 1$ for each $i$, for any $i, j$
with $i \not=j$ $|B_i \cap B_j| \leq 1$, and both $u$ and $v$ have neighbors only in $A$ or $B$.
\end{fact}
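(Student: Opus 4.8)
\medskip
\noindent\textbf{Proof proposal.}
The plan is to reduce the statement to the block--cut tree structure of $H:=G-\{u,v\}$ together with a parity analysis. First I would record the routine translation of the hypothesis. Since a single edge is an odd path, $uv\notin E(G)$; and every $u$--$v$ path in $G$ other than that nonexistent edge has the form $u,a,P,b,v$, where $a$ is a neighbour of $u$, $b$ a neighbour of $v$ (both then lying in $H$), and $P$ is an $a$--$b$ path in $H$, of length $|P|+2$. Hence ``$G$ has no odd $u$--$v$ path'' is equivalent to: for every neighbour $a$ of $u$ and every neighbour $b$ of $v$, \emph{every} $a$--$b$ path in $H$ is even. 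We may assume $u$ and $v$ are not isolated, so that these neighbour sets $N(u),N(v)$ (in $H$, equivalently in $G$) are nonempty; otherwise the statement is trivial.

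The engine of the proof is the folklore fact, which I would establish separately, that in a $2$-connected non-bipartite graph $B$ any two distinct vertices $p,q$ are joined by both an even and an odd path. Indeed, $p$ and $q$ lie on a common cycle $D$ by $2$-connectivity, giving two internally disjoint $p$--$q$ paths of lengths summing to $|D|$; if $|D|$ is odd we are done, so assume every cycle through $p$ and $q$ is even. Fix an odd cycle $C$ (it exists, $B$ being non-bipartite); a fan/Menger argument in the $2$-connected graph $B$ yields vertex-disjoint paths from $p$ and from $q$ meeting $C$ only in distinct endpoints $p',q'$, and the two arcs of $C$ between $p'$ and $q'$ have opposite parities (their lengths sum to the odd number $|C|$), so splicing them onto the two paths produces two $p$--$q$ paths of opposite parity.

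Next I would pass to the block--cut tree $\mathbb T$ of the connected graph $H$, calling a block \emph{odd} if it is $2$-connected and non-bipartite (so bridges and $2$-connected bipartite blocks are not odd). Any $a$--$b$ path in $H$ traverses exactly the blocks lying on the $\mathbb T$-path between the block of $a$ and the block of $b$, entering and leaving each such block at distinct cut vertices; by the engine lemma, if any of these blocks were odd we could reroute within it to obtain an odd $a$--$b$ path, contradicting the hypothesis. Thus for all neighbours $a$ of $u$ and $b$ of $v$, the corresponding $\mathbb T$-path contains no odd block. Let $\Pi$ be the union of $N(u)\cup N(v)$ with all blocks occurring on some such $\mathbb T$-path. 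Then $\Pi$ is connected and is a union of bipartite blocks, hence bipartite; moreover every relevant $a$--$b$ path stays inside $\Pi$, so the parity hypothesis forces $a$ and $b$ to lie in the same class of a fixed bipartition $(A,B)$ of $\Pi$, and hence all of $N(u)\cup N(v)$ lies on one side $A$.

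Finally I would assemble the decomposition. Root $\mathbb T$ at a block of $\Pi$ (or at the single vertex to which $\Pi$ degenerates). For a block $Q$, let $\rho(Q)$ be its lowest non-bipartite ancestor in the rooted $\mathbb T$, if one exists (so $\rho(Q)=Q$ when $Q$ is odd). Put $W:=\Pi\cup\bigcup\{Q:\rho(Q)\text{ undefined}\}$ and, for each odd block $B'$, put $B_i:=\bigcup\{Q:\rho(Q)=B'\}$; these are pairwise disjoint on block-nodes and together cover $H$. Each of $W,B_1,\dots,B_l$ spans a connected subtree of $\mathbb T$: $W$ is a union of bipartite blocks, hence bipartite with a bipartition $(A,B)$ extending the one above, while each $B_i$ contains the non-bipartite block $B'$ and is therefore non-bipartite. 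Since no block-node lies in two of these pieces, any two of them can share only cut vertices, and they cannot share two of them — the unique $\mathbb T$-path between two shared cut vertices would consist of blocks belonging to both. This gives $|B_i\cap W|\le 1$ and $|B_i\cap B_j|\le 1$, and $N(u)\cup N(v)\subseteq\Pi\subseteq W$ lies on side $A$, establishing the last clause. The main obstacle is the engine lemma together with making the block-tree rerouting airtight (in particular when $a$ or $b$ is itself a cut vertex), and verifying that this distribution of the bipartite blocks between $W$ and the $B_i$'s leaves every $B_i$ non-bipartite while preserving the single-vertex intersections.
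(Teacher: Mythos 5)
The paper states Fact~\ref{easy1} with the proof omitted, so there is no argument of the author's to compare against; what follows assesses your proposal on its own. Your block--cut-tree strategy and your ``engine lemma'' (both parities realised between any two distinct vertices of a $2$-connected non-bipartite graph) are correct and well chosen, and the bulk of the construction --- the parity translation, the rerouting across odd blocks, the connectedness and one-sidedness of $N(u)\cup N(v)$ inside $\Pi$, and the $\rho$-labelling that partitions the blocks into $W$ and the $B_i$ with single-vertex overlaps --- goes through.

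There is, however, a genuine gap at the degenerate boundary. Your claim that every $\mathbb{T}$-path between $a\in N(u)$ and $b\in N(v)$ carries no odd block is established only for $a\ne b$: when $a=b=w$ the sole $w$--$w$ path is trivial, the engine lemma has nothing to reroute, and if $N(u)=N(v)=\{w\}$ with $w$ not a cut vertex then your $\Pi$ contains the (possibly odd) block $Q_w$ of $w$ and may fail to be bipartite, which in turn breaks your choice of root and the bipartiteness of $W$. Concretely, let $H$ be a triangle $wxy$ with $N(u)=N(v)=\{w\}$; the fact holds (take $W=\{w\}$ and $B_1=H$) but your $\Pi$ is the whole non-bipartite triangle. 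The repair is routine --- build $\Pi$ only from pairs with $a\ne b$ and treat $|N(u)\cup N(v)|=1$ separately --- but as written the construction does not close. A lesser point: your remark that the statement is ``trivial'' when $u$ or $v$ is isolated is not safe; for $G=K_5\sqcup\{u\}$ and $v\in K_5$ the hypothesis holds vacuously yet $N(v)=V(H)$ cannot sit inside one colour class of a bipartite $W\subseteq K_4$, so the fact should be read with the implicit assumption that both $u$ and $v$ have neighbours rather than as a harmless reduction one may dismiss.
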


Here are a few easy facts. So we omit the proofs.

\begin{fact}
\label{easy2}
\showlabel{easy2}
Let $(A, B)$ be a separation of a given graph $G$ and let $u, v \in A \cap B$.
If $uv \not\in E(G)$, then $B-A$ can be reduced onto $A \cap B$, via the odd-minor-operations,
with adding the edge $uv$,
unless there is no odd path between $u$ and $v$ in $(B-A) \cup \{u,v\}$, in which case,
$(B-A) \cup \{u,v\}$ can be written as $W \cup B_1 \cup \dots \cup B_l$ (for some $l$) such that
$W$ induces a bipartite graph $(A', B')$, $B_i$ is not bipartite, $|B_i \cap W| \leq 1$ for each $i$, for any $i, j$ with $i \not=j$
$|B_i \cap B_j| \leq 1$, and both $u$ and $v$ have neighbors only in $A'$ or $B'$.
\end{fact}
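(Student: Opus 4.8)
The plan is to prove the stated dichotomy by handling its two horns separately: when $(B-A)\cup\{u,v\}$ contains an odd path between $u$ and $v$ I would exhibit an explicit odd-minor-operation that deletes $B-A$ and creates the edge $uv$, and when it does not I would invoke Fact~\ref{easy1}.

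For the first case, suppose $(B-A)\cup\{u,v\}$ contains an odd path $P=q_0q_1\cdots q_m$ with $q_0=u$, $q_m=v$, $m$ odd, and all internal vertices in $B-A$. I would operate only on the side $B$: first delete every edge of $G[B]$ incident with $B-A$ except the edges of $P$, and discard the isolated vertices of $B-A$ that are left off $P$ (they are irrelevant to colorings and minors). Now the only surviving edges meeting $B-A$ are those of $P$, so $E(P)\setminus\{q_0q_1\}$ is an edge cut of the current graph; indeed it is $\delta(S)$ for $S=(A\cap B)\cup\{q_1,q_3,\dots,q_{m-2}\}$, where the parity of $m$ is used. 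Contracting this cut identifies $q_1,\dots,q_{m-1}$ with $v$ and turns the surviving edge $q_0q_1$ into a new edge $uv$, so $G[B]$ is transformed into $G[A\cap B]$ together with the edge $uv$; reattaching $G[A]$ gives $G^\ast=G[A]+uv$, and this is what the statement means by reducing $B-A$ onto $A\cap B$ with the edge $uv$ added. Since this is a legitimate odd-minor-operation, $G^\ast$ is an odd minor of $G$, and strictly smaller because $B-A\neq\emptyset$ (as $(A,B)$ is a separation). Concretely one sees $G^\ast$ is an odd minor of $G$ via the model whose branch set for $v$ is $\{v,q_1,\dots,q_{m-1}\}$ (connected along $P$), whose other branch sets are singletons, and whose witnessing two-coloring is constant on $V(A)$ and alternates along $P$; this is consistent precisely because the sub-path of $P$ from $q_1$ to $v$ has even length $m-1$, which is also what makes $q_0q_1$ a valid monochromatic connector for $uv$. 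By transitivity of the odd-minor relation, $G^\ast$ inherits from $G$ the absence of an odd $K_t$-minor.

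For the second case, suppose $(B-A)\cup\{u,v\}$ has \emph{no} odd path between $u$ and $v$. Put $H=(B-A)\cup\{u,v\}$, regarded as a graph with distinguished vertices $u,v$. Then $H$ has no odd $u$-$v$ path and, assuming $B-A$ is connected (otherwise one applies the following to each component of $B-A$ and combines), $H-\{u,v\}=B-A$ is connected, so Fact~\ref{easy1}, applied to $H$, yields $B-A=W\cup B_1\cup\cdots\cup B_l$ with $W$ inducing a bipartite graph $(A',B')$, each $B_i$ non-bipartite, $|B_i\cap W|\le1$, $|B_i\cap B_j|\le1$ for $i\neq j$, and each of $u,v$ having all its neighbors in one of $A',B'$; adjoining $u$ and $v$ to $W$ gives the asserted decomposition of $(B-A)\cup\{u,v\}$. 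For intuition, Fact~\ref{easy1} itself is most naturally obtained from the block-cut tree of $H+uv$: the block $K$ carrying the edge $uv$ must be bipartite, since every edge of a $2$-connected non-bipartite graph lies on an odd cycle, and such a cycle would give an odd $u$-$v$ path in $H$; one then forms $W$ from $K$ and the bipartite blocks reachable from it through bipartite blocks, takes each residual non-bipartite block (together with the part of $H$ it cuts off) as a $B_i$, and the two intersection bounds follow because distinct blocks meet in at most one vertex and the unique block-cut-tree path between two blocks of $W$ misses every $B_i$; the no-odd-path hypothesis is also what forces every neighbor of $u$ and of $v$ into a bipartite block, hence into $W$.

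I expect the main obstacle to lie not in the construction of the first case but in its interface with the technical notion of a \emph{reduction}: showing that the operation producing $G^\ast=G[A]+uv$, used within the reduction-via-separation scheme recalled earlier (where the opposite side of the separation is simultaneously color-reduced), genuinely prevents $G$ from being a minimal counterexample. The odd length of $P$ is exactly what makes this work: it is needed both to see that $G^\ast$ is an odd minor of $G$ and to extend a $(t-1)$-coloring across $P$ in a parity-consistent way. Everything else is routine: the cut-contraction is a bona fide odd-minor-operation, and the second case is a direct appeal to Fact~\ref{easy1}, leaving only the minor bookkeeping for disconnected $B-A$ and for where $u,v$ sit in the decomposition.
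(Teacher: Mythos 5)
Your proof is correct and matches the intent of the statement; since the paper omits the proofs of Facts~\ref{easy1}--\ref{easy3} entirely (``Here are a few easy facts. So we omit the proofs.''), there is no paper argument to compare against, but your construction is the natural one. For the first horn, the deletion of all edges of $G[B]$ incident with $B-A$ except those of the odd path $P$, followed by the contraction of the edge-cut $E(P)\setminus\{q_0q_1\}$, is exactly a single application of the odd-minor-operation as defined in the paper, and it yields $G[A]+uv$ as desired; the parity of $m$ correctly makes $q_0q_1$ a monochromatic connector while the node $\{q_1,\dots,q_m\}$ is bichromatic. For the second horn, specializing Fact~\ref{easy1} to $H=(B-A)\cup\{u,v\}$ is the right move, and your block-tree sketch of why Fact~\ref{easy1} holds is sound (a non-bipartite block containing $uv$ would give an odd $u$--$v$ path on removing $uv$).

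Two small points worth tightening. First, your set $S=(A\cap B)\cup\{q_1,q_3,\dots,q_{m-2}\}$ is an edge-cut side only \emph{inside} $G[B]$; viewed in the whole (edge-deleted) graph $G$, $\delta(S)$ also contains all surviving edges between $A\cap B$ and $A-B$, which you do not want to contract. You should instead take $S=V(A)\cup\{q_1,q_3,\dots,q_{m-2}\}$ (equivalently, its complement $\{q_2,q_4,\dots,q_{m-1}\}$), which gives $\delta(S)=E(P)\setminus\{q_0q_1\}$ in $G$ since the separation $(A,B)$ ensures no edge runs from $A-B$ to $B-A$. Second, the reduction of the disconnected-$(B-A)$ case to Fact~\ref{easy1} applied component-wise needs one extra sentence: you must observe that within each component $C_i$ the bipartition of the bipartite part can be chosen so that the neighbors of $u$ and of $v$ land in a single consistent side, and that the absence of an odd $u$--$v$ path through $C_i$ forces this to be possible; otherwise the union of the per-component bipartitions could split $v$'s neighborhood between the two sides. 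Both are genuinely minor, and neither changes the overall approach.
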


\begin{fact}
\label{easy3}
\showlabel{easy3}
Let $(A, B)$ be a separation of a given graph $G$ and let $u, v \in A \cap B$.
If $uv \not\in E(G)$, then $B-A$ can be reduced onto $A \cap B$, via the odd-minor-operations,
with identifying $u$ and $v$,
unless there is no even path between $u$ and $v$ in $(B-A) \cup \{u,v\}$, in which case, $(B-A) \cup \{u,v\}$ can be written as $W \cup B_1 \cup \dots \cup B_l$ (for some $l$) such that
$W$ induces a bipartite graph $(A',B')$, $B_i$ is not bipartite, $|B_i \cap W| \leq 1$ for each $i$, for any $i, j$ with $i \not=j$
$|B_i \cap B_j| \leq 1$, and $u$ has neighbors only in $A'$ ($B'$, resp.), but $v$ has neighbors only in $B'$ ($A'$, resp.)
\end{fact}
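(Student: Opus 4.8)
The statement splits into a positive half (when an even $u$--$v$ path exists) and a negative half (when none does), and the plan is to treat each by a short argument mirroring Facts~\ref{easy1} and~\ref{easy2}.

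For the positive half, suppose $(B-A)\cup\{u,v\}$ contains an even $u$--$v$ path $P=p_0p_1\cdots p_{2k}$ with $p_0=u$, $p_{2k}=v$; here $2k\ge 2$ since $uv\notin E(G)$, and every internal vertex $p_i$ lies in $B-A$ because $P$ meets $A\cap B$ only in its endpoints. Every neighbour in $G$ of a vertex of $B-A$ lies in $B$, so deleting all edges of $B$ that are neither on $P$ nor in $G[A\cap B]$ is a sequence of odd-minor-operations that leaves $A$ untouched and after which each internal $p_i$ has degree exactly $2$, with neighbours $p_{i-1},p_{i+1}$. Now, for $i=1,3,\dots,2k-1$ in turn, I would contract the edge cut $\delta(\{p_i\})=\{p_{i-1}p_i,\,p_ip_{i+1}\}$; an immediate induction shows that after all these contractions $p_0,p_1,\dots,p_{2k}$ have been merged into one vertex, so in particular $u$ and $v$ have been identified, while $G[A\cap B]$ survives and all of $B-A$ has been absorbed or (for leftover isolated vertices, deletable since a pendant edge is a one-edge cut) removed. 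This is precisely a reduction of $B-A$ onto $A\cap B$ identifying $u$ and $v$. The parity of $P$ is used exactly here: had $P$ been odd, the same alternating contractions would have left $v$ as a separate vertex joined by a \emph{new} edge to the merged vertex containing $u$, i.e.\ they would have added the edge $uv$ --- the mechanism behind Fact~\ref{easy2}.

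For the negative half, let $H:=(B-A)\cup\{u,v\}$ and assume $H$ has no even $u$--$v$ path. The key ingredient is the classical fact that a $2$-connected non-bipartite graph contains both an even and an odd path between any two of its vertices. I would pass to the block-cut tree of the component of $H$ containing $u$ (if $u,v$ lie in different components the bookkeeping below is trivial) and let $\beta_1,\dots,\beta_m$ be the blocks on the path joining the block of $u$ to the block of $v$. If some $\beta_i$ were non-bipartite, the key fact would let us realise the segment of a $u$--$v$ path through $\beta_i$ with either parity, hence build an even $u$--$v$ path; so every $\beta_i$ is bipartite. Their bipartitions agree on the shared cut vertices, so $W_0:=\beta_1\cup\cdots\cup\beta_m$ is bipartite; write its bipartition as $(A',B')$. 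Since all $u$--$v$ paths then have the same parity and, by hypothesis, none is even, $u$ and $v$ fall into opposite classes, say $u$ on the $B'$-side and $v$ on the $A'$-side. Finally, $H$ is obtained from $W_0$ by attaching, at single cut vertices, a forest of further blocks: absorb each maximal branch that contains a non-bipartite block into one set $B_i$ (pulling the bipartite blocks of that branch into $B_i$ as well), and let $W$ be the union of the remaining, bipartite, blocks. Then $H=W\cup B_1\cup\cdots\cup B_l$ with $W$ bipartite with bipartition extending $(A',B')$, each $B_i$ non-bipartite, $|B_i\cap W|\le 1$ and $|B_i\cap B_j|\le 1$ (distinct blocks share at most one vertex), and $u,v$ retain all their neighbours inside $W_0\subseteq W$ on opposite sides --- exactly the asserted structure.

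The only place demanding care is the block-cut-tree bookkeeping in the negative half: one has to choose precisely which bipartite blocks are absorbed into a non-bipartite branch so that $|B_i\cap W|\le 1$ holds literally. The substance is entirely in the alternating-edge-cut contraction of the positive half (which is where the even/odd dichotomy with Fact~\ref{easy2} surfaces) together with the two-parity lemma for $2$-connected non-bipartite graphs; everything else is direct verification, which is why this is listed among the "easy facts".
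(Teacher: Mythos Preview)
The paper does not give a proof of this statement: it is introduced with ``Here are a few easy facts.\ So we omit the proofs.'' Your write-up is therefore not competing against anything, and it supplies exactly the kind of argument the authors presumably had in mind. Both halves are sound.

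For the positive half, your alternating contraction of the single-vertex cuts $\delta(\{p_i\})$ along an even $u$--$v$ path is the right mechanism, and your remark that an odd path would instead leave an edge $uv$ (recovering Fact~\ref{easy2}) is the correct parity check. One cosmetic point: odd-minor-operations as defined in the paper do not delete isolated vertices, so the leftover vertices of $B-A$ off the path $P$ are, strictly speaking, still present after your operations. This is harmless for the intended use---``$B-A$ reduced onto $A\cap B$ with $u,v$ identified'' means that what survives on the $A$-side is $G[A\cap B]$ with $u$ and $v$ merged, and isolated vertices in the former $B-A$ do not interfere---but you could say so explicitly rather than invoking pendant-edge cuts.

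For the negative half, your use of the block--cut tree together with the standard fact that a $2$-connected non-bipartite graph contains $x$--$y$ paths of both parities is exactly how one proves Fact~\ref{easy1} as well, and your parity conclusion (that $u$ and $v$ land on opposite sides of the bipartition of $W$) is the only change relative to that companion fact. Your caveat about the bookkeeping is honest: one must decide, for each off-path branch of the block--cut tree, whether to fold it into $W$ (if every block in it is bipartite, extending the bipartition consistently through the cut vertices) or to make it a single $B_i$ (if it contains a non-bipartite block), and then $|B_i\cap W|\le 1$ and $|B_i\cap B_j|\le 1$ follow because distinct branches meet only in cut vertices of $W$. Spelling this choice out in one sentence would remove the only ambiguity.
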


\begin{lemma}
\label{mind1}
\showlabel{mind1}
Let $G$ be a minimal counterexample to the odd Hadwiger's conjecture for the case $t$. Then minimum degree is at least $t$. In addition, if $d(v) =(t-2)+l$ for some vertex $v \in V(G)$, then there is no
vertex set $N' \subset N(v)$ with at least $l+1$ vertices that are independent.
\end{lemma}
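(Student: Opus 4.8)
The plan is to derive both assertions of Lemma~\ref{mind1} from the single fact that a minimal counterexample $G$ admits no reduction, together with the classical observation that $G$ is vertex‑critical. The key gadget is the following odd‑minor‑operation applied at a vertex $v$: given a set $N'=\{u_0,\dots,u_s\}\subseteq N(v)$ of pairwise non‑adjacent neighbours of $v$, first delete every edge incident with $v$ except $vu_0,\dots,vu_s$, and then contract the edge cut $\partial(\{v\})=\{vu_0,\dots,vu_s\}$. This is an odd‑minor‑operation in the sense of Section~1.4, and it produces an odd minor $G'$ of $G$ in which $v$ and all of $u_0,\dots,u_s$ are merged into a single vertex $w$ whose neighbourhood is $\bigl(\bigcup_i N(u_i)\bigr)\setminus(\{v\}\cup N')$, using that $N'$ is independent. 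This is the same flavour of reduction used in Facts~\ref{easy2} and~\ref{easy3}.

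For the first statement, suppose $d(v)\le t-1$. If $d(v)\le t-2$, then since $G$ is a minimal $t$‑chromatic graph, $G-v$ is $(t-1)$‑colourable, and any such colouring uses at most $t-2$ colours on $N(v)$ and hence extends to $v$, contradicting $\chi(G)=t$; so $d(v)=t-1$. If $N(v)$ contains a non‑edge $v_iv_j$, apply the gadget with $N'=\{v_i,v_j\}$ to obtain $G'$. If $\chi(G')\ge t$ then $G'$, a proper odd minor of $G$, already contradicts minimality; otherwise, given any $(t-1)$‑colouring $c'$ of $G'$, colour $G$ by giving $v_i$ and $v_j$ the colour $c'(w)$ and keeping $c'$ elsewhere — this is proper on $G-v$ since every $G-v$‑neighbour of $v_i$ or $v_j$ is a neighbour of $w$ in $G'$, and then the $t-1$ vertices of $N(v)$ receive at most $t-2$ colours (the two vertices $v_i,v_j$ share a colour), so $c'$ extends to $v$ and hence to all of $G$. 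Thus $G$ has a reduction, a contradiction. Therefore $N(v)$ is a clique, so $\{v\}\cup N(v)$ is a $K_t$, contradicting condition~(3) (a complete graph trivially contains an odd complete minor of the same order: take all branch sets to be single vertices). Hence $\delta(G)\ge t$.

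For the second statement, suppose $d(v)=(t-2)+l$ and $N'\subseteq N(v)$ is independent with $|N'|=l+1$; write $N'=\{u_0,\dots,u_l\}$. Apply the gadget to $N'$ to get the odd minor $G'$ with merged vertex $w$. As before, either $\chi(G')\ge t$ and minimality is contradicted, or every $(t-1)$‑colouring $c'$ of $G'$ lifts to a proper colouring of $G-v$ by colouring all of $u_0,\dots,u_l$ with $c'(w)$; now $N(v)$ consists of the $l+1$ vertices of $N'$, all of one colour, together with $(t-2+l)-(l+1)=t-3$ further vertices, so it receives at most $t-2$ colours and $c'$ extends to $v$. Again $G$ has a reduction, a contradiction, proving no such independent set exists.

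The points requiring care — the intended main obstacle — are verifying that the star‑contraction gadget is a genuine odd‑minor‑operation (delete edges, then contract an edge cut), and checking the colour count in the extension step so that strictly fewer than $t-1$ colours appear on $N(v)$; both are routine once the merged vertex's neighbourhood is pinned down, and the subcase $\chi(G')\ge t$ is dispatched immediately by minimality. It is also worth recording at the outset that every vertex has degree at least $t-1$ (vertex‑criticality), which is exactly what makes the counting tight in the $d(v)=t-1$ case.
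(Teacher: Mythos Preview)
Your proof is correct and follows essentially the same approach as the paper's: both use the identical gadget (delete all edges at $v$ except those to an independent set $N'\subseteq N(v)$, then contract the resulting edge cut $\partial(\{v\})$) and the same colour count to extend a $(t-1)$-colouring of the odd minor back to $G$. You are somewhat more explicit than the paper in handling the corner cases (the possibility $\chi(G')\ge t$, and the case where $N(v)$ is a clique forcing a $K_t$-subgraph), but these are exactly the details the paper is implicitly invoking when it writes ``so we may assume that $G''$ has a $(t-1)$-coloring'' and ``we may also assume that $N(v)$ does not induce a complete graph.''
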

\begin{proof}
Let $v$ be a vertex of degree $< t$ in $G$. Since $G-v$ has a $(t-1)$-coloring $\phi$, so if $d(v) < t-1$, then we can clearly put $v$ back to $G$ so that
the coloring $\phi$ can be extended to $v$ to give a $(t-1)$-coloring of $G$ (a contradiction).
So it remains to consider the case when $d(v) \geq t-1$. We may also assume that $N(v)$ does not induce a complete graph.

Consider the case when $d(v)=t+l-2$ and there are $l+1$ vertices $v_1, \dots, v_{l+1} \in N(v)$ such that $v_1, \dots, v_{l+1}$ are independent in $G$.
We delete all edges incident with $v$, except for $vv_1, \dots, vv_{l+1}$. Let $G'$ be the resulting graph. Now $vv_1, \dots, vv_{l+1}$ consist of
a cut in $G'$ such that one side only contains $v$. Contract $v, v_1, \dots, v_{l+1}$ into a single vertex $v'$.
These operations are odd-minor-operations, so
if the resulting graph $G''$ has an odd $K_t$-minor, so does $G$. So we may assume that $G''$ has a $(t-1)$-coloring $\phi'$.
We can now extend the coloring $\phi'$ to a $(t-1)$-coloring of $G$
because $v_1, \dots, v_{l+1}$ consist of an independent set (and hence
they receive the same color from $\phi'$), and $v$ only sees at most $(t-2)$ colors of $\phi'$.

This implies the second conclusion, and the first conclusion also follows because if $d(v)=t-1$, then  $N(v)$ does not induce a complete graph, but
then there are two vertices $v_1,v_2 \in N(v)$ with $v_1v_2 \not\in E(G)$ (so $v_1, v_2$ are independent).
\end{proof}

The following is straightforward too.
\begin{lemma}
\label{conn1}
\showlabel{conn1}
Let $G$ be a minimal counterexample to the odd Hadwiger's conjecture for the case $t$.  Then there is no separation $(A, B)$ of order at most $t-3$ such that
one of $A-B$, $B-A$ is bipartite.
\end{lemma}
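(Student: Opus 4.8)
The plan is to argue by contradiction: suppose $(A,B)$ is a separation of $G$ of order at most $t-3$ with, say, $B-A$ bipartite, chosen so that $B-A \neq \emptyset$ (if $B \subseteq A$ the separation is trivial, and if $A-B = \emptyset$ symmetrically). First I would fix an arbitrary $(t-1)$-coloring $\phi$ of the graph $G - (B - A)$, which exists since $G - (B-A)$ is a proper odd minor of $G$ (it is obtained by deleting the vertices of $B-A$, and $G$ is a minimal counterexample, so any proper odd minor is $(t-1)$-colorable). The goal is then to extend $\phi$ to all of $B-A$, which would give a $(t-1)$-coloring of $G$ — a contradiction — unless the extension is genuinely obstructed, in which case I must instead produce a \emph{reduction} of $G$ via odd-minor-operations, again contradicting minimality.

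The key step is the coloring-extension argument on $B$. Write $S := V(A)\cap V(B)$, so $|S|\le t-3$, and let $(P,Q)$ be the bipartition of $B-A$. The colors $\phi$ already uses on $S$ forbid at most $t-3$ colors at each vertex of $B-A$, so every vertex of $B-A$ has at least $2$ admissible colors from the palette $\{1,\dots,t-1\}$; thus we have a $2$-list-coloring problem on the bipartite graph $B-A$ where, additionally, the vertices of $S$ are precolored and adjacent to $B-A$. A bipartite graph is $2$-choosable only under restrictive structural conditions (by the Erd\H{o}s–Rubin–Taylor characterization of $2$-choosable graphs), so the extension need not succeed outright. The right move is instead: rather than extend $\phi$ vertex-by-vertex, \emph{reduce} $B$ onto $S = A\cap B$ using odd-minor-operations so that the reduced version of $B$ plus its coloring on $S$ is consistent, and then recolor. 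Concretely, for each pair $u,v\in S$ one applies Facts~\ref{easy2} and~\ref{easy3}: if $uv\notin E(G)$ and there is an odd $u$–$v$ path inside $(B-A)\cup\{u,v\}$, we may add the edge $uv$ when reducing; if there is an even path, we may identify $u$ and $v$. Since $B-A$ is bipartite, the parity of \emph{any} $u$–$v$ path through $B-A$ is determined by which sides of the bipartition $(P,Q)$ contain $u$'s and $v$'s neighbors, so exactly one of "add $uv$" or "identify $u,v$" is available for each non-adjacent pair $u,v\in S$ that is linked through $B-A$. Performing all of these reductions collapses $B-A$ away and leaves a graph on $S$ (with some added edges / identifications) that is an odd minor of $G$ supported on at most $t-3$ vertices; its chromatic number is at most $t-3 < t-1$, so any $(t-1)$-coloring of the part of $G$ outside $B-A$ restricts to a valid coloring that extends. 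This exhibits a reduction of $G$, contradicting the assumption that $G$ is a minimal counterexample (equivalently, that $G$ has no reduction).

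The delicate point — and I expect this to be the main obstacle — is handling vertices of $B-A$ whose only neighbors lie entirely within $B-A$ (not touching $S$), and more generally showing that after the reductions the "residue" of $B$ really does become a graph on at most $|S|\le t-3$ vertices with no further coloring obstruction; one must be careful that identifying two vertices of $S$ does not create an odd $K_t$-minor that was not present in $G$ (it cannot, since identification along a cut is an odd-minor-operation, so an odd $K_t$-minor in the result pulls back to one in $G$), and that the separation $(A,B)$ can be chosen with $A\cap B$ inducing no structure that blocks the argument (here Lemma~\ref{mind1} and the connectivity already extracted, plus the bipartiteness of $B-A$, do the work). Once the parity bookkeeping in Facts~\ref{easy1}–\ref{easy3} is set up correctly, the argument is a routine chaining of those facts, so I would present it compactly: pick the separation, invoke minimality to color $G - (B-A)$, apply Facts~\ref{easy2} and~\ref{easy3} to each relevant pair in $A\cap B$ to reduce $B-A$ away, observe the residual graph on $A\cap B$ is $(t-3)$-colorable hence the coloring extends, and conclude that a reduction exists — contradiction.
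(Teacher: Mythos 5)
The paper offers no proof of Lemma~\ref{conn1} (it is labelled ``straightforward''), and indeed the intended argument is considerably simpler than what you propose. Suppose $(A,B)$ is such a separation with $B-A$ bipartite. Since $A-B\neq\emptyset$, the subgraph $G[A]$ is a proper odd minor of $G$ with no odd $K_t$-minor, so by minimality it has a $(t-1)$-coloring $\phi$. At most $|A\cap B|\le t-3$ colors of $\phi$ appear on $A\cap B$, so at least \emph{two colors $a,b$ are used nowhere on $A\cap B$}. Assign $a$ to one side of the bipartition of $B-A$ and $b$ to the other. Vertices of $B-A$ have neighbors only in $(B-A)\cup(A\cap B)$; within $B-A$ the coloring is proper because $B-A$ is bipartite, and there are no conflicts with $A\cap B$ because neither $a$ nor $b$ appears there. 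This is a $(t-1)$-coloring of $G$, a contradiction.

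The central misconception in your proposal is the claim that ``the extension need not succeed outright,'' justified by appeal to the Erd\H{o}s--Rubin--Taylor characterization of $2$-choosable graphs. That would be relevant if each vertex of $B-A$ carried an \emph{arbitrary} list of two admissible colors, but here the lists all \emph{share} a common pair $\{a,b\}$ (the globally unused colors); coloring a bipartite graph with a fixed palette of two colors is never obstructed. Because you miss this, you pivot to an unnecessary reduction argument via Facts~\ref{easy2} and~\ref{easy3}, which as written is also unsound: the assertion that the reduced graph ``is an odd minor of $G$ supported on at most $t-3$ vertices'' with ``chromatic number at most $t-3$'' conflates the residue of $B$ with all of $A$; after collapsing $B-A$ the graph still contains $A-B$ and its chromatic number can be as high as $t-1$. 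You also do not actually verify that a $(t-1)$-coloring of the reduced graph lifts back to $G$. None of this machinery is needed -- the two-unused-colors observation closes the proof in one step.
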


We now give a fundamental lemma that tells one structural property of $G$.

\begin{lemma}
\label{minimal}
\showlabel{minimal}
Let $G$ be a minimal counterexample to the odd Hadwiger's conjecture for the case $t$.
Then $G$ does not have a separation $(A,B)$ with the following property:

 For any partition of $A \cap B$ into independent sets $V_1, \dots, V_l$ with $l \leq t-1$, $B$ can be reduced onto $A \cap B$, via the odd-minor-operations, such that,
all $V_i$ are reduced into a single vertex in the resulting graph, and the resulting graph on $A \cap B$ is a clique.
\end{lemma}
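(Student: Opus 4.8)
The plan is to assume, for contradiction, that $G$ \emph{does} have such a separation $(A,B)$, and to deduce that $G$ admits a reduction, contradicting the fact (noted in the introduction) that a minimal counterexample has no reduction. First I would normalize the separation by replacing $A$ with $A\cup G[A\cap B]$, so that every edge of $G$ with both ends in $A\cap B$ lies in $A$; this changes neither the vertex sets of $(A,B)$ nor the hypothesized property, since the latter refers only to $B$ and to $A\cap B$. Since $V(B)\setminus V(A)\neq\emptyset$, the graph $A$ is (a subgraph of) a proper induced subgraph of $G$, hence $(t-1)$-colorable by minimality; fix a $(t-1)$-coloring $\sigma$ of $A$ and let $V_1,\dots,V_l$ be the nonempty color classes of $\sigma$ restricted to $A\cap B$, so $l\le t-1$. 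By the normalization and the properness of $\sigma$, each $V_i$ is an independent set of $G$, so $V_1,\dots,V_l$ is an admissible partition to feed into the hypothesis.

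Next I would apply the hypothesized property to this partition: there are odd-minor-operations reducing $B$ onto $A\cap B$ after which each $V_i$ has been contracted to a single vertex $v_i$ and $\{v_1,\dots,v_l\}$ induces a clique. Performing these operations on $G$ yields a graph $H$ with vertex set $(V(A)\setminus V(B))\cup\{v_1,\dots,v_l\}$; as $V(B)\setminus V(A)\neq\emptyset$, $H$ has strictly fewer vertices than $G$, so $H$ is obtained from $G$ by odd-minor-operations and is a proper odd minor. It then remains to check that \emph{every} $(t-1)$-coloring $c$ of $H$ extends to a $(t-1)$-coloring of $G$; this exhibits $H$ as a reduction of $G$ and completes the proof. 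Since $v_1,\dots,v_l$ form a clique in $H$, the colors $c(v_1),\dots,c(v_l)$ are pairwise distinct. Uncontracting — keeping $c$ on $V(A)\setminus V(B)$ and giving every vertex of $V_i$ the color $c(v_i)$ — yields a proper $(t-1)$-coloring of $A$ whose restriction to $A\cap B$ sends each $V_i$ to $c(v_i)$. On the other side, the reduction of $B$ onto $A\cap B$ to the clique $K_l$ on $\{v_1,\dots,v_l\}$ is color-lifting in the sense underlying Facts~\ref{easy2} and~\ref{easy3}: the coloring $v_i\mapsto c(v_i)$ of $K_l$ lifts to a $(t-1)$-coloring of $B$ in which each $V_i$ is monochromatic with color $c(v_i)$. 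These two colorings agree on $A\cap B$, and since every edge of $G$ lies in $A$ or in $B$, their union is a $(t-1)$-coloring of $G$ that restricts to $c$ on $H$.

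The step I expect to be the main obstacle is exactly this last color-lifting claim for the $B$-side: that once $B$ can be odd-minor-reduced onto $A\cap B$ so that the classes $V_i$ become a clique, every proper coloring of $A\cap B$ that is constant on each $V_i$ and injective across the $V_i$'s extends to a proper $(t-1)$-coloring of $B$. This is the content of the reduction mechanism behind Facts~\ref{easy2} and~\ref{easy3} — an odd path between two distinct classes lets a cut-contraction witness their color difference, an even path inside a class lets a cut-contraction witness their color equality — but here it must be applied simultaneously to all pairs, and I would extract and prove this as a standalone statement before assembling the argument above. Everything else — the normalization of $A\cap B$, the $(t-1)$-colorability of $A$, and the relabeling of colors to match $c$ on the $v_i$'s — is routine bookkeeping.
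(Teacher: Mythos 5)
Your proof has a genuine gap, and you correctly flag where it is — but the obstacle you name is not merely a technical step to be filled in later; it is the step that makes the whole approach fail as written, and the fix changes the structure of the argument.

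The problem is which side you color first. You fix a $(t-1)$-coloring $\sigma$ of $A$ and let $V_1,\dots,V_l$ be its color classes on $A\cap B$. After reducing $B$ to a clique on these classes and coloring the reduced graph $H$ with $c$, you need to extend $c$ back to $B$; for that, you need a proper $(t-1)$-coloring of $B$ that is \emph{constant} on each $V_i$ and injective across the $V_i$'s. Nothing guarantees such a coloring exists. You know $B$ is $(t-1)$-colorable (it is a proper induced subgraph of $G$), so it has some coloring $\phi$ — but $\phi$'s color classes on $A\cap B$ are determined by the structure of $B$, not by $\sigma$, and there is no reason they should refine or coarsen the partition $\{V_1,\dots,V_l\}$ coming from $A$. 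The hypothesis of the lemma only says that the clique $K_l$ is an odd minor of $B$ with the prescribed node identification; this implies $l\le\chi(B)$, but odd-minor containment never implies that colorings of the minor lift to colorings of the host graph. Your appeal to the ``mechanism behind Facts~\ref{easy2} and~\ref{easy3}'' is a misreading of those facts: they tell you which \emph{reductions} (edge-additions or identifications in $A\cap B$) can be realized by odd-minor-operations on $B$, not that colorings of the reduced clique pull back to $B$.

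The paper's proof sidesteps this entirely by coloring $B$ first. Since $A-B\ne\emptyset$, $B$ is a proper induced subgraph and has a $(t-1)$-coloring $\phi$; the partition of $A\cap B$ is then taken to be the color classes of $\phi|_{A\cap B}$. Now the ``lifting'' is a tautology: $\phi$ itself is a $(t-1)$-coloring of $B$ constant on each $V_i$. After reducing $B$ onto the clique and coloring the resulting graph $G'$ by minimality, one simply permutes the colors of $\phi$ so that $V_i$ receives the color assigned to $v_i$ in $G'$ (possible because both are proper colorings assigning distinct colors to the $l$ classes), and the two colorings agree on $A\cap B$. Your uncontraction step on the $A$-side is fine; the missing ingredient is that the partition must be \emph{witnessed by an actual $(t-1)$-coloring of $B$}, which your choice of $\sigma$ does not provide. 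Replacing $\sigma$ (a coloring of $A$) by $\phi$ (a coloring of $B$) repairs the argument and recovers the paper's proof.
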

\begin{proof}
We first color $B$. Note that $A-B\not=\emptyset$, so $B$ is no longer a counterexample.
Hence $B$ must have a $(t-1)$-coloring $\phi$ (otherwise, $B$ has an odd $K_t$-minor).

This coloring partitions the vertices of $A \cap B$ into color classes $V_1, \dots, V_l$ with $l \leq t-1$.
By the assumption of the lemma, we can reduce $B$ onto $A \cap B$, via the odd-minor-operations, such that
all $V_i$ are reduced into a single vertex, and the resulting graph on $A \cap B$ is a clique. Let $G'$ be the resulting graph of $G$.

By the minimality of $G$, $G'$ has a $(t-1)$-coloring, and this coloring
together with $\phi$ gives rise to a $(t-1$)-coloring of $G$, a contradiction.
\end{proof}

The following lemma is easy to show (so we omit the proof).

\begin{lemma}
\label{re1}

Let $S$ be a vertex set of order $t$.
Suppose there are components $C_1, \dots, C_l$ in $G-S$ such that
each $C_i$ has the following property:

Each $C_i$ has neighbors to all the vertices in $S$.
Moreover, for any two vertices $u, v \in S$, there is an odd path between $u$ and $v$ in
$C_i \cup \{u,v\}$

If $l \geq t^2/2$, then there is an odd $K_t$-minor.
\end{lemma}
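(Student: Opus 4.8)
The plan is to construct an odd $K_t$-model of $G$ explicitly, using the $t$ vertices of $S=\{s_1,\dots,s_t\}$ as the ``centres'' of the $t$ nodes and routing the pairwise links through distinct components $C_i$. First I would note that $l\ge t^2/2\ge\binom{t}{2}$, so we may pick, for each pair $\{i,j\}$ with $1\le i<j\le t$, a distinct component among $C_1,\dots,C_l$, which I will call $C_{ij}$. By the hypothesis each $C_{ij}$ contains an odd path from $s_i$ to $s_j$ all of whose interior vertices lie in $C_{ij}$; write it as $x^{ij}_0 x^{ij}_1\cdots x^{ij}_{2m_{ij}+1}$ with $x^{ij}_0=s_i$ and $x^{ij}_{2m_{ij}+1}=s_j$.

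Next, split each such path at its middle edge $e_{ij}:=x^{ij}_{m_{ij}}x^{ij}_{m_{ij}+1}$ into the two halves $P^i_{ij}:=x^{ij}_0\cdots x^{ij}_{m_{ij}}$ and $P^j_{ij}:=x^{ij}_{m_{ij}+1}\cdots x^{ij}_{2m_{ij}+1}$, each of length $m_{ij}$, and put $B_i:=\bigcup_{j\ne i}P^i_{ij}$. Each $B_i$ is a tree: it is a ``spider'' with centre $s_i$ whose legs $P^i_{ij}$ have their interiors in pairwise distinct components of $G-S$, so (as $G-S$ has no edges between distinct components) the legs meet only at $s_i$. The same fact shows the $B_i$ are pairwise vertex-disjoint: the only component $B_i$ and $B_j$ can both meet is $C_{ij}$, where $B_i$ uses the interior of $P^i_{ij}$ while $B_j$ uses the interior of $P^j_{ij}$, and these are disjoint. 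Finally, for $i\ne j$ the edge $e_{ij}$ is an edge of $G$ with one end $x^{ij}_{m_{ij}}\in B_i$ and the other end $x^{ij}_{m_{ij}+1}\in B_j$. Hence, assigning the $i$-th vertex of $K_t$ to $B_i$ and the edge $ij$ of $K_t$ to $e_{ij}$ yields a $K_t$-model of $G$.

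It remains to show this model is odd, which is the only step needing (mild) care. I would exhibit the $2$-colouring from the characterisation of odd models: colour each $v\in B_i$ by the parity of the distance from $v$ to $s_i$ inside the tree $B_i$. This is proper on each $B_i$, so every node is bichromatic. The end $x^{ij}_{m_{ij}}$ of $e_{ij}$ lies at distance $m_{ij}$ from $s_i$ in $B_i$, and the end $x^{ij}_{m_{ij}+1}$ lies at distance $(2m_{ij}+1)-(m_{ij}+1)=m_{ij}$ from $s_j$ in $B_j$; hence both ends of $e_{ij}$ get the colour $m_{ij}\bmod 2$, so $e_{ij}$ is monochromatic. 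Thus every node is bichromatic and every link monochromatic, which makes the model odd; equivalently, one checks directly that every cycle of the model runs through some cyclic sequence of the spiders, entering and leaving each along two legs, and so contains exactly $2\sum m_{ij}$ node-edges over the pairs traversed, an even number. The degenerate case $m_{ij}=0$, where the odd path is simply the edge $s_is_j$ and contributes nothing to the interiors of $B_i$ and $B_j$, is covered by the same argument.

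The only genuine work here is the disjointness bookkeeping, which rests entirely on the $C_i$ being distinct components of $G-S$, together with the parity count at the links $e_{ij}$; I do not expect any real obstacle beyond organizing these two points cleanly.
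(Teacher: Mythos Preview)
Your proof is correct. The paper itself omits the proof of this lemma, declaring it ``easy to show,'' so there is nothing to compare against; your argument---assigning one component per pair, splitting each odd $s_i$--$s_j$ path at its middle edge to build disjoint spiders $B_i$, and colouring by parity of distance to the centre---is exactly the natural construction one expects and goes through without issue.
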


We give the following straightforward lemma as well.

\begin{lemma}
\label{re2}

Let $S$ be an independent set of order $s$.
Suppose there are connected components $C_1, \dots, C_l$ in $G-S$ such
that each $C_i$
has neighbors to all the vertices in $S$. Also, suppose furthermore that,
for any two vertices $u, v \in S$, there is an even path between $u$ and $v$ in
$C_i \cup \{u,v\}$.

If $l \geq s$, then $S$ can be contracted into a single vertex, via the odd-minor-operations, using the components $C_1, \dots, C_l$.
\end{lemma}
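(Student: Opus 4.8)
The plan is to realize the contraction by a single odd-minor-operation: I delete a carefully chosen set of edges and then contract one cut, arranged so that the contraction collapses $S$ (together with some interior material of the $C_i$'s) to a single vertex while leaving the rest of $G$ essentially intact. We may assume $s\ge 2$, since $s\le 1$ is trivial. Write $S=\{s_1,\dots,s_s\}$. For $i=1,\dots,s-1$ use the hypothesis to pick an even $s_i$--$s_{i+1}$ path $P_i$ inside $G[V(C_i)\cup\{s_i,s_{i+1}\}]$, and set $H=P_1\cup\cdots\cup P_{s-1}$. Since the $C_i$ are pairwise distinct components of $G-S$, they are vertex-disjoint and $G$ has no edge between two of them, so the $P_i$ meet only in the prescribed endpoints and $H$ is a single path running through $s_1,P_1,s_2,P_2,\dots,s_s$. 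The key point is that, because every $P_i$ has even length, the $2$-colouring of the path $H$ puts all of $s_1,\dots,s_s$ into one colour class; call the two classes $X_H\supseteq S$ and $Y_H$, so $Y_H$ consists only of interior vertices of the $P_i$, whence $Y_H\subseteq\bigcup_{i\le s-1}V(C_i)$ and $Y_H\ne\emptyset$ (each even path between distinct vertices has length at least $2$).

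Now I would specify the operation. First delete every edge of $G$ incident with $\bigcup_{i\le s-1}V(C_i)$ except the edges of $P_1,\dots,P_{s-1}$; this touches only interior material of the used components (in particular it removes the edges from the $s_j$ into the used $C_i$ other than path edges), but leaves all edges among $S\cup\bigl(V(G)\setminus\bigcup_{i\le s-1}V(C_i)\bigr)$ untouched and leaves $C_s,\dots,C_l$ intact, still attached as before. In the resulting graph take the cut $R$ given by the partition $(Y_H,\,V(G)\setminus Y_H)$. Every surviving edge incident with $Y_H$ is a path edge, and each path edge joins some $Y_j\subseteq Y_H$ to $X_j\subseteq X_H\subseteq V(G)\setminus Y_H$; hence $R=E(H)$ exactly. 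The graph $(V(G),R)=(V(G),E(H))$ has $V(H)$ as its only nontrivial component, so contracting all edges of $R$ collapses precisely $V(H)$ to one vertex $v^\ast$ and touches nothing else. Since $S\subseteq V(H)$, this contracts $S$ into the single vertex $v^\ast$, consuming $C_1,\dots,C_{s-1}$ in the process, while in the resulting graph $G'$ the edges among the remaining vertices and the attachments of $C_s,\dots,C_l$ are exactly those of $G$. As delete-edges-then-contract-a-cut is an odd-minor-operation, $G'$ is an odd minor of $G$, which is what the lemma asserts. (If one wants to literally use all of $C_1,\dots,C_l$, extend $H$ by appending for each $i\ge s$ an even $s_1$--$s_2$ path inside $C_i$; this keeps $H$ bipartite with $S$ in one class and changes nothing in the argument.)

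The whole argument hinges on two routine but essential bookkeeping points, and I expect no deeper obstacle. The first is the parity: it is exactly the evenness of the $P_i$ that forces all of $S$ into $X_H$, and this is what lets us keep every $s_j$ off the $Y_H$-side of the cut — which matters because each $s_j$ has neighbours scattered all over $G$ (every $C_i$ meets all of $S$), so an $s_j$ on the $Y_H$-side would throw its many external edges into the cut and drag unwanted vertices into the contracted blob. This is the same mechanism as in Fact~\ref{easy3}, where one can merge $u$ and $v$ through a connected piece only when that piece has an even $u$--$v$ path; the construction above is just $s-1$ simultaneous applications of that fact along the backbone $s_1,\dots,s_s$ (alternatively one could invoke Fact~\ref{easy3} iteratively, one $C_i$ at a time, checking each $C_j$ with $j>i$ survives the earlier steps). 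The second point is that after the prescribed deletions the cut equals $E(H)$ and nothing more — immediate since $Y_H\subseteq\bigcup_{i\le s-1}V(C_i)$ and we deleted all edges meeting that set other than path edges. The only real care in writing this up is verifying that the deletions leave $C_s,\dots,C_l$ and the rest of $G$ undisturbed, so that $G'$ is genuinely "$S$ contracted to a point, everything else preserved".
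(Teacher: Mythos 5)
The paper gives no proof of this lemma, labelling it straightforward, so there is no official argument to compare against; but your proof is correct and is precisely the argument one would expect from the surrounding material, especially Fact~\ref{easy3}. You fold the iterated merging of $s_1,\dots,s_s$ into a single delete-edges-then-contract-a-cut step by concatenating the $s-1$ even paths into one long even path $H$, two-colouring $H$ so that $S$ lies entirely on one side, and taking $R=\delta(Y_H)=E(H)$ as the cut; this is clean, avoids re-checking parities after each successive operation, and, as you observe, already works with only $s-1$ of the components.
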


We also need the following lemma, whose proof follows from Lemma \ref{conn1}.

\begin{lemma}
\label{re3}
\showlabel{re3}
Let $(A,B)$ be a separation of order at most $t-2$ in a given graph $G$.
Suppose that $A-B$ is an induced bipartite graph.

No matter how we $(t-1)$-color $B$, we can extend this coloring
to a $(t-1)$-coloring of $G$, unless there is a vertex in $A-B$ that is adjacent to all vertices of $A \cap B$ and $|A \cap B|=t-2$.

%
%
%
\end{lemma}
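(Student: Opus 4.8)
The plan is to prove this directly by a short list-colouring argument; it is essentially the argument behind Lemma~\ref{conn1}, pushed from order $t-3$ to order $t-2$ at the cost of the stated exceptional configuration. Fix an arbitrary $(t-1)$-colouring $c$ of $B$, put $S:=A\cap B$ (so $|S|\le t-2$), and fix a proper $2$-colouring $(P,Q)$ of the induced bipartite graph $G[A-B]$. If $|S|=t-2$ and some vertex of $A-B$ is adjacent to all of $S$, there is nothing to prove, so I assume this does not happen; then every $v\in A-B$ satisfies $|N(v)\cap S|\le t-3$ --- trivially if $|S|\le t-3$, and by the current assumption if $|S|=t-2$. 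Since $(A,B)$ is a separation, there are no edges between $A-B$ and $B-A$, so extending $c$ amounts to an $L$-colouring of $G[A-B]$ with $L(v):=\{1,\dots,t-1\}\setminus c(N(v)\cap S)$.

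The key point is that $|c(S)|\le|S|\le t-2$, so some colour $\gamma\in\{1,\dots,t-1\}$ is used on no vertex of $S$, and hence $\gamma\in L(v)$ for every $v\in A-B$. I would first colour every vertex of $Q$ with $\gamma$: this is legal because $Q$ is independent in $G[A-B]$, $Q$ sends no edge to $B-A$, and $\gamma$ is used on no neighbour in $S$ of a vertex of $Q$. Next I would colour each $v\in P$ in turn: such a $v$ must avoid only the at most $t-3$ colours on $N(v)\cap S$ together with $\gamma$ (coming from its neighbours in $Q$), i.e.\ at most $t-2$ colours out of $t-1$, so at least one colour is free; and since $P$ is independent these choices do not interact. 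Combined with $c$ on $B$ this yields a proper $(t-1)$-colouring of $G=A\cup B$, as required.

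The only step that needs care is the reduction to the inequality $|N(v)\cap S|\le t-3$ for all $v\in A-B$: this is exactly the place where the exceptional clause enters, and I would also record that in the excluded case the extension can genuinely fail --- for instance, take a path inside $A-B$ whose two endpoints are each adjacent to all of a $(t-2)$-set $S$ that happens to be rainbow-coloured by $c$, which forces the two endpoints into the single colour missing from $S$ and leaves one of them with an empty list --- so the hypothesis is sharp. Beyond that the argument is entirely elementary and, given $c$ and a bipartition of $G[A-B]$, runs in linear time, which is all the later algorithmic parts need; so I do not expect any real obstacle here.
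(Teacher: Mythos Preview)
Your proof is correct and follows essentially the same approach as the paper: pick a colour $\gamma$ missing from $c(S)$, assign $\gamma$ to one partite class, and then greedily colour the other class using the bound $|N(v)\cap S|\le t-3$. The paper splits off the case $|A\cap B|\le t-3$ via Lemma~\ref{conn1} and treats the subcase where $A\cap B$ uses at most $t-3$ colours separately, whereas you handle all cases with a single uniform list-colouring argument; your presentation is marginally cleaner but the underlying idea is identical.
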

\begin{proof}
If $|A \cap B| \leq t-3$, the result follows from Lemma \ref{conn1}.
Assume $|A \cap B| = t-2$. If $A \cap B$ uses at most $t-3$ colors of the $(t-1)$-coloring of $B$, we can also extend the coloring of $B$ to a $(t-1)$-coloring of $G$ because $A-B$ needs only two colors.
So it remains to consider the
case when $|A \cap B|=t-2$ and $A \cap B$ uses exactly $t-2$ colors of the coloring of $B$.

Since there is exactly one color $a$ that is not used in the coloring of $A \cap B$, we first color one partite set $B_1$ of the bipartite graph $(B_1,B_2)$ of $A-B$ with the color $a$. If there is no vertex in $B_2$ that is adjacent to all the vertices of
$A \cap B$, for each vertex in $B_2$ there is always one color that yields a valid coloring. Thus we can extend the coloring of $B$ to the whole graph $G$, unless there
is a vertex in $B_2$ that is adjacent to all the vertices in $A \cap B$.
\drop{
The second statement holds by just following the above proof. More precisely, if there is a color $a$ that is not used in the coloring of $A \cap B$,
then we first color one partite set $B_1$ of the bipartite graph $(B_1,B_2)$ of $A-B$ with the color $a$, where $B_1$ does not contain $v$.
Then we color $B_2$ with the same color as the one $v$ has received from the coloring of $B$.
If all $(t-1)$-colors are used in $A \cap B$, we appoint the color $a$ as the color of $v$ (from the coloring of $B$).
The rest of the argument is exactly same, so we omit the proof.}
\end{proof}

We need the following variants of Lemma \ref{re3}. This lemma indeed confirms the degree condition
on $Y_t$ in the second conclusion of 3 in Theorem \ref{main1}.

\begin{lemma}
\label{re4}
\showlabel{re4}
Let $G$ be a minimal counterexample to the odd Hadwiger's conjecture
for the case $t$.
Suppose $G-Z$ can be written as $W \cup B_1 \cup \dots \cup B_l$ (for some $l$) such that
$W$ induces a bipartite graph $(A',B')$, $B_i$ is not bipartite, $B_i-W$ is connected, $|B_i \cap W| \leq 1$ for each $i$, and for any $i, j$ with $i \not=j$,
$|B_i \cap B_j| \leq 1$. Then $l < |Z|^22^{|Z|}$.
\end{lemma}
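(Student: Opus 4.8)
The plan is to bound $l$ by counting, for each of the $B_i$'s, a certain ``type'' determined by how $B_i$ attaches to $Z$ and to $W$, and then showing that too many $B_i$'s of the same type force a reduction of $G$, contradicting minimality. First I would observe that, since $G$ has no reduction and no separation of order at most $t-3$ with one bipartite side (Lemma \ref{conn1}), each non-bipartite piece $B_i$ must be ``heavily anchored'': the vertex set $Z \cup (B_i\cap W)$ together with the at most one vertex $B_i$ shares with some other $B_j$ cannot be small, or else $(B_i, G - (B_i - (Z\cup W)))$ would be a low-order separation with bipartite side $B_i - (Z\cup W)$ (note $B_i-W$ bipartite would follow since $B_i$ is connected to the rest only through these vertices, and internally $B_i$ can be made bipartite after removing its one $W$-vertex). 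More precisely, since the only vertices through which $B_i$ can touch the rest of $G$ are those in $Z$, its $\le 1$ vertex of $W$, and its $\le 1$ shared vertices with each of a bounded set of other $B_j$'s, essentially the attachment of $B_i$ to the outside lies inside $Z$ plus a bounded set, so Lemma \ref{conn1} forces $B_i$ to have at least $t-2$ neighbours (counting multiplicity through the structure) in $Z$.

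Next I would define the \emph{type} of $B_i$ to be the pair consisting of (a) the subset $N_i := N(B_i)\cap Z \subseteq Z$, and (b) the at most one vertex of $W$ in $B_i$, together with which side ($A'$ or $B'$) of the bipartition of $W$ that vertex lies on. Since $|Z|$ is the relevant parameter, there are at most $2^{|Z|}$ choices for $N_i$, and a bounded number of choices for the $W$-part, so the number of types is at most something like $|Z|\cdot 2^{|Z|}$ (with the $W$-vertex contributing the polynomial factor — one must be a little careful to land exactly at $|Z|^2 2^{|Z|}$, but a crude count of $2\,|Z|\,2^{|Z|}$ already suffices for ``$<|Z|^2 2^{|Z|}$'' once $|Z|\ge 2$, and small $|Z|$ is handled separately). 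The heart of the argument is then: if two pieces $B_i, B_j$ have the same type, they attach to $G$ through exactly the same ``interface'' $N_i \cup (\text{common }W\text{-vertex})$, whose size is at most $|Z|+1 \le 3\theta - 1$. I would then apply a reduction of the kind in Lemma \ref{minimal} / Fact \ref{easy2}, \ref{easy3}: colour $G$ minus the interior of one of the two copies, and use the \emph{other} copy to realise whatever partial colouring is needed on the interface — since $B_i$ and $B_j$ are both non-bipartite and both see all of the same $Z$-vertices, one of them can always be used to extend (or to perform the identification/clique-completion) exactly as in Lemmas \ref{re1}--\ref{re3}. This exhibits a reduction in $G$, contradicting that $G$ is a minimal counterexample.

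The main obstacle I expect is the second step: carefully checking that two $B_i$'s of the same type really do give a reduction. The subtlety is that ``same type'' only records the \emph{neighbourhood} in $Z$ and the $W$-attachment, not the internal colouring behaviour of $B_i$ (e.g., which even/odd paths exist between pairs of interface vertices inside $B_i$). To handle this I would refine the type to also record, for each pair of interface vertices $u,v$, the parity classes of $u$--$v$ paths available inside $B_i$ (there are only four possibilities per pair: only-even, only-odd, both, neither — this multiplies the type count by a constant depending only on $|Z|$, which is still absorbed into $|Z|^2 2^{|Z|}$ after re-indexing, or one simply allows a slightly larger polynomial and notes the bound $|Z|^22^{|Z|}$ is as stated for the relevant range of $t$). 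With the refined type, two equal-type pieces are genuinely interchangeable for all the reduction operations of Facts \ref{easy1}--\ref{easy3}, and the contradiction goes through. A secondary point to nail down is the degenerate case $|Z|\le 1$ (where $2^{|Z|}\le 2$ and one must verify $l$ is in fact $0$ or $1$ directly, since a single non-bipartite $B_1$ with almost no attachment would itself violate minimal degree Lemma \ref{mind1} or Lemma \ref{conn1}); this is routine once the main case is in place.
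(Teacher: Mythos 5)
Your approach is genuinely different from the paper's and, as written, has a substantive gap. The paper's proof does not bound $l$ by counting ``types'': it applies the pigeonhole principle only to the $Z$-neighborhood $N(B_i)\cap Z$ (giving $2^{|Z|}$ classes), extracting a family $\Q'$ of at least $|Z|^2$ components all with the \emph{same} $Z$-neighborhood $Z'$. The $|Z|^2$ factor is not a type-count; it is a budget. The proof then spends these copies: it uses up to $|Z|$ of them to identify pairs of vertices of $Z'$ via even paths, more of them to add edges via odd paths until a clique $Z''$ is formed, and still has at least $|Z|^2/2$ unused copies left over for the coloring-extension phase. Crucially, the reduction only produces a contradiction because of the global clique structure built on $Z''$: after $G'$ (the reduced graph) is $(t-1)$-colored, the clique on $Z''$ forces the colors there to be pairwise distinct, and then the leftover components are used one at a time (some to re-identify vertices that receive the same color, others to re-build the clique) to extend the coloring into each $Q\in\Q''$.

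Your plan of showing ``two same-type pieces force a reduction'' does not obviously go through. Even with the parity-refinement you propose, having just two components $B_i, B_j$ with identical $Z$-neighborhood and identical internal parity data does not let you delete one and extend a coloring into it: whether $B_i$ can be $(t-1)$-colored consistently depends on \emph{which} colors end up on $N(B_i)\cap Z$ under the coloring of $G$ minus $B_i$'s interior, and nothing about $B_j$'s sameness controls that. The paper specifically needs the clique $Z''$ to constrain those colors, and building that clique requires many copies, not two. In addition, your notion of ``interface'' omits the shared vertices $B_i\cap B_j$ with other blocks (allowed by the hypothesis $|B_i\cap B_j|\le 1$), which are not recorded by the $(Z,W)$-type and can be numerous when $l$ is large. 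Finally, the remark that a bound of $2|Z|2^{|Z|}$ types ``already suffices'' misattributes the $|Z|^2$ factor: it is not a type-count but the number of same-neighborhood copies required to run the clique-building argument, so a type count of order $|Z|\,2^{|Z|}$ would \emph{not} by itself yield the lemma. To repair your approach you would essentially have to abandon the pairwise comparison and adopt the paper's many-copies clique construction.
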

\begin{proof}
Suppose $l \geq |Z|^22^{|Z|}$. Then by the pigeon hole principle, there is a set $\Q'$ of $B_1,\dots, B_l$ with
$|\Q'| \geq |Z|^2$ such that for any two sets $Q,Q' \in \Q'$, $N(Q) \cap Z =N(Q') \cap Z = Z' \subset Z$.

We now reduce $Z'$ into a clique $Z''$, via odd-minor-operations using
some components in $\Q'$.
We first keep identifying two vertices $u,v \in Z'$ by taking an even path
in $(Q-\{y\}) \cup \{u,v\}$ between $u$ and $v$, if $uv$ is not present, where $y =Q \cap (G-Z)$.

When we stuck, we still have at least $|Z|^2-|Z|$ components $Q$ remaining in $\Q'$. For each of these components,
since we cannot identify two vertices of $Z'$ via odd-minor-operations, we can add an edge $uv$ (with $u,v \in Z'$) from $Q \in Q'$ by taking an odd path in $(Q-\{y\}) \cup \{u,v\}$ between $u$ and $v$, if $uv$ is not present, where $y =Q \cap (G-Z)$.
This way, we can reduce $Z'$ onto a clique $Z''$ via odd-minor-operations.
Let $\Q''$ be the components in $\Q$ that are used to construct the clique $Z''$. Note that $\Q'-\Q''\not=\emptyset$.
If $|Z''| \geq t$, we are done, as this is an odd $K_t$-model.

Let $G'$ be the resulting graph. Note that
none of the components in $\Q''$ exists in $G'$.
By the minimality of $G$, $G'$ has a $(t-1)$-coloring $\phi$.
For each $Q \in \Q''$ with $Q \cap (G-Z)=\{w\}$, $w$ gets a color from $\phi$.
If the color of $w$ is used in the coloring of $Z''$, say $w$ and $x$ in $Z''$ receive the same color,
then we just take an even path from $w$ to $x$ via some component $Q'$ in $\Q'-\Q''$ (which is possible because $Q'$ is not bipartite).
This allows us to identify $w$ and $x$ via the odd-minor-operations
in $G-Q$. Then we use components in $\Q'-\{Q,Q'\}$ to reduce to
the clique of $Z''$, via the odd-minor-operations (which is possible, because all components in $\Q'-\{Q,Q'\}$ are not bipartite and
$|\Q'| \geq |Z|^2$).

If the color of $w$ is different from any of the colors in $Z''$, we use components in $\Q'-\{Q\}$ to reduce to
the clique of $\{w\} \cup Z''$, via the odd-minor-operations (which is possible, again, because all components in $\Q''-\{Q\}$ are not bipartite and $|\Q'| \geq |Z|^2$).

So we can apply Lemma \ref{minimal} to each $Q \in \Q''$ to obtain
a $(t-1)$-coloring of $Q$ which is consistent with the coloring of $G'$.
This
yields a $(t-1)$-coloring of $G$, a contradiction.
\end{proof}

\section{Refinement of the embedding}
\label{refsur}
\showlabel{refsur}

Let us assume that
an {$\alpha$-nearly embeding} in a surface $\Sigma$ is given; i.e., there
is a subset $Z\subseteq V(G)$ with $|Z|\leq \alpha$, two sets
$\mathcal V=\{(G_1,\Omega_1),\ldots,(G_{\alpha'},\Omega_{\alpha'})\}$, where $\alpha'\le\alpha$, and $\mathcal W=\{(G_{\alpha'+1},\Omega_{\alpha'+1}),\ldots,(G_{n},\Omega_{n})\}$ of societies, and
a graph $G_0$ is embedded in $\Sigma$ with Euler genus $g$.

We use the notion of radial graph. Informally, the
radial graph of an embedded graph $G$ in $\Sigma$ is the
bipartite graph $R_{G}$ obtained by selecting a point in every
region $r$ of $G$ and connecting it to every vertex of $G$ incident
to that region. However, a region maybe ``incident more than once''
with the same vertex, so one needs a more formal definition. A
\emph{radial drawing} $R_G$ is a radial graph of a 2-cell embedded
graph $G$ in $\Sigma$ if
 \begin{enumerate}
 \item $V(E(G))\cap V(E(R_G)) =V(G)\subseteq V(R_G)$;
 \item Each region $r\in R(G)$ contains a unique vertex $v_r \in V(R_G)$;
 \item $R_G$ is bipartite with a bipartition $(V(G), \{v_r \colon r\in R(G)\})$;
 \item If $e,f$ are edges of $R_G$ with the same ends $v\in V(G)$, $v_r\in V(R_G)$, then
        $e\cup f$ does not bound a closed disk in $r\cup \{v\}$;
\item $R_G$ is maximal subject to 1,2,3 and 4.

 \end{enumerate}
Finally, let $A(R_G)$ be the set of vertices, edges, and regions
  (collectively, \emph{atoms}) in the radial graph $R_G$.

By the {\it interior} of a closed walk of the radial graph we mean the union of  its elements  and the elements on the inside of the cycles it contains.
%
When we talk of a drawing of representativity
$r$ in the sphere, we are implicitly associating some three cuffs. This allows us to define
{\it interior} of a closed walk of the radial graph, which contains at most one cuff.
With this definition, the distance function defined above is also a metric in the plane \cite{RS7}.
 According to Section~9 of \cite{RS11}, we obtain the following.

\begin{theorem}
\label{metric}\showlabel{metric}
  In an embedding of representativity $r$ we can define
  a metric $d$ on $A(R_G)$ as follows:
  \begin{enumerate}
  \item If $a = b$, then $d(a,b) = 0$.
  \item If $a \neq b$, and $a$ and $b$ are  the interior
    closed walk of radial graph of length $<2r$, then $d(a,b)$ is
  half the minimum length of such a walk
  \item Otherwise, $d(a,b) = \theta$.
  \end{enumerate}
  \end{theorem}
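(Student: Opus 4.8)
Two things must be verified: that rules (1)--(3) define a single-valued function $d$ on $A(R_G)\times A(R_G)$, and that this function is a metric. Single-valuedness is clear, since the three rules are mutually exclusive as stated, and rule (2) — which applies precisely when the relevant set of closed walks is nonempty — involves a minimum over a finite, nonempty set of positive even integers less than $2r$ (the radial graph $R_G$ is bipartite, so every nontrivial closed walk has even length), which therefore exists. Nonnegativity is clear. Symmetry is immediate, because the family of closed walks whose interior contains $\{a,b\}$, and the length of a walk, are unchanged when $a$ and $b$ are swapped. For the identity of indiscernibles, rule (1) gives $d(a,a)=0$; conversely, if $a\neq b$ then any closed walk of $R_G$ whose interior contains both $a$ and $b$ has length at least $2$, so rule (2) gives $d(a,b)\ge 1$ and rule (3) gives $d(a,b)=\theta\ge 1$; in either case $d(a,b)>0$.

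The substantive point is the triangle inequality, and it rests entirely on the following \emph{merging lemma}: if $W_1,W_2$ are closed walks of $R_G$ with $|W_1|+|W_2|<2r$, $\{a,b\}\subseteq\int(W_1)$, and $\{b,c\}\subseteq\int(W_2)$, then there is a closed walk $W$ of $R_G$ with $\{a,c\}\subseteq\int(W)$ and $|W|\le|W_1|+|W_2|$. Granting this, fix distinct $a,b,c$ (the configurations in which two of them coincide follow at once from rule (1)). If $d(a,b)=\theta$ or $d(b,c)=\theta$, then $d(a,b)+d(b,c)\ge\theta\ge d(a,c)$, so assume both are given by rule (2); choose minimal witnessing walks $W_1,W_2$, so $|W_1|=2d(a,b)$ and $|W_2|=2d(b,c)$. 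If $d(a,b)+d(b,c)<r$, then $|W_1|+|W_2|<2r$, and the merging lemma yields a closed walk $W$ with $\{a,c\}\subseteq\int(W)$ and $|W|\le 2\bigl(d(a,b)+d(b,c)\bigr)<2r$; hence rule (2) applies to the pair $(a,c)$ and $d(a,c)\le|W|/2\le d(a,b)+d(b,c)$. If instead $d(a,b)+d(b,c)\ge r$, then $d(a,c)\le\theta\le r\le d(a,b)+d(b,c)$ by the choice of $\theta$. In all cases $d(a,c)\le d(a,b)+d(b,c)$.

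It remains to establish the merging lemma, which is the heart of the matter and is exactly the surface-topological content of Section~9 of \cite{RS11} (and, in the plane, of \cite{RS7}). Since $|W_i|<2r$, the closed curve traced by $W_i$ in $\Sigma$ meets $G$ in fewer than $r$ vertices, so by the definition of representativity it is not genus-reducing; a standard argument then shows that $W_i$ bounds a disk $\Delta_i$ in $\Sigma$ and that the combinatorial interior $\int(W_i)$ coincides with $\Delta_i$. As $b\in\Delta_1\cap\Delta_2$, the union $\Delta_1\cup\Delta_2$ is a connected region containing $a,b,c$; one argues that it is again a disk, bounded by a closed walk $W$ of $R_G$ with $E(W)\subseteq E(W_1)\cup E(W_2)$, so that $|W|\le|W_1|+|W_2|$ and $\int(W)=\Delta_1\cup\Delta_2\supseteq\{a,c\}$. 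The main obstacle is precisely this last step: the intersection $\Delta_1\cap\Delta_2$ may be disconnected, so one must rule out that $\Delta_1\cup\Delta_2$ fails to be simply connected by wrapping around a handle or crosscap of $\Sigma$; this is where the hypothesis $|W_1|+|W_2|<2r$ together with large representativity is indispensable, and where one should follow the detailed arguments of \cite[Section~9]{RS11} (and \cite{RS7} in the sphere-with-cuffs setting). Once the merging lemma is in place, the argument of the previous paragraph completes the proof that $d$ is a metric.
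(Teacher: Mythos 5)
The paper offers no proof for this theorem: it is imported verbatim from Section~9 of Robertson--Seymour's Graph Minors~XI (and, for the planar case, Graph Minors~VII), the only justification being the sentence ``According to Section~9 of [RS11], we obtain the following.'' Your proposal therefore goes further than the paper does. You correctly isolate the triangle inequality as the only axiom with content and reduce it to a merging lemma for closed walks of small combined length; the remaining axioms (well-definedness, symmetry, positivity) you handle with short and correct observations, including the nice point that closed walks in the bipartite radial graph have even length, so that ``half the minimum length'' is an integer. Since you then defer the merging lemma back to RS11, your proof and the paper's ultimately rest on the same source --- yours is an explicit reduction rather than a bare citation, which is a genuine improvement in exposition, not a different mathematical route.

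Two caveats are worth making explicit. First, your arguments for positivity and for the second branch of the triangle inequality tacitly assume $1\le\theta\le r$; the theorem as transcribed in the paper never defines $\theta$ (this is almost certainly a transcription of what should be $r$), and the assumption should be stated rather than left implicit. Second, in the merging lemma you treat each $W_i$ as a simple closed curve that ``bounds a disk'' $\Delta_i$, but the theorem speaks of closed \emph{walks}, and the paper's notion of interior is combinatorial (``the union of its elements and the elements on the inside of the cycles it contains''); passing from the non-genus-reducing property to a well-behaved disk-like interior for a non-simple closed walk is exactly where the technical work of RS11~\S9 lives, so it is appropriate --- but should be emphasized --- that this is the step you are outsourcing, not merely the case where $\Delta_1\cap\Delta_2$ is disconnected.
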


Hereafter we refer \DEF{distance} to this metric.
Distance between two connected subgraphs $H_1, H_2$ can be defined as the smallest distance between $u \in H_1$ and $v \in H_2$. Note that $H_1$ or $H_2$ could be a single vertex $u$ or $v$.

We need the following. A similar lemma (and its proof) appears in \cite{kdh}.
\begin{lemma}
\label{refine}\showlabel{refine}
In addition to the structures in Theorem \ref{thm:3.1} with $\alpha$ and $\delta \geq 16(z+\alpha)(4\alpha^2+2z\alpha)$, where $\alpha$ is as below,
the following holds:
Let $S$ be a set of faces of order $z \geq 3$ in $G'_0$.
Then we get a refinement of the $\alpha'$-near, $\delta'$-rich embedding that satisfies the following:
\begin{enumerate}
\item
$\alpha' \leq 4\alpha^2+2\alpha z$, and $\delta' \geq \delta/2$.
\item
every vertex of $S$ is covered by a large vortex.
\end{enumerate}
Moreover, such a modification is possible in $O(n^2)$ time.
\end{lemma}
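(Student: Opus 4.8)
The goal is to take an $\alpha$-near, $\delta$-rich embedding $(\sigma,G_0,Z,\V,\W)$ in $\Sigma$ together with a set $S$ of $z$ faces of $G_0'$, and produce a refined near-embedding in which all vertices incident with the faces of $S$ are swallowed by large vortices, at the cost of a controlled blow-up of the apex/vortex parameter ($\alpha'\le 4\alpha^2+2\alpha z$) and only a halving of the richness ($\delta'\ge\delta/2$). The strategy is to process the faces of $S$ one at a time (or in small batches), and for each face $f$ to \emph{carve out a disk around $f$} and reinterpret everything inside that disk as a new large vortex, re-routing the concentric cycles and linkages of the original structure around the carved region.

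\textbf{Step 1: locate a clean annular region around each face of $S$.}
First I would use $\delta$-richness to find, for each face $f\in S$, a family of concentric cycles in $G_0'$ surrounding $f$, disjoint from the flat wall $H$ of property~(i) and from the disks $D_1(V),\dots,D_\delta(V)$ tightly enclosing the existing large vortices $V\in\V$. Because the $z$ faces are finitely many and the representativity and the number of concentric cycles are at least $\delta$, a counting/pigeonhole argument (of the type used for the choice of $\delta=16(z+\alpha)(4\alpha^2+2z\alpha)$) lets me reserve a "budget" of $\delta/2$ nested cycles around each face of $S$ after all the existing structure has been avoided. The radial-graph metric of Theorem~\ref{metric} is the right tool to quantify "disjoint" and "far from" here.

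\textbf{Step 2: convert each carved disk into a new large vortex, and re-route.}
For each $f\in S$ I take the innermost reserved cycle $C(f)$, let $D(f)$ be the disk it bounds containing $f$, and declare the subgraph of $G-Z$ drawn inside $D(f)$ (together with the society given by the cyclic order of $V(C(f))$ on $\partial D(f)$) to be a new large vortex. Its adhesion/length is bounded by the length of $C(f)$, which in turn is controlled because $C(f)$ lies in a bounded-face-distance neighbourhood of $f$ — this is exactly where the $4\alpha^2+2z\alpha$ bound comes from (each original large vortex can be cut by a carved region into at most two pieces, contributing the $4\alpha^2$-type term, and the $z$ faces contribute the $2z\alpha$ term). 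Any original concentric-cycle system or linkage path $\P(V)$ that passes through $D(f)$ must be pushed to run along $\partial D(f)$; the reserved cycles outside $C(f)$ give the room to do this while keeping orthogonality (properties (vi), (vii)) and tight enclosure. Small vortices of $\W$ that land inside $D(f)$ are simply absorbed into the new large vortex; those outside are untouched. The apex set $Z$ is unchanged. After doing this for all $z$ faces, every vertex incident with a face of $S$ is inside some carved disk, hence covered by a large vortex, giving conclusion~2.

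\textbf{Step 3: bookkeeping and the algorithm.}
I then verify that the result is a genuine $\alpha'$-near, $\delta'$-rich embedding: the surface $\Sigma$ and its genus are unchanged; representativity is unaffected since we only removed disk interiors; the number of large vortices and their widths are bounded by $\alpha'$; and the halved cycle budget gives $\delta'\ge\delta/2$. For the running time, Lemma~\ref{lem:omega} supplies the linear decompositions of the new vortices in $O(n^2)$ time, and the cycle/linkage re-routing is a bounded number of shortest-path-in-the-radial-graph computations, so the whole refinement runs in $O(n^2)$.

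\textbf{Main obstacle.}
The delicate point is Step~2: ensuring that after carving $z$ disks — which may individually intersect several of the original $\alpha'$ large vortices and their nested cycle systems — one can \emph{simultaneously} re-route all the concentric cycles and all the linkage path systems so that \emph{every} richness property (i)--(vii) survives with parameter $\ge\delta/2$, and that the newly created vortices, together with the surviving originals, still satisfy the pairwise-distance and disjoint-disk requirements. Getting the numerology to close — i.e. that the stated $\delta\ge 16(z+\alpha)(4\alpha^2+2z\alpha)$ is enough of a cushion to absorb all these simultaneous re-routings — is the real content of the proof, and is where I expect to lean on (and essentially reproduce, with the extra $z$ faces handled as in \cite{kdh}) the analogous argument cited there.
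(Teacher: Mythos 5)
Your high-level idea---carve a disk around each bad face, declare the interior a new large vortex, and pay for it in $\alpha'$ while halving $\delta$---is the same as the paper's, and Steps 1 and 3 are in the right spirit. But Step 2 is where your route diverges from the paper's, and the divergence matters: you try to \emph{preserve} the original large vortices and their concentric-cycle and linkage systems, re-routing them around the newly carved disks. You correctly flag this "simultaneous re-routing of all the richness properties" as the delicate point, but you don't supply the argument; you just gesture at it. The paper avoids that re-routing entirely by a small trick you don't use: it first augments $S$ by taking one representative vertex from \emph{each existing large vortex}, obtaining $S' = S \cup \{v_1,\dots,v_{\alpha}\}$ with $|S'|\le z+\alpha$, and then builds the disk cover (Lemmas~\ref{precenter} and~\ref{center}) over this enlarged set. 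Every existing large vortex is then \emph{inside} some carved disk, so there is nothing to re-route around it---the disk's interior, together with whatever old vortices it swallows, is simply re-interpreted as a single new large vortex via Lemma~\ref{cut structure1}, and Lemma~\ref{delete} bounds the drop in representativity. The old cycle/linkage systems for old vortices are discarded wholesale and rebuilt for the new (fewer, bigger) vortices, rather than patched.

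A second, smaller slip: your explanation of where $4\alpha^2+2\alpha z$ comes from ("each original large vortex can be cut into at most two pieces, contributing the $4\alpha^2$-type term...") is not the paper's computation. In the paper it is a depth bound from Lemma~\ref{cut structure1}: a carved disk of radius $r\le(z+\alpha)\alpha$ containing at most $k\le\alpha$ old vortices of depth $h\le\alpha$ is itself a vortex of depth at most $2r+2kh \le 2(z+\alpha)\alpha + 2\alpha^2 = 4\alpha^2+2z\alpha$. So the bound reflects disk radius plus swallowed vortex depth, not vortex-cutting. Incorporating the $S\mapsto S'$ augmentation would both repair the numerology and let you delete the "main obstacle" paragraph, since the simultaneous re-routing you were worried about never needs to happen.
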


In order to show Lemma \ref{refine}, we need several lemmas.
Roughly, the following lemma says that
$S$ can be covered by a
bounded number of bounded-radius disks, where the radius is defined by the distance.
More precisely, there is a closed curve $C$ such that the graph inside the disk $D=D(C)$ (i.e., bounded by $C$)
is of bounded radius. In this case, we say that the disk $D$ is of bounded-radius (or sometimes we say that
the disk is of radius at most $l$ for some constant $l$).

Roughly, the following lemma says that
$S$ can be covered by a
bounded number of bounded-radius disks, where the radius is defined by the distance.
\begin{lemma}
\label{precenter}
In $G'_0$,
there is a set $C$ of at most $z$ \emph{centers}
such that, for each face $s$ in the set $S$, there is
a center $c$ in $C$ such that $d(c, s)\leq \alpha$.
\end{lemma}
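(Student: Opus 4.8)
The plan is to notice that the lemma is essentially a bookkeeping statement: since $S$ consists of only $z$ faces, we can afford to dedicate one ``center'' to each face of $S$ individually, so no covering or clustering argument is needed. The substance of the subsequent lemmas (culminating in Lemma~\ref{refine}) will be to turn these centers into the cores of large vortices; here one only has to produce the centers and verify the distance bound under the metric of Theorem~\ref{metric}.

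So, first I would recall that the metric $d$ lives on the whole atom set $A(R_{G'_0})$ of the radial drawing of $G'_0$, so that $d(a,s)$ is meaningful whenever $a$ is a vertex of $G'_0$ and $s\in S$ is a face (region) of $G'_0$. Then, for each $s\in S$, I would pick any vertex $c_s$ of $G'_0$ incident with the face $s$, and set $C:=\{\,c_s:s\in S\,\}$. Immediately $|C|\le|S|=z$. It remains to check $d(c_s,s)\le\alpha$ for every $s$. Since $c_s$ is incident with $s$, the pair $\{c_s,s\}$ lies in the interior of a closed walk of the radial graph of small, absolute-constant length --- built from the radial edge(s) joining $c_s$ to the radial vertex in $s$, together with a bounded detour along the boundary walk of $s$ --- so by clause~(2) of Theorem~\ref{metric} we get $d(c_s,s)\le 1$, which is at most $\alpha$. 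This gives, for each $s\in S$, a center $c_s\in C$ with $d(c_s,s)\le\alpha$, as claimed.

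The only point requiring care --- and what I would flag as the (minor) main obstacle --- is making sure that clause~(3) of Theorem~\ref{metric} does not intervene, i.e.\ that the short closed walk realising $\{c_s,s\}$ genuinely has length $<2r$, where $r$ is the representativity; otherwise $d(c_s,s)$ would be the truncation value $\theta$ rather than the small value above. This is where we invoke that we are operating inside a $\delta$-rich near-embedding with $\delta$ as large as demanded in the hypothesis of Lemma~\ref{refine}, so that $r\ge\delta$ dwarfs any absolute constant. With that observation in place the lemma follows, and the set $C$ produced here is exactly the set of centers whose $\alpha$-balls the remaining lemmas will enlarge into disks carrying large vortices.
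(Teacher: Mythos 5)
Your proof is correct, but it takes a genuinely different (and more elementary) route than the paper's. The paper proves the lemma by a greedy disk-cover: repeatedly choose a face $s\in S$ not yet covered by any existing disk, centre a radius-$\alpha$ disk at a vertex of $s$, and iterate; since each iteration accounts for at least one new face of $S$, the process halts after at most $z$ rounds, and in addition the resulting centres are pairwise more than (roughly) $\alpha$ apart --- a byproduct the paper notes but which is not stated in the lemma. You instead observe that the lemma, as written, needs no clustering at all: dedicate one centre to each of the $z$ faces, namely any vertex incident to that face, and $d(c_s,s)\le 1\le\alpha$ falls out of clause~(2) of Theorem~\ref{metric}, with the representativity assumption ensuring we never land in the truncation clause~(3). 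Both proofs establish exactly the claimed bound; yours is shorter and even gives a radius of $1$ rather than $\alpha$. The only thing you lose relative to the paper's greedy is the pairwise-far-apart property of the centres, but a glance at the downstream Lemma~\ref{center} shows that its merging argument explicitly tolerates overlapping disks, so that property is not actually relied upon --- your construction feeds into it just as well, and in fact the resulting radius bound $z\cdot 1\le z\alpha$ is no worse.
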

\begin{proof}
We can greedily build the disk cover by repeatedly adding
a disk of radius $\alpha$ centered at a vertex in $s \in S$ that is not already
covered by the disks so far.  When the cover is complete, the centers of the
disks form a set $C$ such that every pair of centers has distance at least
$N_2$ (by construction).
\end{proof}

We now combine this disk cover, and make the cover disjoint,
to obtain our desired local areas of planarity as follows.
\begin{lemma}
\label{center}
In $G'_0$ there is a set $C$ of at most
$z$ vertices
such that, for each $s \in S$, there is exactly
one center $c$ in $C$ for which $d(c, s)\leq z\alpha$.
\end{lemma}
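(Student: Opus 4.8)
The plan is to start from the disk cover produced by Lemma \ref{precenter}, which gives a set $C_0$ of at most $z$ centers such that every face $s\in S$ lies within distance $\alpha$ of some center in $C_0$, and then to ``declutter'' it so that the disks around the chosen centers become pairwise far apart (hence disjoint once we blow them up by a bounded factor). The key observation is the standard greedy/clustering argument: build an auxiliary graph on the $\le z$ centers of $C_0$ by joining two of them whenever their distance (in the metric $d$ of Theorem \ref{metric}) is at most $2z\alpha$; pick a maximal independent-in-distance subset $C\subseteq C_0$ greedily, i.e., repeatedly take a center, discard all centers within distance $2z\alpha$ of it, and recurse. Since each original center is within distance $\alpha$ of some $s\in S$ and within distance $\le 2z\alpha$ of some surviving center in $C$, the triangle inequality (Theorem \ref{metric} gives a genuine metric) yields that every $s\in S$ is within distance $z\alpha$ of at least one $c\in C$.

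The second point — that there is \emph{exactly} one such center — is where the bound $2z\alpha$ in the clustering step is used: if a face $s$ were within distance $z\alpha$ of two distinct surviving centers $c_1,c_2\in C$, then $d(c_1,c_2)\le d(c_1,s)+d(s,c_2)\le 2z\alpha$ by the triangle inequality, contradicting the fact that the greedy procedure keeps only centers pairwise at distance $>2z\alpha$. So one first shows existence (every $s$ is covered, using that $|C_0|\le z$ so the greedy chain through ``within $2z\alpha$'' steps has length $<z$, keeping the accumulated distance below $z\alpha$), and then uniqueness follows immediately from the separation guarantee. One should be slightly careful that the metric is only faithful up to distance $<2r$ (values above that are truncated to $\theta$), but since $\delta$ — and hence the representativity $r\ge\delta$ — is chosen huge compared to $z\alpha$ in Lemma \ref{refine}, all the distances we manipulate stay in the regime where $d$ behaves like an honest metric.

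The main obstacle I expect is bookkeeping the constants so that the chain of ``within $2z\alpha$'' hops, of length at most $z-1$ (since $|C_0|\le z$), really does leave every face within distance $z\alpha$ rather than something like $z\cdot 2z\alpha$; the trick is that we do not chain through all of $C_0$, but use only a single hop from the face's nearest $C_0$-center to its representative in $C$, so the bound is $d(s,c)\le d(s,c_0)+d(c_0,c)\le \alpha + 2z\alpha$, and one simply redefines/absorbs constants (or states the lemma with $z\alpha$ interpreted with the slightly larger implicit constant, consistent with how $\alpha$ is treated as a generic bound throughout). The algorithmic claim is routine: computing the metric $d$ on the radial graph and running the greedy selection both fit in $O(n^2)$ time, matching the running time already claimed for the surrounding lemmas.
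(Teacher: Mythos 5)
Your plan takes a genuinely different route from the paper: you build a greedy maximal independent set of centers at distance threshold $2z\alpha$, whereas the paper iteratively \emph{merges} any two intersecting disks, replacing them by a single disk of radius $r+r'$ centered at a point that is distance $r'$ from the first center and $r$ from the second; since at most $z$ disks of radius $\alpha$ are ever merged, the final radius stays $\le z\alpha$, coverage is preserved through each merge, and the surviving disks are pairwise disjoint.

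Your approach has a real gap that is \emph{not} just constant bookkeeping, contrary to what your last paragraph asserts. With a greedy independent-set construction at threshold $T$, a discarded center of $C_0$ lies within $T$ of a surviving one, so a face $s$ (within $\alpha$ of its nearest $C_0$-center) is only guaranteed a surviving center within radius $\alpha + T$; uniqueness, on the other hand, holds only at radius $T/2$, since $s$ within $R$ of two survivors forces those survivors within $2R$ of each other, which must exceed $T$. To have ``exactly one center within radius $R$'' you would need both $\alpha + T \le R$ and $R \le T/2$, which together give $\alpha + T \le T/2$ --- impossible for any positive $T$. Thus no choice of threshold, and no reinterpretation of the implicit constant, makes your greedy construction yield the statement. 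The underlying difficulty is that the greedy fixes the \emph{separation} of surviving centers independently of the \emph{coverage} radius, whereas the paper's merge operation grows the covering radius in lockstep with disjointness: each merge simultaneously widens the disk (so coverage of the absorbed faces is retained) and eliminates the overlap. To repair your argument you would have to replace the maximal-independent-set step with something that relocates and enlarges disks as it identifies overlaps --- which is essentially the paper's construction.
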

\begin{proof}
The only problem is that we might have some
of vertices in $S$
that are near to (within radius of) more than one center in $C$
(double coverage).

Suppose one disk of radius $r$ and center $c$ intersects another disk
of radius $r'$ and center $c'$.  We replace these disks by a single disk
of radius $r+r'$ and centered at a vertex of distance $r'$ from $c$
and distance $r$ from $c'$.
(Such a vertex exists by the definition of the distance)
Repeating this process, we eventually remove all intersections among disks and moreover, we can
make sure that for each $s \in S$, there is exactly
one center $c$ in $C$.

The maximum radius of any disk increases from the original maximum $N_2$
by at most a factor of the number of
disks in the original $C$, which is at most $z$.
\end{proof}

So far, we have looked at the 2-cell embedding of $G'_0$.
Next, we need to look at the $\alpha$-nearly embedding structure.
In the next lemma, we assume that $G$ has an $\alpha$-nearly embedding structure.
Note that we can easily transfer the distance in $G'_0$ to the distance in $G_0 \cup \W$.
So hereafter, when we talk about ``distance'' in $G_0$, all small vortices are also taken into account (and hence we abuse the ``distance'' in the surface, i.e., extending it to $G_0 \cup \W$).
\begin{lemma}
\label{cut structure}
Let $C$ be a bounded radius disk in $G$
such that $C$ contains exactly one vortex of depth $h$ and all vertices (in $G_0 \cup \W$) of distance  at most $r$ from the center $c\in V(G)$.
Let $G_C$ be the graph inside the disk $C$.
Assume that the representativity of $G'_0$ is at least $2r + 2h$.
Then $(G_C, V(C))$ is a vortex of depth at most $2r + 2h$.
\end{lemma}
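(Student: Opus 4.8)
The plan is to verify the defining property of a $(2r+2h)$-vortex directly. Give the society $(G_C,V(C))$ the cyclic ordering of $V(C)$ induced by the boundary curve $\boundary C$. By Menger's theorem and the definition of a $\rho$-vortex, it suffices to show: for every way of splitting $V(C)$ cyclically into two arcs $A_1,A_2$ (i.e.\ every choice of a base vertex $w\in V(C)$ and a cut $k$), there is a set $S\subseteq V(G_C)$ with $|S|\le 2r+2h$ meeting every path of $G_C$ from $A_1$ to $A_2$; such an $S$ rules out $2r+2h+1$ disjoint crossing paths.

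Fix a split. Its two cut points $x,y$ lie on $\boundary C$ between consecutive society vertices; let $u_x\in A_1$ and $u_y\in A_2$ be the society vertices immediately preceding $x$ and $y$, so that \emph{any} curve inside $\overline{C}$ joining $x$ to $y$ separates the $A_1$-part of $\boundary C$ from the $A_2$-part. Since $C$ has radius at most $r$, both $u_x$ and $u_y$ are at distance at most $r$ from the center $c$ in the metric of Theorem~\ref{metric}; taking shortest radial walks from $c$ to $u_x$ and from $c$ to $u_y$ and concatenating them at $c$ yields a curve in $\overline{C}$ from $u_x$ to $u_y$ that meets $G_0\cup\W$ in at most $2r$ vertices and otherwise runs through faces of $G'_0$. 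Extending along $\boundary C$ to $x$ and to $y$ gives a curve $\gamma$ from $x$ to $y$ inside $\overline{C}$. The hypothesis that the representativity of $G'_0$ is at least $2r+2h$ is spent exactly here: the closed walks obtained from these shortest walks (together with arcs of $\boundary C$) are shorter than twice the representativity, so by Theorem~\ref{metric} they bound well-defined disks, which is what lets us keep $\gamma$ inside $\overline{C}$ and conclude that it separates $\boundary C$ as claimed.

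The only way $\gamma$ can fail to avoid the embedded graph $G'_0$ is by passing through the face of $G'_0$ that accommodates the unique large vortex $V\subseteq C$, i.e.\ through $\int D(V)$. After rerouting I may assume $\gamma$ meets $\int D(V)$ in a single sub-arc, entering at a society vertex $p\in\Omega(V)$ and leaving at $q\in\Omega(V)$; this sub-arc splits $\Omega(V)$ into two arcs. Since $V$ has depth $h$, it has a linear decomposition of depth at most $h$ (Lemma~\ref{lem:omega}); cutting this decomposition at the two positions of $\Omega(V)$ corresponding to $p$ and $q$ produces a set $S_V\subseteq V(G_V)$ with $|S_V|\le 2h$ that separates the two arcs of $\Omega(V)$ inside $G_V$. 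Let $S_0$ be the set of at most $2r$ vertices of $G_0\cup\W$ lying on $\gamma$, and put $S:=S_0\cup S_V$, so $|S|\le 2r+2h$. To check $S$ is a separator, take any path $P$ of $G_C$ from $A_1$ to $A_2$; replacing each maximal subpath of $P$ contained in $G_V-\Omega(V)$ by an arc through $\int D(V)$ joining its two endpoints in $\Omega(V)$ turns $P$ into a curve $P^{*}$ from the $A_1$-part of $\boundary C$ to the $A_2$-part, which therefore crosses $\gamma$. If the crossing is outside $\int D(V)$, it occurs at a common vertex of $P$ and $\gamma$ in $G_0\cup\W$ (an edge-arc of $P^{*}$ can meet the vertex-and-face curve $\gamma$ only at a vertex), hence a vertex of $S_0$; if it is inside $\int D(V)$, then the corresponding vortex-subpath of $P$ runs from one arc of $\Omega(V)$ to the other and so meets $S_V$. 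Either way $V(P)\cap S\neq\emptyset$, so $(G_C,V(C))$ is a vortex of depth at most $2r+2h$.

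The routine part is the planar curve construction from the face-metric of Theorem~\ref{metric}; the main obstacle is the interface with the large vortex — converting the single passage of $\gamma$ through $D(V)$ into a separating set of only $2h$ vortex vertices via its linear decomposition, and carrying this out while keeping $\gamma$ and the auxiliary geodesics confined to $\overline{C}$, which is precisely why the representativity budget $2r+2h$ is needed. (If $C$ contained no large vortex, the argument collapses to the first two paragraphs with $S=S_0$.)
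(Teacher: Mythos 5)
Your proof is correct and takes essentially the same approach as the paper's: bound the surface-side separator by $2r$ using the radius-$r$ metric (the paper via a Menger/face-chain argument, you via an explicit concatenated geodesic), and bound the vortex-side separator by $2h$ via its linear decomposition (Lemma~\ref{lem:omega}), then combine into a $(2r+2h)$-separator for each cyclic split of $V(C)$. Your write-up is more explicit about where the representativity hypothesis is used and how the two pieces of the separator are glued together, but the underlying argument matches the paper's.
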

\begin{proof}
Let us remind that $V(C)$ consists of the vertices on the boundary
of $C$.  Actually, we assume that $V(C)$ is a vertex set on the circumference
of $C$ (i.e., vertices of distance exactly $r$ from a center $c$).
We may assume that $|V(C)| > 2r$ by the definition of the distance.

Note that the graph inside $C$ is planar (up to 3-separations) with one large vortex of depth $h$,
since the representativity of $G'_0$ is at least $N_1 \geq 2r+2h$.
Let us observe that there is no separation $(A,B)$ of order at most $2r$
in $G_0 \cup \W$ such that $A$ induces a disk containing $C$ (i.e., no vertex outside $C$ is in $A$).

Take two vertices $u,v$ of $V(C)$. Since $V(C)$ is a vertex set on the circumference of $C$, $V(C)$ can be {\em partitioned} into two parts $A, B$ such that
$A$ starts $u$ and ends $v$ in the clockwise direction $C$, and $B$ starts $v$ and ends $u$ in the clockwise direction of $C$.
For any partition $A,B$ of $V(C)$ with $|A|, |B| \geq 2r$ ,
suppose there are more than $2r$ disjoint paths in the ``embedded'' graph in $C$ (i.e., the graph of $G_0 \cup \W$ inside $C$).
Then there is a vertex $x \in V(C)$
such that the curve from $c$ to $x$
hits at least $r+1$ paths of these paths.
But this does not happen
since we take $V(C)$ as a vertex set on the circumference
of $C$ (i.e., vertices of distance exactly $r$ from $c$).

Let us consider the vortex $(G_1, \Omega_1)$ of depth $h$,
contained in $C$. By Lemma~\ref{lem:omega},
for any partition $\hat A, \hat B$ of $\Omega_1$ with $|\hat A|, |\hat B| \geq 2h$,
there are at most $2h$ disjoint paths from $\hat A$ to $\hat B$ in $G_1$.
Let $(A,B)$ be the partition of $V(C)$ such
that there are more than $2r+2h$ disjoint paths from $A$ to $B$ in $G_C$.
Note that if such a partition does not exist, we know that $(G_C,V(C))$ must be a vortex of depth $2r+2h$ by Lemma~\ref{lem:omega}.

By the above argument,
there is a separation $(A',B')$ of order
at most $2r$ in the ``embedded'' graph in $C$ (i.e., the graph of $G_0 \cup \W$ inside $C$) with $A \subseteq A'$ and
$B \subseteq B'$. Note that all the vertices in $A \cap B$ are contained in $G_0$.
Actually, by using Menger's theorem,
there is  a $|A' \cap B'|$-separating face chain in $G'_0$
which separates $A$ and $B$. Here, a $k$-separating face chain is
a sequence $v_1f_1\dots,v_kf_k$ such that all $v_i$ are vertices and
all $f_i$ are faces in $G'_0$, and each face $f_i$ is incident with vertices $v_i$ and
$v_{i+1}$.

Let us observe that
if there is a large vortex $W$ of depth $h$, that is attached to a cuff $Q$,
such that $Q \cap (A'-B') \neq \emptyset$ and $Q \cap (B'-A') \neq \emptyset$,
then there is a separation $(W_1,W_2)$ of order at most $2h$ in $W$
such that $(W_1-W_2) \cap (A'-B') \neq\emptyset$ and
$(W_2-W_1) \cap (B'-A') \neq\emptyset$.

This means that if such a large vortex exists,
then there is a separation $(A' \cup W_1,B' \cup W_2)$ of
order at most $2h+2r$ in $G_C$, such that $A$ is contained in $A' \cup W_1$ and $B$ is contained in $B' \cup W_2$.
Thus by Lemma~\ref{lem:omega} we know that $(G_C,V(C))$ must be a vortex of depth $2r+2h$. This completes the proof of Lemma \ref{cut structure}.
%
\end{proof}

The same argument actually gives rise to the following.

\begin{lemma}
\label{cut structure1}
Let $C$ be a bounded radius disk in $G$
such that $C$ contains at most $k$ vortices of depth $h$ and all vertices (in $G_0 \cup \W$) of distance at most $r$ from the center $c\in V(G)$.
Let $G_C$ be the graph inside the disk $C$.
Also assume that the representativity of $G'_0$ is at least $2r + 2kh$.
Then $(G_C, V(C))$ is a vortex of depth at most $2r + 2kh$.
\end{lemma}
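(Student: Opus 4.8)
The plan is to run the proof of Lemma~\ref{cut structure} essentially unchanged, but now keeping track of each of the (at most) $k$ large vortices separately and adding up their contributions. Write $V(C)$ for the set of vertices of $G_0\cup\W$ on the circumference of the disk, i.e. at distance exactly $r$ from the center $c$; we may assume $|V(C)|>2r+2kh$. Fix two vertices $u,v\in V(C)$ and let $(A,B)$ be the induced partition of $V(C)$ into the clockwise arc from $u$ to $v$ and the clockwise arc from $v$ to $u$. By Lemma~\ref{lem:omega}, to prove that $(G_C,V(C))$ is a vortex of depth at most $2r+2kh$ it suffices to show that for every such partition there are at most $2r+2kh$ disjoint $A$--$B$ paths in $G_C$.

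First I would handle the ``surface part'' exactly as in Lemma~\ref{cut structure}. Since $V(C)$ consists of vertices at distance exactly $r$ from $c$ and the representativity of $G'_0$ is at least $2r+2kh\geq 2r$, no curve from $c$ to the circumference can cross more than $r$ disjoint paths, so applying this on the two arcs yields a separation $(A',B')$ of the embedded graph $G_0\cup\W$ inside $C$ of order at most $2r$ with $A\subseteq A'$ and $B\subseteq B'$; equivalently, a $|A'\cap B'|$-separating face chain in $G'_0$ separating $A$ from $B$, with all vertices of $A'\cap B'$ lying in $G_0$ (this is the Menger/face-chain step from the proof of Lemma~\ref{cut structure}).

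Next I would process the large vortices one at a time. Let $W_1,\dots,W_k$ be the large vortices contained in $C$; there are at most $k$ of them, each of depth at most $h$, and by the definition of an $\alpha$-near embedding they are pairwise disjoint and each meets the surface part only along its cuff $Q_j\subseteq G_0$. For each $W_j$: if $Q_j$ lies on a single side of $(A',B')$ there is nothing to do; otherwise $Q_j\cap(A'-B')\neq\emptyset$ and $Q_j\cap(B'-A')\neq\emptyset$, and by Lemma~\ref{lem:omega} applied to the linear decomposition of $W_j$ there is a separation $(W_j^1,W_j^2)$ of $W_j$ of order at most $2h$ with $(W_j^1-W_j^2)\cap(A'-B')\neq\emptyset$ and $(W_j^2-W_j^1)\cap(B'-A')\neq\emptyset$. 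Because the $W_j$ are disjoint and attached only through their cuffs, these local separations glue onto $(A',B')$ independently, producing a separation
$$\Bigl(A'\cup\bigcup_{j=1}^k W_j^1,\ B'\cup\bigcup_{j=1}^k W_j^2\Bigr)$$
of $G_C$ of order at most $2r+2kh$ with $A$ on one side and $B$ on the other. Hence there are at most $2r+2kh$ disjoint $A$--$B$ paths in $G_C$, and by Lemma~\ref{lem:omega} the society $(G_C,V(C))$ is a vortex of depth at most $2r+2kh$.

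The only delicate point — the ``main obstacle'' — is the additivity of the vortex contributions: one must verify that incorporating the $j$-th vortex does not spoil the separation already assembled from the surface part and the earlier vortices. This is precisely where one uses that the large vortices of the near-embedding are pairwise disjoint and communicate with the rest of $G_C$ only along their cuffs in $G_0$; given this, the separating sets simply add, and the bound $2r+2kh$ follows. Everything here is constructive: the face-chain/Menger step and each invocation of Lemma~\ref{lem:omega} run in $O(n^2)$ time, so the modification is polynomial, as needed elsewhere in the paper.
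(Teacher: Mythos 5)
Your proof follows the paper's intended route exactly: the paper gives no separate argument for Lemma~\ref{cut structure1}, saying only that ``the same argument'' as in Lemma~\ref{cut structure} applies, and you have correctly spelled out what that entails --- one application of the surface/face-chain step yielding a separation of order at most $2r$, followed by up to $k$ independent applications of the vortex-separation step (via Lemma~\ref{lem:omega}) contributing at most $2h$ each, glued together using the fact that the large vortices are pairwise disjoint and meet $G_0$ only along their cuffs. The resulting bound $2r+2kh$ and the appeal to Lemma~\ref{lem:omega} to convert the path bound into a depth bound match the paper's proof of Lemma~\ref{cut structure} verbatim, so this is the same argument, not a different one.
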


We also need the following result in \cite{RS12}, (7.6).

\begin{lemma}
\label{delete}
\showlabel{delete}
Let $G$ be a graph embedded into a surface of representativity $\Theta$. Let $r$ be a vertex
in $G$, and take all the vertices $W$ of distance $d$ at most $k \leq \Theta/4$.
Then the induced embedding of $G-W$ has representativity at least $\Theta-4k$, and hence we can define the new distance $d'$ in $G-W$.
Moreover,
for any two vertices $a, b \in V(G)-W$,  $d'(a,b) \geq d(a,b)-4k-2$.
\end{lemma}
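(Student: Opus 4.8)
\medskip
\noindent\textbf{Proof sketch.}
The plan is to first confine $W$ to a small disk of $\Sigma$, and then to transfer genus-reducing curves and radial walks back and forth between $G$ and $G-W$ through that disk, paying a bounded price whenever such an object crosses the hole left behind. The hypothesis $k\le\Theta/4$ enters exactly in the first step: I would argue, by a standard application of the representativity bound, that the union $D$ of all atoms of $R_G$ within distance $k$ of $r$ is a closed disk in $\Sigma$ whose interior is disjoint from $G-W$. Roughly, if $D$ failed to be a disk one could extract from within it a genus-reducing curve meeting $G$ only in vertices and doing so in $O(k)<2\Theta$ of them, contradicting $\mathrm{rep}(G)=\Theta$; here the radial diameter of $D$ is at most $2k$, which is what keeps this curve short. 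It follows that $W=V(G)\cap D$, that $R_0:=\mathrm{int}(D)$ is a single region of the induced embedding of $G-W$ in $\Sigma$ (an embedding of the same Euler genus, so that $\mathrm{rep}(G-W)$ is well defined), and that every vertex of $G-W$ incident with $R_0$ — the \emph{rim} of the hole — is within distance $k+1$ of $r$ in $G$.

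For the representativity bound, I would take any genus-reducing curve $C$ of $G-W$ that meets $G-W$ only in vertices and reroute it off the hole. Each maximal sub-arc of $C$ inside $R_0$ has both endpoints on $\partial D$; I would replace it by an arc running just outside $D$, close to $\partial D$, between those two endpoints. Since both endpoints lie on the rim, a direct count bounds by $4k$ the total number of additional vertices of $G$ met by the rerouted curve $C^{\ast}$, and $C^{\ast}$ is again genus-reducing for $G$. Hence $|C\cap(G-W)|+4k\ge|C^{\ast}\cap V(G)|\ge\Theta$, so $\mathrm{rep}(G-W)\ge\Theta-4k$; this in turn justifies defining $d'$ on $G-W$ via Theorem~\ref{metric}.

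For the distance estimate, fix $a,b\in V(G)-W$ and realize $d'(a,b)$ by a shortest closed walk $\gamma'$ of $R_{G-W}$ with $a$ and $b$ in its interior (if $d'(a,b)$ is the cap value the claimed inequality is trivial). The only steps of $\gamma'$ that are not steps of $R_G$ are the visits to the single region-vertex $v_{R_0}$ of the merged hole, and by minimality $\gamma'$ visits $v_{R_0}$ at most twice: otherwise a sub-walk of $\gamma'$ running between two occurrences of $v_{R_0}$ and containing neither $a$ nor $b$ could be excised. For each passage $u\,v_{R_0}\,w$ I would substitute a shortest $u$–$w$ walk of $R_G$, which by the triangle inequality and the fact that $u,w$ lie on the rim has length at most $2\bigl(d(u,r)+d(r,w)\bigr)\le 4k+4$. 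This yields a closed walk $\gamma$ of $R_G$ that still contains $a$ and $b$ in its interior, with $|\gamma|\le|\gamma'|+2(4k+2)$; dividing by two (and checking that $\gamma$ is short enough to witness a genuine distance, which again uses $k\le\Theta/4$) gives $d(a,b)\le d'(a,b)+4k+2$, i.e.\ $d'(a,b)\ge d(a,b)-4k-2$.

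The main difficulty is the bookkeeping in the last two paragraphs: showing that a minimal radial walk can be taken to enter the hole at most a bounded number of times, keeping $a$ and $b$ in the interior while rerouting, and pinning down the exact additive constants $4k$ and $4k+2$ rather than a mere $O(k)$. The disk claim of the first paragraph also deserves a careful proof from the representativity hypothesis, but this is by now a routine piece of the surface-minors toolkit; everything else is elementary surface topology.
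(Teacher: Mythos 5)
The paper does not prove this lemma: it is quoted verbatim as a known result, with the line ``We also need the following result in \cite{RS12}, (7.6).'' preceding the statement. So there is no in-paper proof against which to compare your attempt, and your work is a blind reconstruction of a result that the author delegates to Robertson and Seymour's \emph{Graph Minors XII: Distance on a surface}.

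That said, your sketch follows the approach one would expect from the surface-distance toolkit of \cite{RS11,RS12}: confine $W$ to a disk $D$ around $r$ using the representativity hypothesis, then reroute genus-reducing curves (for the representativity drop) and radial walks (for the distance drop) around $\partial D$, charging the detour to the radius of the hole. A few points warrant care before this could be called a complete proof. First, the disk claim needs the quantitative statement that a genus-reducing curve confined to the radial $2k$-ball around $r$ has radial length strictly below $\Theta$; your ``$O(k)<2\Theta$'' is not the right comparison (the threshold is $\Theta$, not $2\Theta$), and the hypothesis $k\le\Theta/4$ is exactly what makes a $4k$-length curve non-contradictory, so the constant bookkeeping in that step has to be tight rather than asymptotic. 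Second, the assertion that a shortest closed radial walk realizing $d'(a,b)$ visits the merged hole-vertex $v_{R_0}$ at most twice is plausible but not automatic, since excising a sub-walk between two visits can change which face of the walk contains $a$ and $b$; this is precisely the ``bookkeeping'' you flag, and it is where an actual proof of (7.6) expends most of its effort. Third, when you replace a passage $u\,v_{R_0}\,w$ by a shortest $u$--$w$ walk of $R_G$, the bound you want is on graph-distance in the bipartite radial graph, not on the metric $d$ of Theorem~\ref{metric}; conflating the two is a small but real gap, since $d$ is defined via closed walks and need not equal half the $R_G$-path distance. None of these seem fatal, but they are the places a referee would push.
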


We now show Lemma~\ref{refine}.
Let us assume that we have the structure in Theorem~\ref{thm:3.1} with representativity $\delta$.
Let us first focus on $G'_0$, which is obtained
from $G_0 \cup G_{\alpha+1} \cup \ldots$
by adding all the virtual edges in
$G_0 \cap G_{\alpha+i}$ for $i=1,\dots$. Then we obtain an
embedding of $G'_0$.

Take one vertex $v_i$ in $G'_0$, out of each large vortex $V_i$, and let $S'=S \cup \V''$, where $\V''=\{v_1,\dots,v_{\alpha}\}$.
By Lemma~\ref{center} there is a set $C$ of at most
$z+\alpha$ vertices
such that, for each $s\in S$, there is exactly
one center $c$ in $C$ for which $d(c, s)\leq (z+\alpha) \alpha$, and no other center $c'$ in $C$
for which $d(c', s)\leq (z+\alpha) \alpha$.
Thus we can take disjoint disks and whose center is in $C$, such that each of them is radius at most $(z+\alpha) \alpha$ from the center in $C$.
By Lemma~\ref{cut structure1}, $(G_C,V(C))$ is a vortex of depth
$2(z+\alpha) \alpha +2\alpha^2=4\alpha^2+2z\alpha$. Since there are at most $z+\alpha$
such disks, if we delete them, then by Lemma~\ref{delete}, the representativity of the resulting embedding
is $\delta - 8(z+\alpha)(4\alpha^2+2z\alpha)\geq \delta/2$.

So Lemma~\ref{refine} follows (with representativity $\delta/2$).
It follows from Lemma~\ref{lem:omega} that
there is an $O(n^2)$ time algorithm to obtain, for each $(G_C, V(C))$, a linear decomposition
of depth $4\alpha^2+2z\alpha$ (by Lemma \ref{cut structure1}). Since there are at most $\alpha+z$ large vortices (and moreover $\alpha+z$ is regarded as a fixed constant)  we obtain a desired
$O(n^2)$ algorithm as in Lemma~\ref{refine}.\qed

\section{Dealing with small vortices in a surface}
\label{smallv}

Let $G$ be a minimal counterexample to the odd Hadwiger's conjecture for the case $t \geq 6$, and
assume that $G$ has an $\alpha$-near embedding structure as in Theorem \ref{thm:3.1} with a graph
$R=K_{16k\sqrt{\log k}}$ (throughout this section, we follow the notations in Section \ref{structthm}) and $\delta \geq 10|Z|^22^{|Z|}d(3t,\alpha)$, where $d(.)$ comes from Theorem \ref{gm7} below.
The main purpose of this section is to deal with small vortices $\W$.
Let $Z=\{z_1,\dots,z_l\}$ with $l \leq \alpha$.
For each $G_i \in \W$, if there is an induced connected
bipartite graph $Q$ in $G_i$ such that $Q$ is an induced block,
%
then we take such a bipartite graph. More precisely, if we take a block decomposition of $G_i$ (i.e., take a tree-decomposition $(T,Y)$
such that for each $tt' \in T$, $|Y_t \cap Y_{t'}| =1$), then we take all $Y_t$ that are bipartite.
Note that $G_i$ may contain two or more such induced bipartite graphs.
Let $\Q$ be the union of these induced bipartite blocks in $\W$.

\begin{lemma}
\label{even1}
\showlabel{even1}
If $|\Q| > |Z|^22^{|Z|}$ then there are at least $|Z|^2$
components in $\Q$ such that all have neighbors in the same vertices $Z'$ in $Z$. Moreover $Z'$ can be reduced onto a clique of order at least $t-3$, via the odd-minor-operations, and at least $|Z|^2/2$ components in $\Q$ have
neighbors in all of the vertices in the clique and are not used to
create the clique.
\end{lemma}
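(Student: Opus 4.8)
The plan is to mimic the proof of Lemma~\ref{re4}: first isolate a large uniform subfamily of $\Q$ by pigeonhole, then use Lemma~\ref{conn1} (together with the fact that small vortices have length at most $3$) to see that this subfamily's common neighbourhood in $Z$ is large, and finally feed the components into the reduction machinery of Facts~\ref{easy2} and \ref{easy3} to manufacture the clique.

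\emph{Pigeonhole.} To each $Q\in\Q$ attach the set $N(Q)\cap Z\subseteq Z$ of apex vertices adjacent to it; there are at most $2^{|Z|}$ possible values. Since $|\Q|>|Z|^22^{|Z|}$, some value $Z'\subseteq Z$ is realised by a subfamily $\Q'\subseteq\Q$ with $|\Q'|>|Z|^2$. Fix such $Z'$ and $\Q'$; every component of $\Q'$ is adjacent in $G$ to exactly the vertices $Z'$ of $Z$.

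\emph{$Z'$ is large.} Let $Q\in\Q'$; it is a connected bipartite induced block of some small vortex $G_i\in\W$. Since $G_i\cap G_j\subseteq G_0$ for $j\neq i$, any vertex outside $V(Q)$ adjacent to $V(Q)$ lies either in $Z$ or in $V(G_i)\setminus V(Q)$, and the latter meet $V(Q)$ only at cut vertices of $G_i$ on $Q$; because $|\Omega_i|\le 3$ one can detach $Q$ from the rest of $G_i$ across a bounded set $Y_Q$ of such cut vertices (indeed a single one if $Q$ is taken to be a leaf block of the block--cut tree reachable from $\Omega_i$). This gives a separation $(A,B)$ of $G$ with $A-B\subseteq V(Q)$ bipartite and $A\cap B\subseteq Z'\cup Y_Q$. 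By Lemma~\ref{conn1} a separation with a bipartite side has order at least $t-2$, and Lemma~\ref{re3} disposes of the borderline case of order $t-2$ (forcing a vertex of $A-B$ to dominate $A\cap B$); carefully tracking $|Y_Q|$ through these two lemmas yields $|Z'|\ge t-3$.

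\emph{Building the clique.} We reduce $Z'$ onto a clique by treating the non-edges of $Z'$ one at a time, spending one component of $\Q'$ on each. Given a current non-edge $uv$, pick an unused $Q\in\Q'$; as $u$ and $v$ are both adjacent to the connected graph $Q$ there is a $u$--$v$ path with interior in $Q$. If some such path is odd, Fact~\ref{easy2} lets us reduce the $Q$-side of the corresponding separation onto its boundary while adding the edge $uv$; if every such path is even, Fact~\ref{easy3} lets us instead identify $u$ and $v$. Each step strictly lowers the number of non-edges, so the process halts with a clique $Z''$ after at most $\binom{|Z'|}{2}\le |Z|^2/2$ steps; in particular at least $|Z|^2/2$ components of $\Q'$ are never used, and each of these is still adjacent to every vertex of $Z''$ (since its $Z$-neighbourhood is $Z'$). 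Finally, arguing as in Lemma~\ref{re4} (using that $\ge|Z|^2/2$ spare components remain and the recolouring idea behind Lemmas~\ref{mind1} and \ref{minimal}, since a \emph{forced} identification would otherwise let us extend any $(t-1)$-colouring of $G$ minus the interior of the offending component), the identifications can be avoided, so $Z''$ is obtained from $Z'$ by edge additions only and $|Z''|=|Z'|\ge t-3$. (If $|Z''|\ge t$ this clique already is an odd $K_t$-model, but the statement asks only for order $\ge t-3$.)

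The main obstacle is the middle step: one must show that the portion of the separator of a bipartite block which lies inside its small vortex is genuinely of bounded size (this is exactly where the length bound $3$ on small vortices is used) and then extract the last couple of units from the borderline case of Lemma~\ref{re3}. The ``no identification needed'' claim in the last step is a second, lighter, subtlety, handled by the same recolouring argument that pervades Section~\ref{smallv}.
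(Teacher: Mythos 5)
Your proof takes a genuinely different route from the paper's, and the route has two real gaps.

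The paper does not attempt to bound $|Z'|$ directly. It first builds the clique $Z_2$ from $Z_1=Z'$ by performing identifications (via even paths) as far as possible and only then adding edges (via odd paths); the resulting $Z_2$ is a quotient of $Z_1$ and may well be strictly smaller. The heart of the paper's proof is then a colouring contradiction: if $|Z_2|\le t-4$, delete the interiors of the remaining $\Q'$-components except for one ``special'' vertex each, $(t-1)$-colour the reduced graph by minimality, and show by a case analysis (at most $t-2$ colours on $Q\cup Z_2$, or exactly $t-1$ but no vertex of $Q-G_0$ seeing three distinct colours on $Q\cap G_0$) that the colouring extends back to each $Q$, contradicting minimality of $G$. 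This argument is what forces $|Z_2|\ge t-3$, and it is entirely absent from your sketch.

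Two specific problems. First, your ``$Z'$ is large'' step replaces the colouring argument by an appeal to Lemma~\ref{conn1}. That lemma would give $|Z'\cup Y_Q|\ge t-2$, where $Y_Q$ is the part of the separator of the block $Q$ lying inside $G_i\cup G_0$. You yourself flag that bounding $|Y_Q|$ is ``the main obstacle,'' but the resolution you propose (take $Q$ to be a leaf block reachable from $\Omega_i$) does not apply to the $\Q$ of the paper, which is defined as \emph{all} bipartite blocks of the block decomposition, and even a single block can contain several cut vertices plus up to three society vertices. So ``$|Z'|\ge t-3$'' is not established. Second, and more seriously, your claim that ``the identifications can be avoided, so $Z''$ is obtained from $Z'$ by edge additions only and $|Z''|=|Z'|$'' is unjustified and in fact contrary to what happens: a bipartite block $Q$ with $u,v$ attached to the same side of its bipartition furnishes only even $u$--$v$ paths, so the only odd-minor-operation available from $Q$ for the pair $\{u,v\}$ is identification. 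The paper accepts this and argues about the \emph{quotient} clique $Z_2$; you instead gesture at ``the recolouring idea behind Lemmas~\ref{mind1} and \ref{minimal},'' which is too vague to stand in for the explicit extension argument that the lemma actually hinges on. In short, both the separator-count route to $|Z'|\ge t-3$ and the ``no identifications'' claim need the missing colouring contradiction, and your sketch does not supply it.
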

\begin{proof}
Suppose $|\Q| \geq |Z|^22^{|Z|}$. Then by the pigeon hole principle, there is a set $\Q' \subset \Q$
with $|\Q'| \geq |Z|^2$ such that for any two $Q,Q' \in \Q'$, $N(Q) \cap Z =N(Q') \cap Z = Z_1 \subset Z$.

We now reduce $Z_1$ into a clique $Z_2$, via the odd-minor-operations.
We first keep identifying any two vertices $u,v \in Z_1$ by taking $Q \in \Q'$ and then taking an even path
in $(Q-G_0) \cup \{u,v\}$, if $uv$ is not present (see Fact \ref{easy3}). In this case, $(Q-G_0) \cup \{u,v\}$ is
reduced to the vertex that is obtained from the identification of $u$ and $v$, via the odd-minor-operations.

When we stuck, we still have at least $|Z|^2-|Z|$ components $Q$ remaining in $\Q'$ (see Lemma \ref{re2}).
Hence for each of these remaining components $Q \in Q'$,
we can add an edge $uv$ (with $u,v \in Z_1$) by taking an odd path in $(Q-G_0) \cup \{u,v\}$ (see Fact \ref{easy2}), if $uv$ is not present (Note that
there is no even path in $Q \cup \{u,v\}$, otherwise, we had already identified $u$ and $v$ as above).
In this case, $(Q-G_0) \cup \{u,v\}$ is
reduced to the edge $uv$, via the odd-minor-operations.

This way, we can reduce $Z_1$ onto a clique $Z_2$ via the odd-minor-operations. Let $G''$ be the resulting graph.
So all the components in $\Q'$ that are used to create this clique $Z_2$ are now gone.

In fact, for our technical purpose, we require the following: if there is a vertex $x$ in $Q-G_0$ that is adjacent to at least two vertices of $G_0$, then we first delete all edges incident with $x$ except for the ones with endpoints in $G_0$. Then we contract all these edges
into a single point. There is still above an even path or an odd path
in $(Q - G_0-\{v\}) \cup \{u,v\}$, for otherwise, there is a separation $(A,B)$ of order one in $Q$, in which case, our definition implies that
$Q$ have to be both $A$ and $B$, but not $A \cup B$.

If $|Z_2| \geq t$, we are done, as this is an odd $K_t$-model.

Let us observe that after all the above odd-minor-operations, each remaining component in $\Q'$ has neighbors to all the vertices of the clique $Z_2$, but no neighbors in $Z-Z_2$ in $G''$. Note that there are at least $|Z|^2/2$
remaining components in $\Q'$.

If $|Z_2| \leq t-4$, then for each remaining component $Q \in \Q'$, we delete $Q-G_0$, except for one ``special'' vertex that
is adjacent to all the vertices in $Z_2 \cup (Q \cap G_0)$ (if it exists).
Let $G'$ be the resulting graph of $G''$. By the minimality, $G'$ has a $(t-1)$-coloring $\sigma$.

For each $Q \in \Q'$ (including components to create a clique $Z_2$),
we now color $Q$ which extends the coloring $\sigma$.
Since $|Q \cap G_0| \leq 3$, we contract two vertices of $Q \cap G_0$ into one vertex, if there is a vertex that is adjacent to two of $Q \cap G_0$, and we leave one ``special'' vertex if it exists for the remaining component in $\Q'$,
either
\begin{itemize}
\item
at most $t-2$
colors are precolored in $Q \cup Z_2$ from $\sigma$, or
\item
exactly $t-1$ colors are precolored in $Q \cup Z_2$ from $\sigma$, but there is no vertex in $Q-G_0$ that are adjacent to
three vertices in $G_0 \cap Q$ with three different colors.
\end{itemize}

Let us first consider the first case.
If at most $t-3$ colors are used in  $Q \cup Z_2$ from $\sigma$, we can color $Q-G_0$ with two left colors.
So it remains to consider the case that exactly $t-2$ colors
are used in  $Q \cup Z_2$ from $\sigma$.
Since $Q \cap G_0$ receives at least two colors, say $a,b$, that are not used in the coloring of $Z_2$ from $\sigma$,
we can color one partite set $A_1$ of $(A_1, B_1)$ of $Q-G_0$ as $a$, and we can color $B_1$ as $x$, which is not used in
the coloring of $Q \cup Z_2$. So we obtain a $(t-1)$-coloring of each $Q$ which is consistent with the coloring $\sigma$.

Let us consider the second case. If some two vertices $x,y$ of
$Q \cap G_0$ are in a different partite set of the bipartition of $Q$,
say $x \in A_1$ and $y \in B_1$, then we color $A_1$ with $\sigma(x)$ and color $B_1$ with $\sigma(y)$ (except for the vertex $(Q \cap G_0)-\{x,y\}$ which has own color). If not, say, all the vertices $x,y,z$ of $Q \cap G_0$ are in $A_1$, then
we first pick up $\sigma(x)$ for $A_1-\{y,z\}$, and for each vertex in $B_1$, we pick up one of $\sigma(y), \sigma(z)$, which is possible because either $\sigma(y)$ and $\sigma(z)$ are same, or
there is no vertex in $B_1$ that is adjacent to both of $y,z$. So we obtain a $(t-1)$-coloring of $Q$ which is consistent with the coloring $\sigma$.

This yields
a $(t-1)$-coloring of $G$, a contradiction.

So $t-1 \geq |Z_2| \geq t-3$, and at least $|Z|^2/2$ components in $\Q'$
have neighbors to all the vertices of the clique $Z_2$, but no neighbors in $Z-Z_2$. Moreover, they are not used to create the clique $Z_2$.
\end{proof}

The above proof can handle the following case too:
\begin{quote}
when $|Z_2|=t-3$ or $t-2$, and there is no
vertex in $Q \in \Q$ that is adjacent to at least $t-3$ vertices in $Z_2$.
\end{quote}
To see this, let us first consider the case when $|Z_2|=t-3$.
For each vertex $x$ in $Q-G_0$, we have one color
that is used in $Z_2$ but that is still valid in $(Q-G_0) \cup Z_2 \cup \{x\}$.
We can plug this fact to the above proof to show that we obtain a $(t-1)$-coloring of $Q$ which is consistent with the coloring $\sigma$.
For example, if $Z_2 \cup (Q \cap G_0)$ uses at most $t-2$ colors from $\sigma$, we can
apply the same proof. On the other hand, even if $Z_2 \cup (Q \cap G_0)$ uses exactly $t-1$ colors from $\sigma$, we still have a valid color for each vertex
in $Q-G_0$. We omit further details, as they are just case-analysis.

Must of the same thing happens to the case when $|Z_2|=t-2$.
The point is that there must exist one color $x$ that is not used in
$Z_2$ in the coloring $\sigma$. So we can color one partite set with $x$
(except for $Q \cap G_0$). For the other partite set, we have two
colors available, and these two colors do not contain $x$. So we can
proceed in the same way.

Hereafter we use the following fact from this proof:
\begin{quote}
When $|Z_2|=t-3$ or $t-2$, and there is a
vertex in $Q \in \Q$ that is adjacent to at least $t-3$ vertices in $Z_2$.
\end{quote}

When $|Z_1|=t-1$, the situation is even simpler, but we deal with this case right after Lemma \ref{t52} later.
\drop{
We now assume the following.

\medskip

{\bf Assumption (*)} Suppose there are at least 5 cuffs, and moreover either
\begin{enumerate}
\item
$G'_0$ is embedded in a surface of Euler genus $g$ with representativity at least $d(3t,g)$ ($d(.)$ comes from Theorem \ref{gm7} below), or
\item
$G'_0$ is embedded into sphere.
\end{enumerate}
}

\medskip

Next, we need the following from \cite{RS7}.
A \DEF{forest} in a surface $\Sigma$ is an embedded graph in $\Sigma$ with no cycles. Two
forests $H_1,H_2$, are \DEF{homotopic} in $\Sigma$ if
\begin{enumerate}
\item[(i)]
$V(H_1) \cap bd(\Sigma) = V(H_2) \cap bd(\Sigma)$,
\item[(ii)]
for any two vertices $s, t \in V(H_1) \cap bd(H_1)$, there is a path of $H_1$ from $s$ to $t$ if and
only if there is such a path in $H_2$, and
\item[(iii)]
for any two vertices $s, t \in V(H_i) \cap bd(H_i)$, if there is a path $P_i$ of $H_i$ from $s$ to $t$ for $i=1,2$,
then $P_1$ is homotopic to $P_2$ in $\Sigma$.
\end{enumerate}
We say that forests $H_1, H_2$ in $\Sigma$ are \DEF{homoplastic} if there is a homeomorphism $\alpha : \Sigma \hookrightarrow \Sigma$ such
that
\begin{enumerate}
\item[(i)]
$\alpha(x) =x$ for all $x \in bd(\Sigma)$, and
\item[(ii)]
the forest $\alpha(H_1)$ is homotopic to $H_2$ in $\Sigma$.
\end{enumerate}

The equivalence classes of this equivalence relation are called \DEF{homoplasty classes}.
Let $G$ be a graph in $\Sigma$, and let $H$ be a forest in $\Sigma$. If there exists a forest
$H’$ homoplastic to $H$ which is a subgraph of $G$, we say that \DEF{$H$ is $G$-feasible}.

We can now state the result from \cite{RS7} (see (6.1)).
\begin{theorem}
\label{gm7}
\showlabel{gm7}
Let $G$ be a 2-connected graph on a surface $\Sigma$ of Euler genus $g$. Let $A$ be a vertex set of order $k \geq 3$ such that there are at least three faces $F$ to cover $A$. If one face $C$ contains $l$ vertices in $A$, then there is no closed curve of order at most $l-1$ whose interior contains $C$.

Then there is a function $d(|A|,g)$ that satisfies the following:
Suppose that representativity of the drawing of $G$ in a surface $\Sigma$ is at least $d(|A|,g)$ and distance between any two faces in $F$ is at least $d(|A|,g)$.
Then for any $Y$-forest with $V(G) \cap bd(\Sigma) = V(Y) =bd(\Sigma)=A$, $Y$ is $G$-feasible.
\end{theorem}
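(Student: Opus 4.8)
This is Theorem~(6.1) of Robertson and Seymour's \emph{Graph Minors VII} \cite{RS7}, so in the paper itself I would simply invoke it; but here is the line of attack one would take to prove it from scratch. The goal is a linkage-type statement: realize a prescribed homotopy pattern of disjoint trees joining the terminal set $A$ inside a graph $G$ that is ``richly connected'' on $\Sigma$. First I would split the target forest into arcs: each tree of the $Y$-forest spanning a subset of $A$ can be grown from a root terminal by successively attaching an arc from $A$ to the current partial tree whose homotopy class is prescribed, so it suffices to solve the disjoint paths problem on $\Sigma$ with specified homotopy, adding one arc at a time while preserving disjointness.

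Next I would convert the representativity hypothesis into usable grid structure. Choosing $d(|A|,g)$ large, representativity at least $d(|A|,g)$ forces, around each terminal face $C\in F$ and around each genus-reducing simple closed curve, a large nest of concentric cycles of $G$ (a ``planar patch''); since the faces in $F$ are pairwise at distance at least $d(|A|,g)$ in the metric of Theorem~\ref{metric}, these patches can be chosen pairwise disjoint, and the hypothesis that no closed curve of order at most $l-1$ encloses a face carrying $l$ terminals guarantees that the $l$ strands leaving such a face genuinely have $l$ independent exits through its surrounding patch. Inside a single patch, any pattern drawable in a disk is realizable, by the classical fact that a sufficiently large planar grid accommodates every bounded planar linkage.

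Finally I would induct on the Euler genus of $\Sigma$ (equivalently, on how much of the handle/crosscap structure $Y$ is forced to use). If $Y$ is null-homotopic relative to the terminal patches, route it entirely inside the patches as above. Otherwise some arc of $Y$ is homotopic to a genus-reducing curve; realize that arc using a non-separating cycle supplied by the representativity bound, cut $\Sigma$ and $G$ along it to obtain a surface of strictly smaller Euler genus $g'<g$, and apply the inductive hypothesis to the remaining arcs there, using a Lemma~\ref{delete}-style estimate to certify that enough representativity and enough of the patches survive the cut to meet the smaller threshold $d(|A|,g')$. The main obstacle is precisely this last step: quantitatively controlling the degradation of representativity and of the grid structure under each cut, and keeping the bookkeeping of homotopy classes consistent, so that the induction closes with an explicit (if astronomically large) function $d(|A|,g)$. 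This quantitative surgery is the technical core of \cite{RS7}, which is why the clean move in the present paper is to cite~(6.1) of \cite{RS7} rather than to reprove it.
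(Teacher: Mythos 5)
Your proposal matches the paper's treatment: the paper does not prove Theorem~\ref{gm7} at all but simply states it as ``the result from \cite{RS7} (see (6.1))'', which is exactly the move you recommend. The proof sketch you append (arc-by-arc realization of the $Y$-forest, representativity forcing disjoint planar patches around the terminal faces, and induction on Euler genus with bookkeeping of how representativity degrades under surgery) is a reasonable high-level account of the Robertson--Seymour machinery, but none of it is needed to match the paper, which relies entirely on the citation.
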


For each $G_i \in \W$, let us take an induced subgraph $R$ which contains an odd cycle and which does not contain a subgraph in $\Q$. Let $\R$ be the union of these induced subgraphs in $\W$.

We prove the following analogue of Lemma \ref{even1}.

\begin{lemma}
\label{odd1}
\showlabel{odd1}
There are no vertex disjoint subgraphs $O_1, \dots, O_l \in \R$ with $l \geq 2^{|Z|} |Z|^2$ such that
any two of $O_1, \dots, O_l$ are of distance at least $d(3t,g)+2t^2+12t$.
\end{lemma}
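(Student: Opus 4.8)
The proof goes by contradiction and runs parallel to that of Lemma~\ref{even1}, the new ingredient being that each $O_i$ is non-bipartite and that the $O_i$ are spread out in the surface. So suppose subgraphs $O_1,\dots,O_l$ with $l\ge 2^{|Z|}|Z|^2$ and pairwise distance at least $d(3t,g)+2t^2+12t$ exist. Since there are at most $2^{|Z|}$ possibilities for the set $N(O_i)\cap Z\subseteq Z$, the pigeonhole principle yields at least $|Z|^2$ of the pieces, say $O_1,\dots,O_{|Z|^2}$, with a common neighbourhood $Z':=N(O_i)\cap Z$ in $Z$. Each such $O_i$ is a non-bipartite induced block, hence $2$-connected, so for every pair $u,v\in Z'$ — each of which has a neighbour in $O_i$ — the graph $O_i\cup\{u,v\}$ contains both an odd and an even path between $u$ and $v$. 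Exactly as in Lemma~\ref{even1}, and using Facts~\ref{easy2} and~\ref{easy3}, we may therefore reduce $Z'$, via odd-minor-operations, onto a clique $Z''$ on the vertex set $Z'$, adding each missing edge through an odd path inside a distinct $O_i$; this consumes at most $\binom{|Z|}{2}<|Z|^2/2$ of the pieces and leaves at least $|Z|^2/2$ of them untouched, each still non-bipartite, still with $Z$-neighbourhood exactly $Z'$, now having a neighbour on every vertex of the clique $Z''$, and still pairwise at distance at least $d(3t,g)+2t^2+12t$.

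If $|Z''|\ge t$ then $Z''$ already exhibits a $K_t$-minor — in fact an odd $K_t$-model with singleton nodes — contradicting conditions (2) and (3) in the definition of a minimal counterexample, and we are done. So assume $|Z''|\le t-1$; as in Lemma~\ref{even1}, if $t-|Z''|$ were larger than an absolute constant then a $(t-1)$-colouring of $G$ with one of the remaining pieces deleted would extend — here using that each remaining $O_i$ lies in a vortex of bounded width and hence has bounded chromatic number, together with Lemma~\ref{re3} — contradicting minimality; hence $|Z''|$ is within a constant of $t$. Now I would invoke the surface structure. The remaining pieces sit near pairwise far-apart faces of $G'_0$, whose representativity is at least $\delta\ge 10|Z|^22^{|Z|}d(3t,\alpha)$; applying Theorem~\ref{gm7} with a vertex set $A$ of size $3t$ distributed over (at least three of) these far faces, every $Y$-forest on $A$ is $G'_0$-feasible, so one can route, inside $G'_0$, a system of disjoint trees joining $t-|Z''|$ of the remaining pieces both to one another and, together with the clique $Z''$, supplying all $\binom{t}{2}$ connections of a $K_t$-minor whose $t$ nodes are the $|Z''|$ singletons of $Z''$ and the $t-|Z''|$ surface-routed pieces. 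Finally, using that each $O_i$ contains an odd cycle and that the extra $2t^2+12t$ in the distance bound leaves room to reroute each joining path through that cycle when needed, one adjusts parities so that, after replacing each piece by a suitable connected bipartite subgraph containing its attachment vertices in one colour class, all $\binom{t}{2}$ joining edges become monochromatic; this is an odd $K_t$-model, contradicting the assumption that $G$ contains no odd $K_t$-minor.

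The main obstacle is the very last step: Theorem~\ref{gm7} controls only the topology of the routing, not its parity, so all the real work lies in simultaneously arranging the $\binom{t}{2}$ joining edges to be monochromatic. This is the exact point where the odd cycles inside the $O_i$ and the extra slack $2t^2+12t$ — enough room for $O(t^2)$ parity-correcting detours, each costing $O(t)$ — in the distance bound are needed; the transition here from a plain $K_t$-minor to an odd one mirrors the step from Lemma~\ref{re2} to Lemma~\ref{re1}. Everything preceding that step is a routine adaptation of the proof of Lemma~\ref{even1}, and the slightly different threshold in the ``$|Z''|$ within a constant of $t$'' sub-case accounts for the non-bipartiteness of the pieces.
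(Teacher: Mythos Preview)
Your opening --- pigeonhole to get a common neighbourhood $Z'\subseteq Z$, then reduce $Z'$ to a clique via odd paths through the pieces, and finish if $|Z'|\ge t$ --- matches the paper. The divergence, and the real gap, is everything after the case split $|Z'|\le t-1$.

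First, your lower bound on $|Z''|$ does not go through. In Lemma~\ref{even1} that bound worked because the pieces were bipartite: after spending $|Z_2|$ colours on the clique and at most two on the boundary with $G_0$, two more colours sufficed for the piece. Here the pieces are arbitrary non-bipartite graphs sitting in small vortices of $\W$; these have at most three society vertices but \emph{unbounded} interior, so ``bounded chromatic number'' is simply false, and Lemma~\ref{re3} (whose hypothesis is that $A-B$ is bipartite) does not apply. You therefore have no lower bound on $|Z'|$.

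Second, and more seriously, the paper does \emph{not} build an odd $K_t$-model in this case at all. Your parity-adjustment sketch would require independently controlling the parity of $\binom{t}{2}$ joining paths using odd cycles that live inside the \emph{nodes} of the model; but a node's $2$-colouring is one global choice, so the parities of the $t-1$ edges incident with it are coupled, not independent, and a single odd cycle per node cannot decouple them. This is exactly why, later in Lemma~\ref{t5}, the odd-$K_6$ construction routes each \emph{edge} of the model through its own far-apart odd face --- a resource you do not have available here.

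What the paper actually does is a colouring argument in the spirit of Lemma~\ref{minimal}. Set aside at least $3t$ of the unused pieces as $\R''$, delete $R-G_0$ for every $R\in\R''$, and $(t-1)$-colour the resulting graph $G'$ by minimality; call this colouring $\sigma$. Now fix one deleted piece $R$ with attachment vertices $v_1,v_2,v_3\in G_0$. Using the \emph{other} pieces in $\R''\setminus\{R\}$ (each of which supplies both odd and even $u$--$v$ paths for $u,v\in Z'$) together with surface routing from Theorem~\ref{gm7} --- this is where the distance $d(3t,g)$ and the slack $2t^2+12t$ are actually spent --- one reduces $\{v_1,v_2,v_3\}\cup Z'$, via odd-minor-operations in $G-(R-G_0)$, to a clique in which the vertices of each colour class of $\sigma$ have been identified. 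Minimality applied to $R$ together with this virtual clique then gives a $(t-1)$-colouring of $R$ consistent with $\sigma$. Doing this for every $R\in\R''$ yields a $(t-1)$-colouring of $G$, the desired contradiction. Theorem~\ref{gm7} is invoked only to route three disjoint trees from $v_1,v_2,v_3$ to three disjoint batches of pieces; no parity control on the surface paths is needed, because all parity flexibility is supplied by the non-bipartite pieces at the far ends.
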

\begin{proof}
Suppose such $l$ disjoint subgraphs exist. By the pigeon hole principle, there is a set $\R' \subset \R$
with $|\R'| \geq |Z|^2$ such that for any two $R,R' \in \R'$, $N(R) \cap Z =N(R') \cap Z = Z' \subset Z$.
By the definition of $\R$ and Fact \ref{easy2}, for any $R \in \R'$ and
for any two vertices $u,v \in Z'$,
there is an odd path between $u$ and $v$ in $R \cup \{u,v\}$.
We can thus keep adding an edge $uv$ (with $u,v \in Z'$) from $R \in R'$ by taking an odd path in $R \cup \{u,v\}$, if $uv$ is not present. In this case, $R$ is reduced to the edge $uv$, via the odd-minor-operations. This way, we can reduce $Z'$ onto a clique, via the odd-minor-operations (see Lemma \ref{re1})\footnote{Let us observe that we could even get an even path between $u$ and $v$ in $R \cup \{u,v\}$, and as in Lemma \ref{even1},
we could identify $u$ and $v$ via this even path, if the edge $uv$ is not present.}.
Note that in these reductions, some vertices in $G_0$ may be used
(because $R$ could contain at most three vertices in a face of $G_0$).

If $|Z'| \geq t$, we are done as we obtain an odd $K_t$-model. So $|Z'| \leq t-1$.
This means that there is a vertex set $\R'' \subseteq \R'$ of order at least $3t$ such that every $R \in \R''$ is not
used to create a clique on $Z'$ (note that $t \geq 6$).

We again consider the original graph $G$.
Let $G'$ be the graph obtained from $G$ by deleting all $R - G_0$ for $R \in \R''$.
We first color $G'$. By the minimality of $G$, $G$ has a $(t-1)$-coloring $\sigma$.

As in the proof of Lemma \ref{minimal},
this coloring partitions the vertices of $Z'$ into color classes $V_1, \dots, V_l$ with $l \leq t-1$.
For each $V_i$, we can identify into a single point by taking even paths in some components in $\R'-\R''$, as above (see Fact \ref{easy3}).
On the other hand, we can also add an edge between $V_i$ and $V_j$ ($i\not=j$)
by taking an odd path in some $R' \in \R'-\R''$ as above (see Fact \ref{easy2}).
So we can reduce $Z'$ to a clique (using only components in $\R'-\R''$) such that
all $V_i$ are reduced into a single vertex. Again, note that in these reductions, some vertices in $G_0$ may be used.
We delete such vertices.
By Lemma \ref{delete}, this results in making distance smaller,
but only $4 \times t^2/2=2t^2$ for distance between any two vertices in the current graph.

Let us color $R \in \R''$. We pick up three distinct sets $L_i$ of $|Z'|$ elements in $\R''-\{R\}$.
So $|L_1|=|L_2|=|L_3|=|Z'| \leq t-1$ and $L_1, L_2, L_3$ are disjoint.
Let $R_i$ be the faces that accommodate all the sets in $L_i$ for $i=1,2,3$.
So any two faces of $R_i$ are of pairwise distance at least $d(3t,g)+12t$ in the current graph
and moreover any face in $R_i$ and any face in $R_j$ (with $i \not=j$)
are of distance at least $d(3t,g)+12t$ in the current graph.

Let $v_1, v_2, v_3$ be $R \cap G_0$ (if $|R \cap G_0| \leq 2$, we just appoint $v_1, v_2$ or just $v_1$ if $|R \cap G_0|=1$).
For a technical reason, we take a closed curve $C$ whose interior
includes $R \cap G_0$ and such that $|C|=3t-3$ and subject to that, the number of vertices interior of $C$ is as many as possible.
Let us partition the vertices on $C$ into three parts $L'_1, L'_2, L'_3$ such that all $L'_i$ are consecutive along $C$ (in a natural way).
So all the vertices on $C$ are in the clockwise order, and $L'_1, L'_2, L'_3$ partition the ordering of the vertices on $C$ into three equal size sets.
In the interior of $C$, by our choice of the closed curve $C$,
it is straightforward to see that there are three disjoint paths $P_i$ from $v_i$ to $L'_i$ (for $i=1,2,3$) such that $P_i$ does not contain
any vertex in $L'_{i-1} \cup L'_{i+1}$.
We now delete interior of $C$, and add three vertices $v'_i$ to the face bounded by $C$ such that $v'_i$ has neighbors to all of $L'_i$ (for $i=1,2,3$).
By Lemma \ref{delete}, this results in making distance smaller, but only $4 \times 3t=12t$
for distance between any two vertices in the current graph.

We now apply Theorem \ref{gm7} to the resulting graph, with respect to a forest obtained as follows:
it consists of three trees $T_1, T_2, T_3$ such that $T_i$ consists of a star with the center $v'_i$ and
each face in $R_i$ must contain a leaf. So Theorem \ref{gm7} implies that such a forest exists. Note that the distance condition is satisfied
because we only loose $2t^2+12t$ for distance between any two vertices so far.
Since $|R_i| = |Z'|$ and since all $V_i$ above are reduced into a single vertex (via the odd-minor-relations), by using the above paths $P_1, P_2, P_3$,
we can use $L_i \cup P_i$ to
reduce $\{v_1,v_2,v_3\} \cup Z'$ onto a clique, via the odd-minor-operations, such that
each node of the clique corresponds to a color class (from
the coloring $\sigma$) that is reduced into a single vertex.
Note that, from the coloring $\sigma$,  each $v_i$ may consist of a single color class or
may receive the same color as $V_j$ (for some $j$). In either case,
we can reduce $\{v_1,v_2,v_3\} \cup Z'$ onto a clique, via the odd-minor-operations.

Let $R''$ be the resulting graph of $R \cup Z'$. By the minimality,
we can color $R''$ with $(t-1)$-colors, and by our reduction of  $\{v_1,v_2,v_3\} \cup Z'$ onto a clique,
this coloring is consistent with the coloring $\sigma$.

We can do the same thing for each $R \in \R''$, and hence we obtain a $(t-1)$-coloring of $R$ that
is consistent with the coloring $\sigma$. So we obtain a $(t-1)$-coloring of the whole graph $G$, a contradiction.
\end{proof}

The last lemma in this section is crucial in our proof.
\begin{lemma}
\label{t5}
\showlabel{t5}
Suppose $Z' \subset Z$ is a clique of order at least $t-6$ (and at least $t-5$ if $g=0$) and there are $21$ vertices $S$ in $G_0$ that have neighbors to all the vertices in $Z'$ and that are pairwise far apart (i.e, pairwise distance at least $d'(g) \geq d(30,g)+180$ for some function $d'$ of $g$, to be determined later).
Then either
\begin{enumerate}
\item
$G$ has an odd $K_t$-model, or
\item
$\Sigma$ is sphere, $|Z'| \leq t-5$ and
$G-Z'$ has a near embedding in $\Sigma$ with no large vortices and with $Z-Z'=\emptyset$, or
\item
there are a set of at most twenty disks $D_1,D_2,\dots,D_l$ (with $l \leq 20$) that are bounded by closed curves $C'_1,\dots,C'_l$,
such that
if we delete all the graphs inside these disks, the resulting graph of $G'_0$ in the surface has no odd faces. Moreover, there is no graph $W'$  in $\W$ that attaches to a face (with at least two vertices) outside these disks, such that $W'$ contains an odd cycle $C$, and for some two vertices $u,v$ in $W' \cap G_0$, there are two disjoint paths from $u,v$ to $C$ in $W'$.
In addition, the graph inside each disk is a vortex of depth $40d'(g)+2\alpha^2$.
\end{enumerate}
Note that the third conclusion does not mean that the resulting graph is bipartite (if $g > 0$).
\end{lemma}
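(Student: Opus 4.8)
\medskip
\noindent\emph{Proof plan.}
Assume conclusion~1 fails, i.e.\ $G$ has no odd $K_t$-model; I derive conclusion~2 or~3 (note that if $|Z'|\ge t$ then the clique $Z'$ is itself an odd $K_t$-model, so I may assume $|Z'|\le t-1$). Let $\mathcal{O}$ be the set consisting of all odd faces of $G'_0$ together with all \emph{odd-active} small vortices $W'\in\W$ --- those that contain an odd cycle $C$ and two vertices $u,v\in W'\cap G_0$ joined to $C$ by two disjoint paths inside $W'$, so that a path passing through $W'$ can connect $u$ to $v$ with either parity (compare Lemmas~\ref{even1} and~\ref{odd1}). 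These members of $\mathcal{O}$ are precisely the local obstructions to bipartiteness that conclusion~3 asks us to enclose in disks, and simultaneously they are precisely the parity-flipping gadgets needed when assembling an odd clique model.

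The first step is a greedy disk-packing in the radial-graph metric on $G_0\cup\W$, run as in the proofs of Lemmas~\ref{precenter} and~\ref{center}: repeatedly take a member of $\mathcal{O}$ not yet covered and cover the ball of radius $d'(g)$ about it, with $d'$ chosen large enough that $d'(g)\ge d(30,g)+180$ as required. If this packing stops after at most $20$ disks, disjointify them by the merging trick of Lemma~\ref{center} (the radius grows by a factor at most $20$), apply Lemma~\ref{cut structure1} to see that the graph inside each merged disk is a vortex of depth at most $2\cdot 20\,d'(g)+2\alpha\cdot\alpha=40\,d'(g)+2\alpha^2$, and observe that deleting the disk interiors kills every odd face and every odd-active small vortex --- this is conclusion~3. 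So I may assume the packing needs $21$ disks, producing $O_1,\dots,O_{21}\in\mathcal{O}$ that are pairwise at distance more than $d'(g)$; since the $21$ given vertices of $S$ are themselves pairwise that far apart, one more greedy round also keeps each $O_i$ at distance $\ge d(30,g)+180$ from every vertex of $S$.

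In this remaining case I would assemble an odd $K_t$-model, a contradiction. Use the clique $Z'$ (of order $\ge t-6$, and $\ge t-5$ when $g=0$) as $|Z'|$ single-vertex nodes all receiving colour $1$; then every edge of $Z'$ is monochromatic, and any further node only needs a single colour-$1$ vertex adjacent to all of $Z'$ --- which a vertex $s_i\in S$ provides. Using the high representativity of $G'_0$, the $\delta$-rich linkage of society vertices to the flat wall, and a surface routing in the spirit of Theorem~\ref{gm7}, build the remaining $t-|Z'|$ nodes from the $s_i$, from short pairwise disjoint trees in $G_0$, and --- whenever the intended $2$-colouring would otherwise have the wrong parity along some cycle of the model --- from a detour once around one of the objects $O_i$, whose oddness flips exactly that parity; this lets one $2$-colour the model with node-edges bichromatic and between-node edges monochromatic, i.e.\ makes it odd. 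The ample slack ($21$ far-apart objects and $21$ far-apart vertices) is what lets all these routings be chosen disjoint and topologically consistent on $\Sigma$, and when the Euler genus is positive one gets one extra parity flip for free from the topology of $\Sigma$, so the surface part can be asked for one node fewer there than in the spherical case --- this is the origin of the $t-6$ versus $t-5$ thresholds.

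Finally, when $\Sigma$ is the sphere this routing can genuinely fail, and then I would argue --- in the style of Lemmas~\ref{minimal} and~\ref{re4}, using that $G$ is a minimal counterexample --- that the extra resources which would otherwise finish the construction are absent: an apex vertex outside $Z'$, or a large vortex, could be used to build the last node, so if the construction fails then $Z\setminus Z'=\emptyset$ and $\V=\emptyset$; together with $|Z'|\le t-5$ this is conclusion~2 (and then $G-Z'$ nearly embedded in the sphere with no apices and no large vortices feeds, through the Four Colour Theorem, into the ``$G-X$ planar'' structure used around Theorem~\ref{main1}). I expect the routing/parity step of the previous paragraph to be the main obstacle: reliably controlling the parities of the disjoint routings, handling the small vortices in $\W$ (whose interfaces have length at most $3$), and pinning down precisely how many nodes the surface must contribute and why a positive genus saves exactly one. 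The covering, the disjointification and the depth bound are, by contrast, routine given Lemmas~\ref{lem:omega},~\ref{precenter},~\ref{center},~\ref{cut structure1} and~\ref{delete}.
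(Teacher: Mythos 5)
Your high-level dichotomy matches the paper's: classify the odd obstructions (odd faces and odd-active small vortices), and either they are few (at most $20$ pairwise far apart) and can be swallowed by disks --- giving conclusion~3 via the greedy cover, disjointification and Lemma~\ref{cut structure1}, exactly as in Lemma~\ref{refine} --- or there are $21$ pairwise far apart ones and you should contradict the hypothesis by assembling an odd $K_t$-model. The disk-covering half and the depth bound $40d'(g)+2\alpha^2$ are correct and routine, as you say.

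The routing-and-parity half is where you are missing the actual content, and you identify this yourself. The paper does not build the surface nodes by ad~hoc detours around the $O_i$; it is more structured. It selects a subset $S'\subseteq S$ of size~$6$ (or~$5$ on the sphere) and $15$ (resp.~$10$) of the odd faces, cuts a small cuff $C_i$ around each $s_i$ and a cuff $O'_i$ around each odd face, and then invokes Theorem~\ref{gm7} (the $Y$-forest feasibility result from \cite{RS7}) on the resulting surface with $\ge 3$ cuffs and high representativity to realize a forest consisting of $\binom{6}{2}=15$ paths, each linking one odd-face cuff to exactly two of the $C_i$. The odd face itself is then split --- part of it is absorbed into a node, and the leftover becomes the monochromatic inter-node edge --- which is precisely how the parity is controlled. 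Bundling the five outgoing paths at $C_i$ with a connected piece through $s_i$ gives the node $Q'_i$, and the whole thing is an odd $K_6$- (resp.~$K_5$-)model in $G-Z'$ that glues onto the $Z'$-clique through the common neighbours $s_i$. Your ``trees in $G_0$ plus detours for parity'' intuition is compatible with this but does not establish the existence and disjointness of the required routings, which is exactly what Theorem~\ref{gm7} is for; without naming it the step has no teeth.

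Two more concrete points. First, you have the $t-6$ versus $t-5$ dichotomy backwards: when $g>0$ the surface is asked for \emph{more} nodes (an odd $K_6$-model needs $6$ surface nodes and a clique on only $t-6$ apices), whereas on the sphere it contributes only $5$; so positive genus buys an extra surface node, not ``one node fewer''. Second, your handling of the sphere case is under-specified. The paper first checks whether $G-Z'$ sphere-embeds with no large vortex and no apex in $Z\setminus Z'$ --- if so that is immediately conclusion~2. Otherwise it extracts a ``non-planar cross over $G'_0$'' (either a path $P$ through an apex with $G'_0\cup P$ non-planar, or a vortex giving a crossing pair) and uses it as an auxiliary route, requiring $21$ or $22$ disjoint paths instead of $30$; and when the two cross endpoints are too close it cuts along a short curve between them before applying Theorem~\ref{gm7}. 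Saying ``if the routing fails then the extra resources must be absent'' is the right conclusion but skips the argument that shows the routing can only fail when those resources are absent. So: right skeleton, right lemmas for the covering half, but the core odd-model construction and the sphere sub-case are sketched rather than proved, and the genus accounting is inverted.
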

\begin{proof}
Our proof is divided into two cases:
\begin{enumerate}
\item[case (a)]
there are at least 21 odd faces that are pairwise far apart (i.e, distance at least $d'(g)$).
Here, odd faces may be obtained from a small vortex in $\W$ by adding
an even or an odd path to $G_0$.
\item[case (b)]
(a) does not happen.
\end{enumerate}
In cases (a), we shall obtain an odd $K_t$-model or the second conclusion. In case (b), we shall get the last conclusion of the lemma.

Our proof strategy is as follows: if there are 21 odd faces that are pairwise far apart (e.g., distance at least $d'(g)$), then we shall find 15 odd faces of them, and find a subset set $S'$ of $S$ of order 6 (such that each vertex in $S'$ is far from the 15 odd faces), and connect each pair of $S'$ via each odd face. If $\Sigma$ is not sphere,
this allows us to obtain an odd $K_6$-model (in $G-Z'$)
with each vertex in $S'$ in different nodes of the odd $K_6$-model. This odd $K_6$-model, together with the clique (on $Z'$) of order $t-6$, yields an odd $K_t$-minor. This corresponds to case (a).

If $\Sigma$ is sphere, and
$G-Z'$ has a near embedding in $\Sigma$ with no large vortices and with $Z-Z'=\emptyset$, we obtain the second conclusion. Otherwise,
the same argument applies and we come to case (a).

Otherwise we can adapt the argument in Section \ref{refsur} to obtain at most 20 disks that cover all
odd faces and whose graphs (inside these disks) yield a vortex of depth $40d'(g)+2\alpha^2$. Outside these disks, only even faces remain. Moreover, no small vortex that contains an odd cycle
is attached.
This gives rise to the third conclusion.

\medskip

Let us first consider case (a). Suppose there are  21 odd faces $O_1,\dots,O_{21}$ that are pairwise far apart (e.g., distance at least $d'(g)$). As mentioned above,
odd faces may be obtained by adding an even or an odd path from a small vortex in $\W$.

If $g=0$, i.e., $\Sigma$ is sphere, then we may assume that $G-Z'$ has a near embedding in $\Sigma$ with either at least one large vortex or at least one apex vertex in $Z-Z'$ (otherwise we are done).
In this case, we obtain a ``non-planar'' cross over $G'_0$. This means that either
\begin{itemize}
\item[(i)]
there is a path $P$ with endpoints $u,v$ such that $G'_0 \cup P$ is non planar, or
\item[(ii)]
there is a face $F'$ that accommodates one large vortex in $\V$ such that there are four vertices $a_1,a_2,b_1,b_2$ that appear
in this order listed, and there are two disjoint paths $P'_i$ joining $a_i$ and $b_i$ for $i=1,2$ such that they do not intersect
$G_0$ except for their endpoints. Moreover, $a_1, a_2, b_1, b_2$ are ``free'' in $G'_0$, i.e., there are four disjoint paths
from $a_1, a_2, b_1, b_2$ to any four vertices of $S$ in $G'_0$.
\end{itemize}

In the case $g > 0$, by our condition of 21 odd faces $O_1,\dots,O_{21}$ and $S$,
we can pick up 15 odd faces $O_1,\dots,O_{15}$ and a vertex set $S'$ of order 6 in $S$ such that
each vertex in $S'$ is of distance at least $d'(g)$ from the 15 odd faces. When $g=0$, we only need $|S'|=5$, and 10 odd faces $O_1,\dots,O_{10}$ that satisfy the above distance conditions, but we also need the following: in case (i),
each vertex in $S'$ is of distance at least $d'(g)$ both from $u$ and from $v$,  and each face in $O_1,\dots,O_{10}$ is of distance at least $d'(g)$ both from $u$ and from $v$. In case (ii), the face $F'$ is of distance at least $d'(g)$ both from
$S'$ and from odd faces $O_1,\dots,O_{10}$.

For each vertex $s \in S'$, we want to find paths as described above, but since degree of $s$ may be small,
this may not be possible for a trivial reason. Therefore, we take a closed curve $C$ whose interior
includes $s$ and such that $|C|=5$, and subject to that, the number of vertices in the interior of $C$ is as many as possible.
We now delete interior of $C$. We do the same thing for all the vertices in $S'$. Let $C_1,\dots,C_{6}$ be the resulting
cuffs (such that in the graph $G_0$,
each $C_i$ induces a closed curve whose interior contains $s_i \in S$). We select five vertices $v_{i,j}$ for each cuff $C_i$ for $i=1,\dots,6$ and $j=1,\dots,5$.
For each odd face $O'$
in $O_1,\dots,O_{15}$, we just take two neighbors $x,y$ of $O'$ (such that there are two independent edges between $x,y$ and $O$), and delete $V(O')$ from $G$.
Let $O'_i$ be the resulting cuff (that contains exactly two neighbors $x,y$). Let $G'$ be the resulting graph.

By Lemma \ref{delete}, this results in making distance smaller, but only $4 \times 5 \times 6 + 15 \times 4=180$
for distance between any two vertices in $G'$.
By our assumption, any two cuffs
in $C_1,\dots,C_6, O'_1,\dots,O'_{15}$ are of distance at least $d'(g) -180  \geq d(30,g)$, where $d(.)$ comes from
Theorem \ref{gm7}.

Consider first the case $g > 0$.
So by Theorem \ref{gm7}, there are 30 paths such that i) two vertices in $O'_i$ are connected
to exactly two faces in $C_1,\dots,C_{6}$, and ii) contracting $C_1,\dots,C_{6}$ into single vertices
yields a $K_6$-model. For each $s_i \in S'$,
let us take the connected graph $Q_i$ that connects $s_i$ and five of odd faces $O_1,\dots,O_{15}$,
by taking the obtained five paths with one endpoint in $C_i$, together with a connected subgraph containing $s_i,v_{i,1},\dots,v_{i,5}$.
It is straightforward to see that the latter connected subgraph can be obtained from the interior of the closed curve $C_i$ in the
original graph $G$, together with $v_{i,1},\dots,v_{i,5}$. Therefore, the obtained connected subgraphs $Q_i$ are pairwise disjoint, and any
two share one odd face in $O_1,\dots,O_{15}$. Thus by adding some part of the odd face in $O_1,\dots,O_{15}$ to
$Q_i$, we obtain disjoint connected subgraphs $Q'_1,\dots,Q'_6$ and edges $e_1,\dots,e_{15}$
such that for any two of $Q'_1,\dots,Q'_6$ there is exactly one edge of  $e_1,\dots,e_{15}$ between them, and there is a 2-coloring of $Q'_1 \cup \dots \cup Q'_6$ so that each $Q'_i$ is bichromatic, while
edges $e_1,\dots,e_{15}$ are monochromatic. Thus we obtain an odd $K_6$-model (in $G-Z'$) with each vertex in $S'$ in different nodes of the odd $K_6$-model. This odd $K_6$-model, together with the clique (on $Z'$) of order $t-6$, yields an odd $K_t$-minor. This corresponds to case (a).

Consider next the case $g=0$. Much of the same things happens. Note that in this case $|Z'| \geq t-5$, and
hence we only consider the case $|S'|=5$ (and 10 odd faces $O_1,\dots,O_{10}$. So there are at least three cuffs, which allow
us to define the metric we are using for the distance).
The only difference is that  in case (i) we need to connect
two vertices of $S'$, say $s_1, s_2$, via one odd face and $P$. So in this case, one path has to start from $s_1$ to $u$, and then we need another path from $v$ to one odd face $O$ in $O_1,\dots,O_{10}$. In addition we need one more path from $O$
to $s_2$. So in total, instead of having 30 disjoint paths in the previous case, we need 21 disjoint paths,
using $u$ and $v$. In case (ii), again, we need to connect
two vertices of $S'$, say $s_1, s_2$, via one odd face and the face $F'$. So in this case, one path has to start from $s_1$ to $a_1$, and then we need another path from $b_1$ to one odd face $O$ in $O_1,\dots,O_{10}$. In addition we need two more paths, one from $O$
to $a_2$ and the other from $b_2$ to $s_2$. So in total, instead of having 30 disjoint paths in the previous case, we need 22 disjoint paths.

If Theorem \ref{gm7} is satisfied with respect to $C_1,\dots,C_5, O'_1,\dots,O'_{10}, u, v$ for case (i) and $C_1,\dots,C_5, O'_1,\dots,O'_{10}, F'$ for case (ii), the rest of the argument is exactly the same, and again we obtain an odd $K_5$-model (in $G-Z'$) with each vertex in $S'$ in different nodes of the odd $K_5$-model. As above, this odd $K_5$-model, together with the clique (on $Z'$) of order $t-5$, yields an odd $K_t$-minor. This indeed holds for case (ii).

So it remains to consider case (i) and
the case when $u$ and $v$ are too close. To be more precise, let $l$ be distance between $u$ and $v$, if
$l \geq d(21,0)$, then we can apply Theorem \ref{gm7}, with respect to $C_1,\dots,C_5, O'_1,\dots,O'_{10}, u, v$ (and hence we are done as mentioned before).
Note that each vertex in $S'$ is of distance at least $d'(g)$ both from $u$ and from $v$, and
each face in $O_1,\dots,O_{10}$ is of distance at least $d'(g)$ both from $u$ and from $v$.

If not, we cut along the shortest curve between $u$ and $v$ (which is of length at most $l$)
to obtain the cuff $C'$ containing $u, v$,
By Lemma \ref{delete}, this results in making distance smaller, but only $4 \times l$
for distance between any two vertices in the remaining graph $G''$. So if we take $d'(g) \geq 5d(21,0)$, this implies that the resulting distance in $G''$ between
any two cuffs in $C_1,\dots,C_5, O'_1,\dots,O'_{10},C'$
is at least $d(21,0)$, and hence we can apply Theorem \ref{gm7}, with respect to $C_1,\dots,C_5, O'_1,\dots,O'_{10}, C'$ (and hence we are done as mentioned before).

So we are done if there are at least 21 odd faces that are pairwise far apart (distance at least $d'(g)$).

Suppose there are no 21 odd face that are pairwise distance at least $d'(g)$.
By Lemma \ref{refine}, there are at most twenty disks $D_1, \dots, D_l$ (with $l \leq 20$) that are bounded by closed curves $C'_1,\dots,C'_l$, such that
the graph $\hat G$ outside these disks has only even faces.
Moreover, there is no graph $W'$  in $\W$ that attaches to a face (with at least two vertices) outside these disks, such that $W'$ contains an odd cycle $C$, and for some two vertices $u,v$ in $W' \cap G_0$, there are two disjoint paths from $u,v$ to $C$ in $W'$.
In addition, each graph inside the disk $D_i$ is a vortex of depth $40d'(g)+2\alpha^2$ (because there are at most $\alpha$ large vortices of depth $\alpha$).  This corresponds to the third conclusion (and case (b)).

\drop{
If $\hat G$ is bipartite, we are done as this is the third outcome. So $\hat G$ is not bipartite.

We are now given a surface $\Sigma$ with disks $D_1,\dots, D_t$ and an embedding $\sigma$ in $\hat G$
such that every face is of even size. Two curves $P_1, P_2$ with the same endvertices in $\sigma$ are called \DEF{homoplastic} if there is a homeomorphism $\alpha$ such
that
\begin{enumerate}
\item[(i)]
$\alpha(x) =x$ for all $x \in bd(\Sigma)$, and
\item[(ii)]
the curve $\alpha(P_1)$ is homotopic to $P_2$ in $\Sigma$.
\end{enumerate}

The equivalence classes of this equivalence relation are called \DEF{homoplasty classes}.
We can also define \DEF{homoplastic} and \DEF{homoplasty classes} for closed curves in $\sigma$.

For any two closed curves $P_1, P_2$ with the same endvertices in $\sigma$, if they are homotopic,
then parity of $P_1$ is the same as that of $P_2$, because $P_1 \cup P_2$ bounds a disk and the graph inside
this disk is an induced bipartite graph (since each face is of even size). We claim that
the same conclusion holds even if $P_1$ and $P_2$ are homoplastic. Again, $P_1 \cup P_2$
bounds a chain of disks from one endvertex to the other endvertex, and graphs inside these disks
are induced bipartite, so parity of $P_1$ is the same as that of $P_2$. The same argument
can be applied to the case when two closed curves are either homotopic or homoplastic.

At the moment we may assume that $g > 0$, for otherwise, the second conclusion holds.
If $|S| \geq 16$, since there are at most 11 disks $D_i$, there is a vertex set $S' \subset S$ with five vertices
such that each vertex in $S'$ is of distance at least $d(20,g)$ from each disk $D_i$ and from other vertex in $S'$ as well. For simplicity, let $S=S'$.
As above, for each $s \in S$, we take a closed curve $C$ whose interior
includes $s$ and such that $|C|=5$ and subject to that the number of vertices interior of $C$ is as many as possible.
We now delete interior of $C$. We do the same thing for all the vertices in $S$.
By Lemma \ref{delete}, this results in making distance smaller, but only $10 \times 5 \times 4 =200$
for distance between any two vertices in the remaining graph $\hat G'$. Let $C_1,\dots,C_{5}$ be the resulting
cuffs (such that each $C_i$ contains $s_i \in S$). We select 5 vertices $v_{i,j}$ for each cuff $C_i$ for $i=1,\dots,5$ and $j=1,\dots,k$.

We now consider 10 disjoint paths between $v_{i,j}$ and $v_{j+1,i}$ for $i=1,2,3,4,5$ and $j=1,2,3,4$
with $i \leq j$. Each path must be of odd length. So we appoint a specific homoplastic type.
This is possible because $g \geq 1$ and  each vertex in $S$ is of distance at least $d(20,g)$ from each disk $D_i$ and from other vertex in $S'$ as well. So we can apply Theorem \ref{gm7}.

As before,
these 10 disjoint paths, together with a connected subgraph containing $s_i,v_{i,1},\dots,v_{i,5}$, yield disjoint connected subgraphs $Q'_1,\dots,Q'_5$ and edges $e_1,\dots,e_{10}$
between two of $Q'_1,\dots,Q'_5$ such that for any two of $Q'_1,\dots,Q'_5$ there is an edge in $e_1,\dots,e_{10}$ between them and there is a 2-coloring of $Q'_1 \cup \dots \cup Q'_5$ so that each $Q'_i$ can be bichromatic, while
edges $e_1,\dots,e_{10}$ are monochromatic. Thus we obtain an odd $K_5$-model (in $G-Z'$) with each vertex in $S$ in different nodes of the odd $K_5$-model. This odd $K_5$-model, together with the clique (on $Z'$) of order $t-5$ yields an odd $K_t$-minor. This corresponds to case (a). This completes the proof.}
\end{proof}

\drop{
A similar but simpler proof for Lemma \ref{t5} works for the following lemma too. So we omit the proof.

\begin{lemma}
\label{t4}
\showlabel{t4}
Suppose $Z' \subset Z$ is a clique of order at least $t-4$
and there are $10$ vertices in $G'_0$ that have neighbors to all the vertices in $Z'$ and that are pairwise of distance at least
$d'(g)$ for some function $d'$ of $g$, to be determined later.
Then either
\begin{enumerate}
\item
$G$ has an odd $K_t$-model, or
\item
there are a set of at most five disks $D_1,D_2,\dots,D_l$ (with $l \leq 5$) that are bounded by closed curves $C'_1,\dots,C'_l$, such that
if we delete all the graphs inside these disks, the resulting graph of $G'_0$ in the surface has no odd faces. Moreover, there is no graph $W'$  in $\W$ that attaches to a face (with at least two vertices) outside these disks, such that $W'$ contains an odd cycle $C$, and for some two vertices $u,v$ in $W' \cap G_0$, there are two disjoint paths from $u,v$ to $C$ in $W'$.
In addition, each graph inside each disk is a vortex of depth $10d'(g)+2\alpha^2$.
%
\end{enumerate}
Note that the second conclusion does not mean that the resulting graph is bipartite (if $g > 0$).
\end{lemma}
}

\section{Bounding tree-width in surfaces}

This section is concerning how to bound tree-width of graphs in a surface
of $\alpha$-near embeddable graphs.
The first lemma is the following:
\begin{lemma}
\label{logn1}
\showlabel{logn1}
Let $G$ be a planar graph, and
let $C$ be a face. Then there is a subgraph $W$ of $G$ that contains $V(C)$ such that
\begin{enumerate}
\item
each vertex in $V(G-W)$ has at most three neighbors to $W$, and
\item
tree-width of $W$ is $O(\log n)$.
\end{enumerate}
\end{lemma}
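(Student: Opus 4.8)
The plan is to take for $W$ the unique minimal subgraph that ``insulates'' $C$ — the closure of $V(C)$ under the operation of absorbing any vertex that already has at least four neighbours inside — so that property~(1) becomes automatic, and then to derive $\tw(W)=O(\log n)$ from the layered (outerplanarity) structure of a plane graph.

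\medskip
\noindent\emph{Setup.} We may assume $G$ is connected (components not meeting $C$ may be placed entirely in $G-W$, where they have no neighbour in $W$), and since $G$ is planar and $C$ is a face we re-embed $G$ so that $C$ is the outer face. Put $\Lambda_1:=V(C)$ and, for $i\ge1$, let $\Lambda_{i+1}$ be the vertex set of the outer face of $G-(\Lambda_1\cup\cdots\cup\Lambda_i)$; these are the outerplanarity layers. I will use two standard facts: $G$ has no edge between $\Lambda_i$ and $\Lambda_j$ when $|i-j|\ge2$, and the subgraph induced by any $r$ consecutive layers is $r$-outerplanar and hence has tree-width at most $3r-1$ (this also follows from the Eppstein bound quoted in Section~\ref{coloring extensions}, since these vertices lie within face-distance $O(r)$ of $C$). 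In particular, any subgraph of $G$ meeting only $O(\log n)$ layers has tree-width $O(\log n)$.

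\medskip
\noindent\emph{Construction of $W$ and property (1).} Starting from $V(C)$, repeatedly add to the current set any vertex of $G$ having at least four neighbours in it; let $W$ be the result, viewed as the induced subgraph $G[W]$. If $v\in V(G-W)$ had four or more neighbours in $W$ it would have been added, so every vertex of $G-W$ has at most three neighbours in $W$; this is property~(1). (By induction on the order of addition, $W$ lies inside every subgraph that satisfies~(1) and contains $V(C)$, so its tree-width is a lower bound for that of any competitor — the choice is forced.) It therefore suffices to prove $\tw(W)=O(\log n)$.

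\medskip
\noindent\emph{Bounding the width — the crux.} By the facts above it is enough to show that $W$ meets only the first $O(\log n)$ layers. Put $S_i:=W\cap\Lambda_i$; for $i\ge2$ every vertex of $S_i$ has at least four neighbours in $W$, and by the first fact these lie in $S_{i-1}\cup S_i\cup S_{i+1}$. The argument is a discharging/counting one. The bipartite graph between two consecutive layers is drawn in the annulus between them, so each vertex of the inner layer is joined to a contiguous arc of the outer one with consecutive arcs overlapping in at most one vertex, giving $\sum_{v\in S_{i+1}}\deg^{-}(v)\le|S_i|+|S_{i+1}|$; meanwhile $G[S_i]$, being a subgraph of $\Lambda_i$, is outerplanar, hence $2$-degenerate and with fewer than $2|S_i|$ edges. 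Feeding these into ``every vertex of $S_i$ has degree $\ge4$ inside $S_{i-1}\cup S_i\cup S_{i+1}$'' forces the sizes $|S_i|$ to decrease by a constant factor as $i$ grows, so $\sum_i|S_i|$ is a geometric series and $S_i=\emptyset$ once $i$ exceeds some $O(\log n)$; then $\tw(W)=O(\log n)$ by the $r$-outerplanar bound. The hard part will be the three-layer coupling — a vertex of $S_i$ may put its four neighbours in $S_{i+1}$ rather than in $S_{i-1}$ — together with the within-layer cascade of the closure rule, which can defeat a naive layer-by-layer count; I expect to control this by grouping the layers into consecutive pairs so that each group interacts only with neighbouring groups, and by peeling each slice $S_i$ along its $2$-degeneracy order while charging each peeled vertex to an edge running toward an earlier group, so that the geometric shrinkage survives.
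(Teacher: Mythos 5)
You take essentially the same strategy as the paper: grow $W$ greedily outward from $C$ by absorbing vertices that see ``too many'' neighbours inside, argue that the construction dies out after $O(\log n)$ layers via a geometric-decrease count, and then appeal to the Eppstein bound. The one real difference is the absorption rule: you close under ``$v$ has $\ge 4$ neighbours in the current $W$,'' whereas the paper runs outward in phases $R_1,R_2,\dots$ and adds $v$ only if the addition \emph{strictly decreases} the current boundary $|BD(C')|$. That seemingly small change is exactly what makes the paper's count go through in one line: a vertex added in phase $R_{l'}$ has its $\ge 4$ witnessing neighbours among the boundary vertices of $C_{l'-1}$, which are all at distance $l'-1$, so each addition eats at least one vertex of $V_{l'-1}$ net, and $|V_{l'-1}|\ge 2|V_{l'}|$ follows directly. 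Your rule permits the witnesses to sit in $S_{i+1}$ or among siblings in $S_i$ absorbed earlier in the cascade, which destroys that one-layer charging.

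This is the gap, and you name it yourself: ``The hard part will be the three-layer coupling \dots I expect to control this by grouping the layers into consecutive pairs \dots and by peeling each slice $S_i$ along its $2$-degeneracy order \dots so that the geometric shrinkage survives.'' That is a plan, not a proof; nothing you have written actually forces $|S_i|$ to shrink geometrically. Moreover the two intermediate bounds you do assert are themselves shaky once $S_i$ is a \emph{proper} subset of $\Lambda_i$: the estimate $\sum_{v\in S_{i+1}}\deg^-(v)\le |S_i|+|S_{i+1}|$ relies on consecutive ``arcs'' overlapping in at most one vertex, which presumes the layers are cycles (in general $\Lambda_i$ is a forest and $S_i$ is a further-truncated subset), and the within-layer edge bound is being used to offset degree coming from $S_i$ itself, which again needs the cascade to be tamed first. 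Until you either carry out the grouping-and-peeling argument you promise, or switch to a boundary-reduction rule like the paper's so that the per-phase accounting is immediate, the conclusion $\tw(W)=O(\log n)$ is not established. (Your observation that the closure $W$ is the unique minimal subgraph satisfying (1) and containing $V(C)$ is correct but not used by the lemma.)
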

\begin{proof}
We give a constructive proof. Our first phase $R_1$ is the following.
\begin{quote}
Starting with $C$, we add a vertex $v \in V(G-C)$ that has a neighbor in $C$
if the following is satisfied:

Let $C'$ be the connected subgraph of $G$ constructed so far, and let $BD(C')$ be the boundary vertices of $C'$. 
We add $v$ 
to $C'$ if $|BD(C'')| < |BD(C')|$,
where $C''$ is obtained from $C'$ by adding $v$
and all its incident edges that have another end vertex in $BD(C')$.
\end{quote}
So if $v$ has at least four neighbors to $BD(C')$, then $v$ is added.

All vertices added so far have a neighbor in $C$. Then Phase $R_1$ is done if there is no vertex that has a neighbor
in $C$ and that satisfies the above condition. Let $C_1$ be the resulting subgraph.
Every vertex in $BD(C_1)$ is of distance at most one from $C$.
Moreover, the vertices that are added in phase $R_1$ is of distance exactly one from $C$.

In the next phase $R_2$, we do the above procedure with $C$ replaced by $C_1$.
Let $C_2$ be the resulting subgraph. So every vertex in $BD(C_2)$ is of distance at most two from $C$,
and moreover, the vertices that are added in phase $R_2$ is of distance exactly two from $C$.

We continue to do the phase $R_l$ to obtain the resulting subgraph $C_l$ together with its boundary
$BD(C_l)$ whose vertices are of distance at most $l$ from $C$, and moreover,
the vertices that are added in phase $R_l$ is of distance exactly $l$ from $C$.

We claim that:
\begin{quote}
we stop at phase $R_l$ with $l \leq \log n$.
\end{quote}

To this end, for each $1 \leq l' < l$, we look at the vertices $V_{l'}$ that are added at phase $R_{l'}$.
They are of distance
exactly $l'$ from $C$. This implies that each vertex in $V_{l'}$ has no neighbors in $V_{l''}$ for
$l'' \leq l'-2$.

As observed above, adding one vertex to $V_{l'}$ contributes to reduce $BD(C_{l'})$ by at least one.
Because each vertex in $V_{l'}$ has no neighbors in $V_{l''}$ for
$l'' \leq l'-2$ and hence all its neighbors in $C_{l'-1}$ are in $V_{l'-1}$, this means $|V_{l'-1}|-|V_{l'}| \geq |V_{l'}|$ implying $|V_{l'-1}| \geq 2|V'_{l'}|$.
Since $|C| -|V_1| \geq |V_1|$, this implies that $l \leq \log |C| \leq \log n$. This proves the claim.

\medskip

So $W$ has tree-width at most $6\log n$ by the result of Eppstein \cite{epp1}, and this completes the proof.
\end{proof}

\medskip

Let us consider a graph $H$ in a surface $S$
with Euler genus $g$, and find a subgraph $W$ such that $H-W$ is planar and in addition all the vertices of $H-W$ having neighbors in $W$ are in the outer face boundary.
Such a graph $W$ is called a {\it planarizing} graph.

Recall
that a {\it noncontractible} curve $C$ in $H$ is a curve $C$ hitting
only the vertices of $H$ such that if we delete all the vertices
that hit $C$ (we shall refer to this vertex set as $V(C)$) from $H$,
then Euler genus of the resulting graph of $H$ is less than $g$. Such a
noncontractible curve is called {\it surface separating} if it
divides the surface $S$ into two regions, none of which is sphere.
Otherwise, we call it {\it surface nonseparating}. It is well-known
that there are $2g-2$ different (homology) types of surface nonseparating
noncontractible curves of $S$, and there are $g-1$ different (homology) types of surface separating noncontractible curves of $S$ (see \cite{MT}).

Let $W$ be a planarizing subgraph of $H$ that is embedded in a
surface $S$ of Euler genus $g$. By taking $W$ minimal, we may assume
that $W$ consists of at most $2g-2$ different types (i.e., the same homology class) of minimal
surface nonseparating noncontractible curves of $S$, and at most $g-1$
different types (i.e., the same homology class) of minimal surface separating noncontractible curves
in $S$, and at most $3g-3$ curves to make $W$ ``connect'' (thus there are at most $3g-3$ noncontractible curves, together with $3g-3$ curves to connect them in $W$. Note that these curves are not necessarily disjoint).
Moreover, we may assume that each of these at most $6g-6$ curves (i.e., each curve (except for the endpoints) does not hit any other noncontractible curves nor a curve joining two noncontractible curves) passes through
each face of $H$ at most once (otherwise, we can ''shorten" the
curve, see Lemma 10 in \cite{DKMW} for more details).


Let us come back to our graph $G$ that has an $\alpha$-near embedding. Let $W$ be the planariznig subgraph of $G_0$.
In our application, we have large vortices $\V$. These vortices are attached to faces $C_1,\dots,C_{\alpha'}$.
We need to connect each of these faces to $W$. This can be done as follows;
We first find a shortest curve $P_1$
between $C_1$ and $W$, and add $P_1$ to $W$ (let $W_1$ be the resulting graph), and then  find a
shortest curve $P_2$ between $W_1$ and $C_2$, and add $P_2$ to $W_1$ (let $W_2$ be the resulting graph),
and so on. Let $W'=W_{\alpha'}$ be the resulting graph after $\alpha'$ iterations. We take $W'$ so that the number of vertices in $W'$ is as small as possible.

This minimality implies the following property: for each curve $P$ (with endpoints $u,v$) in $W'$, which is either in the above at most $6g-6$ curves or joining one face $C_i$ and $W'$,
\begin{quote}[(*)]
it is the shortest (i.e, $P$ is the shortest curve between $u$ and $v$).
\end{quote}

 Let $G_l$ be the subgraph of $G'_0$ obtained from $W'$ by adding all the vertices that are of
 face-distance in $G'_0$ at most $\log n$ from $W'$ (i.e., for each vertex $u$,
 there is a vertex $v$ in $W'$ such that distance between $u$ and $v$ is at most $\log n$ in $G'_0$).

We prove the following lemma, which is crucial in our proof of our main result.

\begin{lemma}
\label{cru}
\showlabel{cru}
Tree-width of $G_l$ is at most $6400g^{5/2}(6g+2\alpha') \log n$.
\end{lemma}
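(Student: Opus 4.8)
The plan is to reduce to the planar case by cutting the surface open along $W'$, apply the $O(\log n)$ treewidth bound for planar graphs in which no vertex is far from a chosen face (Eppstein \cite{epp1}, as already used in Lemma~\ref{logn1}), and then pay a bounded (in $g$ and $\alpha'$) price for re-identifying the two banks of each cut. First I would record the combinatorial structure of the planarizing subgraph: as discussed before the statement, $W'$ is a subdivision of a graph with only $O(g+\alpha')$ branch vertices, being the union of at most $6g-6$ noncontractible/connecting curves together with the $\le\alpha'$ curves joining the large-vortex faces $C_1,\dots,C_{\alpha'}$ to the rest; in particular $W'$ is the union of at most $k:=6g+2\alpha'$ curves, and by property~(*) each of them is a \emph{shortest} curve between its endpoints. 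Note that the large vortices have depth at most $\alpha$, which is a function of $g$ only, so their contribution will always be absorbable into a genus-polynomial factor.

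Next I would cut $\Sigma$ along $W'$. Since $W'$ is planarizing, the result is a planar graph $\bar G$, in which $W'$ is spread along the boundary walk of one distinguished face $F_0$; this boundary walk traverses only $O(k)$ shortest arcs (each a subpath of some of the $k$ curves). Let $\bar G_l$ be the copy of $G_l$ inside $\bar G$. Every vertex $v$ of $G_l$ satisfies $\mathrm{dist}_{G'_0}(v,W')\le\log n$ by definition, and along a shortest $v$--$W'$ path every intermediate vertex $u$ has $\mathrm{dist}_{G'_0}(u,W')\le\log n$ as well; reading this path in $\bar G$, it stays inside $\bar G_l$ and reaches $\partial F_0$ in at most $\log n$ steps (the first time it meets $W'$). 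Hence $\bar G_l$ is $(\log n+1)$-outerplanar with respect to $F_0$, so $\tw(\bar G_l)=O(\log n)$ by Eppstein's bound \cite{epp1}; this is exactly the planar ingredient behind Lemma~\ref{logn1}.

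The last step is to recover $G_l$ from $\bar G_l$ by zipping, for each of the $k$ curves, the two copies of (an arc of) that curve that lie on $\partial F_0$. Each such zip is an identification along a shortest path, and crucially the two banks being glued are joined inside $\bar G_l$ by a ``ladder'' of width $O(\log n)$ (the image of the $\le\log n$-neighborhood of that curve in $G'_0$); this lets one realign the two corresponding sub-decompositions at an extra additive cost that is polynomial in $g$ times $\log n$. Doing this for all $k=6g+2\alpha'$ curves — organized along the branch structure of $W'$, and with the large vortices of depth $\le\alpha$ inserted into the bags meeting the faces $C_i$, using Lemma~\ref{delete} to control how the distance metric degrades under the deletions — gives $\tw(G_l)\le (\text{poly}(g))\cdot(6g+2\alpha')\log n$, and tracking the constants (the $g^{5/2}$ coming from the number of homology classes times the cost of routing a shortest curve through up to $g$ handles) yields the stated bound $6400\,g^{5/2}(6g+2\alpha')\log n$.

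\textbf{Main obstacle.} The genuine difficulty is the final zipping/re-gluing step: the $k$ curves making up $W'$ are not pairwise disjoint, a strip around one of them may be crossed by many of the others, and the large vortices sit inside some of the faces being involved. Quantifying the per-zip increase in treewidth, bounding how these increases accumulate over the $O(g+\alpha')$ curves, and verifying that cutting along this non-disjoint family of shortest curves distorts face-distances (hence outerplanarity) only by a genus-polynomial factor — for which property~(*) and Lemma~\ref{delete} are exactly the tools — is where essentially all of the work, and all of the constants, lie.
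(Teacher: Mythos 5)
Your plan coincides with the paper's for the first (easy) half: cut $\Sigma$ open along $W'$, observe that in the resulting planar graph $G_P$ every vertex is within face-distance $\log n$ of the outer boundary $C$, and invoke Eppstein to get $\tw(G_P)\le 6\log n$. But for the hard half — controlling the treewidth of $G_l$ after re-identifying the duplicated boundary vertices — the paper does \emph{not} attempt the per-curve ``zipping'' accounting you sketch. It instead invokes Thomassen's surface version of the grid--treewidth duality (Theorem~\ref{grid1}: if an embedded graph in Euler genus $g$ has no flat $l$-wall then its treewidth is below $400lg^{3/2}$), and then argues by contradiction that $G_l$ cannot contain a flat wall of height $16g(6g+2\alpha')\log n$. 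The contradiction is extracted from property~(*) (each arc of $W'$ is a shortest curve between its endpoints) roughly as follows: after discarding the $\le 6g+2\alpha'$ branch/crossing vertices $W''$ and a $\log n$-neighborhood of them, a large subwall $R'$ of the hypothetical wall would have to contain a vertex far from both $C$ and from $W'\setminus P'$ for some particular arc $P'$ of $W'$; but by the definition of $G_l$ every vertex is within $\log n$ of $W'$, so such a vertex would have to be close to $P'$ itself, and following that closeness along $P'$ produces a shortcut of $P$ contradicting~(*). No explicit treewidth-under-gluing estimate is ever made.

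The step you flag as the ``main obstacle'' is exactly where your proposal is not a proof. The assertion that identifying two boundary arcs ``at an extra additive cost that is polynomial in $g$ times $\log n$'' is not substantiated; in general, gluing two boundary arcs of a planar graph of treewidth $w$ can increase treewidth well beyond an additive $O(w)$, and the reason it does not do so \emph{here} is precisely the shortest-curve property of $W'$ — which your sketch invokes only by name. The paper's route sidesteps this quantitative gluing question entirely by working in the glued graph directly and bounding its flat-wall size, which is why it can cite a clean $400lg^{3/2}$ factor (from Theorem~\ref{grid1}) rather than needing to track how treewidth accumulates across $6g+2\alpha'$ interacting cuts. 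If you wanted to rescue the zipping approach you would essentially have to reprove a form of the flat-wall argument, so the paper's detour through Theorem~\ref{grid1} is the more economical route.
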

\begin{proof}
By our construction of $W'$, if we cut along the boundary of $W'$, we would obtain the planar graph $G_P$ such that
the vertices on the outer face boundary are obtained from $W'$ by possibly duplicating some vertices (from cutting $W'$). Let $C$ be the resulting outer face boundary of the planar $G_P$.

By the construction of $G_l$, every vertex in $G_P$ is of face-distance at most $\log n$ from a vertex in $C'$.
So $G_P$ has tree-width at most $6\log n$ by the result of Eppstein \cite{epp1}.

We claim that even we paste all duplicating vertices of $C$ to obtain $W'$, which is embedded in the surface $\Sigma$,
tree-width of the resulting graph (that is, $G_l$) is at most $6400g^{5/2}(6g+2\alpha') \log n$, which would prove the lemma.

We need the following result of Thomassen \cite{carsten} (see Proposition 7.3.1 in \cite{MT}):

\begin{theorem}
\label{grid1} \showlabel{grid1}
Suppose $G$ is embedded in a surface of Euler genus $g$. For
any $l$, if
$G$ is of tree-width at least $400lg^{3/2}$, then it contains a flat $l$-wall.
If there is no flat
$l$-wall in $G$, then tree-width of $G$ is less than
$400lg^{3/2}$.
\end{theorem}

So by Theorem \ref{grid1}, it remains to show that there is no flat wall of height $16g(6g+2\alpha') \log n$ in $G_l$.
Suppose for a contradiction that such a flat wall $R$ exists in $G_l$.

Let us first remind that $W'$ consists of (i) at most $3g-3$ noncontractible curves, together with curves to ``connect'' them, and (ii) at most $\alpha'$
curves connecting faces $C_1,\dots,C_{\alpha'}$ (that vortices are attached to) to $W'$.
The curve in (ii) joins two vertices of $G$ while two closed curves in (i) may intersect, but
by the minimality of such closed curves, intersection of any two such curves must be at most one curve (i.e., consecutive. For if
there are two closed curves $C_1,C_2$ whose intersections consist of at least two curves, then there are a curve $P'$ in $C_1$ and $P''$ in $C_2$ such that
$P' \cup P'$ bounds a disk.
So we can
delete $P'$ to obtain two curves $C'_1, C'_2$ that are homotopic to $C_1, C_2$ respectively, but
intersections of $C'_1, C'_2$ consists only one curve. Indeed, this contradicts minimality of $W'$).

Therefore, there are at most $6g+2\alpha'$ vertices $W''$ that are contained either
in at least two curves in (i) and (ii) or in $C_1,\dots,C_{\alpha'}$.

So the flat wall $R$ contains a subwall $R'$ of height $8g  \times \log n$ such that no vertex in $W''$ is in $R'$ and in addition, distance in $R$ between any vertex in the outer face boundary $C$ of $R'$ and any vertex in $W''$ is at least $\log n$.

Let us use the fact (*). Let us take the curve $P$ as defined right before (*).
We are only interested in
the case when $|P'| \geq 10 \log n$,
where $P'$ is a curve that is obtained from $P$ by deleting  all vertices of distance at most $\log n$ from the
two endvertices $u,v$ of $P$.

Suppose there is a vertex $v'$ that is of distance exactly $\log n +1$ from  $P'$ in $R'$ and of distance at least $\log n+2$ from $C$ in $R'$.
Let $\hat W'$ be obtained from $W'$ by deleting all the vertices in $P'$.
By our choice of $R'$, $v'$ cannot be of distance within $\log n$ from
$\hat W'$ (for otherwise, $v'$ must be of distance at most $\log n$ from some curve $P''$ that is in $\hat W'$. This gives a smaller construction for $W'$, a contradiction).  Note that both $u$ and $v$ are of distance at least $\log n+2$ from $v'$.

So this implies that $v'$ must be of distance at most $\log n$ from $P'$ in $R'$, a contradiction.
Hence no such a vertex $v'$ exists.

Let us assume that $P'$ contains a vertex $v''$ that is of distance at least $4 \log n$ from $C$ in $R'$.
Since our distance is metric,
the argument in the previous paragraph implies that every vertex in $R'$ is either of distance at most $\log n$ from $P'$ or of distance $\log n$ from $C$.
We are only interested in the former case.
$v''$ divides $P'$ into two curves $P'_1, P'_2$. Let $P''_i$ be all part(s) of $P'_i$ that are of distance at most $2\log n$ from $C$ in $R'$ (note that $P''_i$ could consist of just a single curve).
Let us consider all the vertices $V_1$ ($V_2$, resp.) that are of distance at most $\log n$ from $P''_1$ ($P''_2$, resp.) in $R'$. If there is a vertex that is both in $V_1$ and $V_2$, we can make a ''short cut'' $P''$
between $P''_1$ and $P''_2$. Note that the length of curve $P''$ is at most $2\log n -2$. Since $v''$ is of distance at least $4 \log n$ from $C$ in $R'$, $P' \cup P''$ contains a curve between two endvertices of $P'$ but shorter than $P'$, a contradiction.

Hence no vertex is contained in both $V_1$ and $V_2$.
Since, again, our distance is metric, so there must exist a circumference $C_1$ ($C_2$, resp.)
for $V_1$ ($V_2$, resp.) in $R'$, i.e., distance exactly $\log n$ from $P'_i$ for $i=1,2$ in $R'$. Since every vertex in $R'$ is of distance at most $\log n$ from $P'$,
this implies that $C_{i}$ is contained in $V_{3-i}$ for $i=1,2$.
In particular, some vertex of distance at most $2\log n$ from $C$ which is contained in $C_2$ is also contained in $V_1$.
But again,  as above, we can make a ''short cut'' $P''$
between $P''_1$ and $P''_2$ by taking a shorter curve between two endvertices of $P'$ in $P' \cup P''$. Note that the length of curve $P''$ is at most $2\log n$. This completes the proof.
\end{proof}

\section{Finale}
\label{fina}
\showlabel{fina}

In this section, we shall finish the proof of Theorem \ref{main1}. In order to do so, we must have a closer look at
the second and
the third conclusions of Lemma \ref{t5}. For these two lemmas, $G$ is a minimal counterexample to the odd Hadwiger's conjecture
for the case $t \geq 6$.

The following lemma is concerning the second conclusion of Lemma \ref{t5}.
\begin{lemma}
\label{t51}
\showlabel{t51}
Assume that there is a vertex set $Z'$ of order at most $t-5$ in $G$ such that $G-Z'$ has a near embedding in sphere with no large vortices, and $Z'$ induces
a clique.
Suppose that
\begin{enumerate}
\item
there are ten vertices $S$ in $G_0$ that have neighbors to all the vertices in $Z'$ and that are pairwise of distance
$d(24,0)+48$, where $d(,)$ comes from Theorem \ref{gm7}, and
\item
there are seven faces $F_1, \dots, F_7$ in $G_0$ that are pairwise of distance
$d(24,0)+48$, and that satisfy the following property:
for each $i$, either
\begin{itemize}
\item
$|F_i|$ is odd, or
\item
there is a graph $W'$  in $\W$ that attaches to $F_i$ with at least two vertices, such that $W'$ contains an odd cycle $C$,
and for some two vertices $u,v$ in $W' \cap G_0$, there are two disjoint paths from $u,v$ to $C$ in $W'$.
\end{itemize}
\end{enumerate}
Then $\W=\emptyset$.
\end{lemma}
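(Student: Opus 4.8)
\medskip
\noindent\textbf{Proof plan.} Suppose for contradiction that $\W\neq\emptyset$ and fix a small vortex $W_0=(G_j,\Omega_j)\in\W$; write $\Omega_0=V(G_j\cap G_0)$, so $|\Omega_0|\le 3$ and $\Omega_0$ lies on a face $F_0$ of $G_0$, and let $I_0=G_j-\Omega_0$ be its interior, which we may assume is nonempty. Since $G$ is a minimal counterexample, $G$ has neither an odd $K_t$-model nor a reduction, and the whole plan is to contradict this. The central object is the separation $(A,B)$ with $B=I_0\cup\Omega_0\cup Z'$ and $A=G-I_0$: its order is $|Z'\cup\Omega_0|\le|Z'|+3\le t-2$, while $B-A=I_0\neq\emptyset$ and $A-B\neq\emptyset$. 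Using Facts~\ref{easy1}--\ref{easy3} I would first split $I_0$ (together with $\Omega_0$) into an induced bipartite part and finitely many blocks each carrying an odd cycle, recovering over all of $\W$ the families $\Q$ (bipartite blocks) and $\R$ (odd-cycle subgraphs) of Section~\ref{smallv}.

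If $I_0$ is bipartite and $|Z'\cup\Omega_0|\le t-3$ (in particular whenever $|Z'|\le t-6$), then $(B,A)$ is a separation of order at most $t-3$ with bipartite side $I_0$, contradicting Lemma~\ref{conn1}. If $I_0$ is bipartite and $|Z'\cup\Omega_0|=t-2$ (so $|Z'|=t-5$ and $|\Omega_0|=3$), then Lemma~\ref{re3} applied to $(B,A)$ shows that every $(t-1)$-coloring of $A$ extends over $I_0$, so deleting $I_0$ is a reduction --- unless some $v\in I_0$ is adjacent to all of $Z'\cup\Omega_0$, and hence to every vertex of $Z'$. If at least $|Z'|^2 2^{|Z'|}$ small vortices were exceptional in this last sense, a pigeonhole on $N(\cdot)\cap Z'$ followed by the clique-growing argument of Lemma~\ref{even1} (using even paths inside bipartite blocks, cf.\ Lemma~\ref{re2}) would reduce $Z'$ onto a clique of order at least $t-3$; but those reductions keep the clique inside $Z'$, and $|Z'|\le t-5$, a contradiction. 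Hence only boundedly many vortices are bipartite, and by Lemma~\ref{re4} the same holds for the non-bipartite ones, so $\W$ is finite and it suffices to contradict the presence of a single surviving vortex.

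So suppose $W_0$ survives: $I_0$ carries an odd cycle $C$ reachable by two disjoint paths of $G_j$ from two vertices $u,v\in\Omega_0$, whence $G_j\cup\{u,v\}$ contains an odd path from $u$ to $v$ and the face $F_0$ behaves exactly like the seven given faces $F_1,\dots,F_7$. Working inside $G_0$ with the face-metric extended to $G_0\cup\W$ as in Section~\ref{refsur}, I would designate three far-apart faces as cuffs (so that representativity and Theorem~\ref{gm7} are available on the sphere), choose five of the ten vertices of $S$, say $s_1,\dots,s_5$, pairwise far apart and far from $F_0,F_1,\dots,F_7$, and invoke Theorem~\ref{gm7} to route five disjoint connected subgraphs $Q_1,\dots,Q_5$ with $s_i\in Q_i$, pairwise joined by single edges, whose homoplasty types through a suitable subset of the odd faces $F_0,\dots,F_7$ are chosen so that contracting the joining edges leaves a bipartite graph --- this is precisely where the odd faces serve as parity-flipping gadgets and where the far-apartness lets Theorem~\ref{gm7} produce the realizing forest. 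The result is an odd $K_5$-model in $G-Z'$ whose five nodes $Q_i$ each contain $s_i$, adjacent to all of $Z'$; together with the clique $Z'$ of order $t-5$ (which receives the common color) this is an odd $K_t$-model in $G$, a contradiction. When $|Z'|<t-5$ one argues identically with $t-|Z'|$ nodes (there are ten vertices of $S$), and when $|Z'|\le t-6$ the bipartite case was already excluded by Lemma~\ref{conn1}.

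The hard part is the routing just described: one must 2-color $Q_1\cup\dots\cup Q_5$ so that each $Q_i$ is bichromatic and every joining edge, as well as every edge from some $Q_i$ to $Z'$, is monochromatic --- equivalently every cycle in the union of the nodes is even --- and this requires meeting all $\binom{5}{2}$ parity constraints \emph{simultaneously} by choosing the side of each odd face along which the relevant subpaths run. Making this bookkeeping fit the number of pairwise far-apart odd faces supplied, and checking in the exceptional bipartite sub-case that the at most $t-2$ colors already present on $Z'\cup\Omega_0$ never obstruct the coloring extension, are exactly where the constants of the hypothesis ($10$ vertices in $S$, $7$ faces, pairwise distance at least $d(24,0)+48$) get spent; a secondary subtlety is confirming that the linear-decomposition depth of the surviving small vortices (Lemma~\ref{delete}) does not interfere with the routing.
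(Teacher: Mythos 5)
Your proposal diverges from the paper in the non-bipartite case, and the divergence contains a genuine gap. You propose to build an odd $K_5$-model in $G-Z'$ (with nodes meeting the vertices of $S$) and then glue on the clique $Z'$ to obtain an odd $K_t$-model. But the hypothesis of Lemma~\ref{t51} is precisely the second outcome of Lemma~\ref{t5}: $G-Z'$ has a near embedding in the \emph{sphere} with no large vortices and $Z-Z'=\emptyset$. In that situation $G-Z'$ is planar up to $3$-separations and has no $K_5$-minor, so it certainly has no odd $K_5$-model; this topological obstruction is exactly why the sphere case of Lemma~\ref{t5} requires the ``non-planar crossover'' (cases (i)/(ii) there), and why the second conclusion of that lemma was carved out in the first place. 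The odd faces $F_0,\dots,F_7$ give parity-flipping freedom for paths, but they cannot create the crossings needed for a $K_5$-minor, so the routing step you flag as ``the hard part'' is not merely hard --- it cannot succeed here.

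The paper sidesteps this by \emph{not} attempting to build a large odd clique. Its proof fixes a non-bipartite $W\in\W$, $(t-1)$-colors $G-(W-(W\cap G_0))$ by minimality, and then only needs to make the (at most three) color classes $V_1,V_2,V_3$ of $Z'\cup(W\cap G_0)$ that contain $W\cap G_0$ into a clique via odd-minor-operations. That identification is achieved with twelve disjoint paths routed through three vertices of $S$ and six of the odd faces $F_1,\dots,F_6$ by Theorem~\ref{gm7}; the odd faces provide the parity needed to both identify each $V_i$ to a single vertex and place edges between the resulting vertices. The reduced graph has a $(t-1)$-coloring by minimality, which is consistent with the original coloring of $G-(W-(W\cap G_0))$ and hence extends over $W$, a contradiction. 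This only requires a $K_3$-scale structure in the planar part, which is topologically unproblematic. Your treatment of the bipartite case (via Lemma~\ref{conn1} when $|Z'\cup\Omega_0|\le t-3$, and Lemma~\ref{re3} when $|Z'\cup\Omega_0|=t-2$) is actually more careful than the paper's one-line appeal to Lemma~\ref{conn1}, but it is the non-bipartite case that carries the proof, and there the odd $K_5$-model route does not work; a reduction-style argument as in the paper is needed.
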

\begin{proof}
By Lemma \ref{conn1}, there is no graph in $\W$ that is bipartite (because $|Z'| \leq t-5$). Suppose that there is a face $C$ that accommodates a non-bipartite graph  $W \in W$ in $G'_0$.
By minimality, we can $(t-1)$-color $G-(W-(W \cap G_0))$. This yields a coloring of $Z'$ and $W \cap G_0$.
So we can partition $Z' \cup (W \cap G_0)$ into color classes $V_1,\dots,V_l$, where $l \geq |Z'|$ because $Z'$ is a clique.
Let us assume that $W \cap G_0$ are contained in $V_1 \cup V_2 \cup V_3$. We shall, in $G-(W-(W \cap G_0))$,
identify each of $V_1, V_2, V_3$ into a single point, via the odd-minor-operations,
such that there is an edge between any two $V_i,V_j$, as in the proof of Lemma \ref{odd1} (note that $Z'$ is a clique of order at most $t-5$). This allows us to $(t-1)$-color $W$ whose coloring is consistent
with the coloring of $G-(W-(W \cap G_0))$ (and hence we obtain a $(t-1)$-coloring of the whole graph, a contradiction).
It remains to show that such a reduction exists.

By our assumptions, we may assume that
\begin{itemize}
\item
there are three vertices $S'=\{v_1,v_2,v_3\}$ in $S$ that are of distance at least $d(24,0)+48$ from $W$,
\item
there are six faces (out of the seven faces $F_1, \dots, F_7$), say, $F_1, \dots, F_6$, that are of distance at least $d(24,0)+48$ from $W$, and
\item
distance between any vertex in $S'$ and any face in $F_1, \dots, F_6$ is at least $d(24,0)+48$.
\end{itemize}

As in the proof of Lemma \ref{t5}, for each face $O$ of $F_1, \dots, F_6$, we take three neighbors $u,v,w$ of $O$ (such that there are three independent edges between $u,v,w$ and $O$) and delete $F_1, \dots, F_6$ from $G'_0$.
Let $F'_i$ be the resulting cuff (containing exactly three vertices $u,v,w$).
Again, as in the proof of Lemma \ref{t5}, by Lemma \ref{delete},
this results in making distance smaller, but only $4 \times 2 \times 6=48$ for distance
between any two vertices in the remaining graph $G'$ of $G'_0$.


We now consider the following twelve disjoint paths $P_1,\dots,P_6,P'_1,\dots,P'_6$ in $G'$: connect $v_i$ and $F'_i$ for $i=1,2,3$ to obtain paths $P_i$, connect
$F'_i$ and $F'_{i+3}$ for $i=1,2,3$ to obtain paths $P'_i$, connect
$(F'_4, F'_5)$, $(F'_5, F'_6)$, $(F'_6, F'_4)$ to obtain paths $P'_4, P'_5, P'_6$, and connect $F'_i$ and $C \cap W$ to obtain paths $P_{i+3}$ for $i=1,2,3$.
By Theorem \ref{gm7}, such twelve disjoint paths must exist.

Now it is straightforward to see that $P_i \cup P_{i+3} \cup F_i$, together with $P'_1, \dots, P'_6$,
leads to identify each of $V_1, V_2, V_3$ into a single point,
via the odd-minor-relations,
such that there is an edge between any two $V_i,V_j$ for $i,j \leq 3$ (by the paths $P'_1, \dots, P'_6$). Moreover, each of $V_1, V_2, V_3$ has an edge to $V_j$ for $j \geq 4$ because $P_i \cup P_{i+3} \cup F_i$ contains one vertex in $S$. This completes the proof.
\end{proof}

The following lemma is concerning the third conclusion of Lemma \ref{t5}.
\begin{lemma}
\label{t52}
\showlabel{t52}
Assume that there is a vertex set $Z'$ of order at least $t-3$ such that $Z'$ induces a clique, and
with $Z' \subset Z$, $G-Z$ has a nearly embedding in a surface $\Sigma$ of Euler genus $g$.

Suppose furthermore that
there is a set of $l \geq 1$ disks $D_1,D_2,\dots,D_l$ that are bounded by closed curves $C'_1,\dots,C'_l$, such that
if we delete all the graphs inside these disks, then
\begin{itemize}
\item
the resulting graph of $G'_0$ in the surface has no odd faces and has representativity at least $d(6,g)$, if $\Sigma$ is not sphere, and
\item
there is no graph $W'$ in $\W$ that attaches to a face in the resulting graph of $G_0$, such that $W'$ contains an odd cycle $C$, and for any some  vertices $u,v$ in $W' \cap G_0$, there are two disjoint paths from $u,v$ to $C$ in $W'$.
\end{itemize}
Then there are no five vertices in $G_0$ nor five components in $\Q$ that are pairwise of distance at least $d(6,g)$, that are of distance at least $d(6,g)$ from any of the  disks $D_1,D_2,\dots,D_l$, and
that have neighbors to all the vertices in $Z'$. Note that $d(,)$ comes from Theorem \ref{gm7}.
\end{lemma}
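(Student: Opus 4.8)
The plan is to argue by contradiction, producing an odd $K_t$-model and so contradicting the fact that a minimal counterexample contains no odd $K_t$-minor. Suppose there were five such ``special blocks'' $B_1,\dots,B_5$, each adjacent to every vertex of $Z'$, pairwise at distance at least $d(6,g)$, and at distance at least $d(6,g)$ from every disk $D_i$; here each $B_j$ is either a single vertex of $G_0$ or a bipartite block of a small vortex in $\Q$, and in either case a path can be attached to $B_j$ from outside without changing its parity, so the two cases may be handled uniformly. The case $|Z'|=t-1$ is treated separately (as announced immediately after this lemma), and when $|Z'|=t-2$ it suffices to join two of the $B_j$ by a single odd path --- an odd $K_2$-model which, combined with the clique $Z'$, already gives an odd $K_t$-model. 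So the decisive case is $|Z'|=t-3$: here I would construct an odd $K_3$-model in $G-Z'$ with three branch blocks among the $B_j$ whose three connecting paths $P_{12},P_{23},P_{31}$ are \emph{each} of odd length, and then combine it with the clique $Z'$ of order $t-3$ to get an odd $K_{(t-3)+3}=K_t$-model.

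The three odd paths are built using the location of the ``oddness''. By the hypothesis of the lemma, after removing the interiors of $D_1,\dots,D_l$ the surface part $G'_0$ has only even faces and carries no small vortex with an odd cycle reachable by two disjoint paths from two of its attachment vertices; consequently every contractible closed walk avoiding the disks has even length (the interior of a contractible curve is an induced bipartite subgraph), so the only sources of odd parity are (i) winding once around an odd non-contractible cycle of the even-faced surface part, available when $\Sigma\neq S^2$, or (ii) detouring into one of the disks --- which by the third conclusion of Lemma~\ref{t5} is a vortex of depth $40d'(g)+2\alpha^2$ --- and around an odd cycle it contains. I would fix one such parity witness, keep three of the five $B_j$ that are suitably far from it and from any other auxiliary structure used in the routing (the slack between five and three is exactly for this), blow up each of these three that is a vertex into a short boundary cuff (losing only a bounded additive amount of distance, by Lemma~\ref{delete}), and then invoke Theorem~\ref{gm7} with a forest joining the three cuffs to the parity witness in the prescribed homoplasty class. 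The distance and representativity hypotheses $d(6,g)$ are precisely what makes Theorem~\ref{gm7} applicable; realizing the forest yields three internally disjoint paths of the required parities, and pulling the cuff interiors back to $G$ (they contain the original $B_j$ and hence still see all of $Z'$) produces the desired odd $K_3$-model.

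The final step is the parity arithmetic of the combination. Since $Z'$ is a clique of order $t-3$, in any valid $2$-colouring of the combined structure all of $Z'$ lies in one colour class; each branch block $B_j$, being adjacent to all of $Z'$, is forced into that same class; and the demand that $P_{12},P_{23},P_{31}$ be odd is exactly the condition under which every triangle through the three branch blocks and every edge $zB_j$ ($z\in Z'$) can be made monochromatic while each node stays bipartite. Hence the three nodes together with the $t-3$ singleton nodes of $Z'$ and the clique edges of $Z'$ form an odd $K_t$-model, the desired contradiction. I expect the main obstacle to be the second step: routing three internally disjoint paths that are \emph{simultaneously} all of odd length when the entire supply of oddness may be a single low-depth vortex-disk or a single homology class of non-contractible curves. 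This is where the depth bound on the disks, the representativity bound $d(6,g)$, and the freedom to pick three of the five $B_j$ must be balanced against one another, and where one must verify that forcing all three paths (not merely the closed triangular walk) to be odd remains achievable --- if some disk can accommodate too few disjoint ``odd detours'', one must instead split one odd cycle, or route distinct detours through distinct disks or around distinct non-contractible curves.
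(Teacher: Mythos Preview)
Your overall strategy matches the paper's: argue by contradiction, focus on the critical case $|Z'|=t-3$, build an odd $K_3$-model in $G-Z'$ on three of the five special objects, and combine it with the clique $Z'$ to obtain an odd $K_t$-model. You also correctly isolate where the difficulty lies.

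The construction of the odd $K_3$-model, however, differs. You aim for a \emph{triangle}: three pairwise paths $P_{12},P_{23},P_{31}$, each forced to be odd by its own detour through a parity witness, and you rightly worry that a single odd face or a single homology class may not support three independent odd detours. The paper instead uses a \emph{hub-and-spoke} construction: it fixes one odd face $F$ inside one disk $D_1$, takes three disjoint paths inside $D_1$ from $F$ to boundary vertices $v_1,v_2,v_3$, and then routes three disjoint paths $P_i$ from $s_i$ to $v_i$ in the even-faced part $\hat G$. The odd $K_3$ is then assembled by distributing pieces of the single odd face $F$ among the three nodes. This sidesteps entirely the obstacle you flag: only one source of oddness is ever needed.

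The paper also makes a case split you omit. If $\hat G$ is bipartite, pigeonhole puts three of the five $s_i$ in the same colour class (this is the actual reason five, not three, are required), and the three paths to $F$ then automatically have matching parities. If $\hat G$ is not bipartite, the paper observes that parity of a path in $\hat G$ depends only on its homoplasty class (since all faces are even), so by choosing homoplasty classes one can force the three paths $P_i$ from $s_i$ to $v_i$ to have the same parity; Theorem~\ref{gm7} then realizes them. In both cases the parity balancing is pushed onto the single odd face $F$, not onto three separate detours.
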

\begin{proof}
Suppose there are five vertices $\{s_1,s_2,s_3,s_4,s_5\}=S$ in $G_0$ that are of distance at least $d(6,g)$ from any of the  disks $D_1,D_2,\dots,D_l$, and
that have neighbors to all the vertices in $Z'$. We only consider the case when $|Z'|=t-3$, as other cases easily follow from this
most difficult case. Moreover, the proof for the case for the five components in $\Q$ is exactly the same. Indeed, as remarked right
after Lemma \ref{even1}, there is one vertex in each component of
$\Q$ that is adjacent to all the vertices of $Z'$. We can think of this
vertex as $S$, and the same argument below can apply\footnote{Strictly speaking, if $|Z'|=t-2$, we need to say ``ten components'' (instead of five as in
Lemma \ref{even1}) in the conclusion. But this does not make difference, so we do not bother to change it. We do not deal with the case $|Z'|=t-1$ here. We refer the reader to the remark right after the proof of Lemma \ref{t52} for the case when $|Z'|=t-1$.}.
So we only consider $S$ in $G_0$. Let us consider one face $F$ of odd size in $G_0$ (so this face is in one of the disks).

Note that the distance $d(,)$ can be only defined for either the sphere with at least three cuffs or the embedding in a surface (with positive Euler genus) with representativity at least $d(6,g)$. So our assumption implies that we assume either of these two situations.

Let $\hat G$ be obtained from $G_0$ by deleting all the graphs inside the disks $D_1,D_2,\dots,D_l$ that are bounded by closed curves $C'_1,\dots,C'_l$.
So every face in $\hat G$ is of even size.

Suppose $\hat G$ is bipartite. Then the idea is to first pick up three vertices $S' \subset S$ such that $S'$ are contained in the same partite set of the bipartite graph $\hat G$, and then find three disjoint paths from $S'$ to $F$. In this way, we obtain an odd $K_3$-model (in $G-Z'$) with the three vertices (in $S'$) in three different nodes of
the odd $K_3$-model. This odd $K_3$-model, together with $Z'$, gives rise to an odd $K_t$-model because every vertex in $S'$ is
adjacent to all the vertices in $Z'$.

This can be achieved by first
taking some three vertices $a_1, a_2, a_3$ in $F$. Then if there are three disjoint paths from $S'$ to $a_1,a_2, a_3$
that are internally disjoint from $F$ (except for their end vertices), we are done. Indeed, such three paths exist
by Theorem \ref{gm7} (by choosing appropriate three vertices $a_1, a_2, a_3$).

It remains to consider the case when $\hat G$ is not bipartite. In this case, we need some notations.

We are now given a surface $\Sigma$ with disks $D_1,\dots, D_l$, and
we consider an embedding $\sigma$ of $G_0$ induced by $\hat G$
such that every face is of even size. Two curves $P_1, P_2$ with the same endvertices in $\sigma$ are called \DEF{homoplastic} if there is a homeomorphism $\alpha : \Sigma \hookrightarrow \Sigma$ such
that
\begin{enumerate}
\item[(i)]
$\alpha(x) =x$ for all $x \in bd(\Sigma)$, and
\item[(ii)]
the curve $\alpha(P_1)$ is homotopic to $P_2$ in $\Sigma$.
\end{enumerate}

The equivalence classes of this equivalence relation are called \DEF{homoplasty classes}.
We can also define \DEF{homoplastic} and \DEF{homoplasty classes} for closed curves in $\Sigma$.

For any two closed curves $P_1, P_2$ with the same endvertices in $\sigma$, if they are homotopic,
then parity of $P_1$ is the same as that of $P_2$, because $P_1 \cup P_2$ bounds a disk and the graph inside
this disk is an induced bipartite graph (since each face is of even size). We claim that
the same conclusion holds even if $P_1$ and $P_2$ are homoplastic. Again, $P_1 \cup P_2$
bounds a chain of disks from one endvertex to the other endvertex, and graphs inside these disks
are induced bipartite, so parity of $P_1$ is the same as that of $P_2$. The same argument
can be applied to the case when two closed curves are either homotopic or homoplastic.

Let us consider the graph inside the disk $D_1$.
It contains the odd face $F$.
We take three disjoint paths from $F$ to $V(D_1)$ such that
they do not intersect $F$ except for the endpoints. It is straightforward to see that such paths exist in the graph inside the disk $D_1$.
Let $v_1, v_2, v_3$ be the endpoints of these three paths in $V(D_1)$.

Now pick up five faces $F_1, F_2, F_3, F_4, F_5$ such that $s_i$ is in $F_i$ for $i=1,2,3,4,5$.
By the above observation, we know that disks $D_1,\dots,D_l$ and faces $F_1, F_2, F_3, F_4, F_5$ define homoplasty classes
with respect to paths with endvertices in $D_1 \cup \dots \cup D_l \cup F_1 \cup F_2 \cup F_3 \cup F_4 \cup F_5$.
We first figure out the parity of paths from $s_i$ to $v_1, v_2, v_3$.
By pigeon whole principle, we may assume that
we can specify three disjoint paths $P_i$ from $s_i$ to $v_i$ for $i=1,2,3$, with the same parity (by choosing
appropriate homoplasty class, because $\hat G$ is not bipartite), and by Theorem \ref{gm7},
we can find such three disjoint paths $P_1, P_2, P_3$ with this specified parity. Because the graph inside the disk $D_1$ contains the odd face $F$ and the above three disjoint paths from $F$ to $V(D_1)$, three disjoint paths $P_1, P_2, P_3$ allow us
to obtain an odd $K_3$-model (in $G-Z'$) with the three vertices ($v_1, v_2, v_3$) in three different nodes of
the odd $K_3$-model. This odd $K_3$-model, together with $Z'$, gives rise to an odd $K_t$-model because every vertex in $S$ is
adjacent to all the vertices in $Z'$.
\end{proof}

Let us observe that the same proof still holds in Lemma \ref{t52} even
if $l=0$, but $G-Z'$ is not bipartite. We just need the three
disjoint paths so that we obtain an odd $K_3$-model (in $G-Z'$) with some three vertices (in $S$) in three different nodes of
the odd $K_3$-model. This is exactly the same as the above proof,
so we omit the details.

We also remark that if $|Z'|=t-1$, then we can show that either there is an odd $K_t$-model or the conclusion of Lemma \ref{t52} holds.  To see this, let us first remark that there is no vertex in $G_0$ that is adjacent to all the vertices in $Z'$. Let us also note that if one partite set of $Q$ sees all $t-1$ vertices in $Z'$, we can contract this partite set into one to obtain an odd $K_t$-minor. Otherwise, we can show that either $A_1$ or $B_1$ sees at least $t-3$ vertices in $Z_1$, by following the remark right after Lemma \ref{even1} (since we only need case-analysis, we omit details).

So one component $Q_1$ of $\Q$ can be contracted into a single point $v'$ so that it sees all but at most two vertices (say $a,b$) in $Z_1$. Take two more components $Q_2, Q_3 \in \Q$.
We try to connect $v'$ to both $Q_2$ and $Q_3$ so that we can obtain edges $v'a,v'b$, via odd-minor-operations.
To do this, we first apply Lemma \ref{t5} to confirm that the assumption of Lemma \ref{t52} is satisfied with $l \leq 3$.
We then apply the first half of the
above proof of Lemma \ref{t52} to $Q_1, Q_2, Q_3$ (using either an odd face or odd non-contractible cycles in the surface) to obtain an odd $K_t$-minor.
Since the proof is identical to the first half of that given in Lemma \ref{t52}, we omit the proof.

\drop{

Finally, we have to generalize Lemma \ref{t52} to the case when $|Z'|=t-4$, with help of Lemma \ref{t4}.

\begin{lemma}
\label{t41}
\showlabel{t41}
Assume that there is a vertex set $Z'$ of order exactly $t-4$ such that $Z'$ induces a clique, and
with $Z' \subset Z$, $G-Z$ has a nearly embedding in a surface $\Sigma$ of Euler genus $g$ (but is not bipartite).

Suppose furthermore that
there is a set of $l \geq 1$ disks $D_1,D_2,\dots,D_l$ that are bounded by closed curves $C'_1,\dots,C'_l$, such that
if we delete all the graphs inside these disks, then
\begin{itemize}
\item
the resulting graph of $G'_0$ in the surface has no odd faces, and
\item
there is no graph $W'$ in $\W$ that attaches to a face in the resulting graph of $G_0$, such that $W'$ contains an odd cycle $C$, and for any some  vertices $u,v$ in $W' \cap G_0$, there are two disjoint paths from $u,v$ to $C$ in $W'$.
\end{itemize}
Then there are no ten vertices in $G_0$ that are pairwise of distance at least $d(12,g)$ and
that have neighbors to all the vertices in $Z'$. Note that $d(,)$ comes from Theorem \ref{gm7}.
\end{lemma}

The proof below just combines the proofs of Lemmas \ref{t5} and \ref{t52}. So we refrain from giving the detailed proofs, but just give a sketch, and refer the reader to the detailed proofs of Lemmas \ref{t5} and \ref{t52}.
\begin{proof}
Suppose there are ten vertices $S$ as in Lemma \ref{t41}.
We first apply Lemma \ref{t4}. So we can confirm $l \leq 5$. We now prove $l \leq 3$.
We follow the proof of Lemma \ref{t52} to
obtain an odd $K_{t-1}$-model first. For this, as in the proof of Lemma \ref{t52}, we need just one odd face $F$ in the graph in the disk $D_1$ (or a non-bipartite
graph with face size all even in a surface) and some three vertices $S' \subset S$ with $S'=\{s_1,s_2,s_3\}$. We pick up one vertex $v \in S-S'$, and pick up two odd faces $F_1, F_2$ in the graphs in the disks $D_2, D_3$, respectively. We then consider the following disjoint paths:
\begin{itemize}
\item
three disjoint paths from $s_1,s_2,v$ to $F_1$, and
\item
two disjoint paths from $v,s_3$ to $F_2$.
\end{itemize}
It is straightforward to see that if these disjoint paths, together with three disjoint paths from $S'$ to $F$, give rise to an odd $K_t$-model, because we obtain an odd $K_4$-model (in $G-Z'$) with the four vertices ($s_1, s_2, s_3, v$) in four different nodes of
the odd $K_4$-model. This odd $K_4$-model, together with $Z'$, gives rise to an odd $K_t$-model.
To find such eight disjoint paths, we just need to redo the exactly same proof in Lemma \ref{t52} (with changing the distance function accordingly), so we omit the proof.

The same argument can also apply to the following cases:
\begin{itemize}
\item
$l=2$ but $\sigma$ is not sphere, and
\item
$l=1$, $\Sigma$ is not sphere and $G_0-D_l$ is not bipartite.
\end{itemize}
In this case, as in the second half of the proof of Lemma \ref{t52}, we just need one odd face $F$ and
a non-bipartite graph with face size all even in a surface. Because we are given ten vertices $S$,
we can pick up four vertices $S' \subset S$ that we can apply the   $l=1$ if $\Sigma$ is not sphere.
We omit the details, as they are really same.

In addition,
the same argument also works if $l=2$ and either $Z-Z'\not=\emptyset$ or $\V\not=\emptyset$.
Note that there is no odd $K_4$-model in a planar graph with exactly two odd faces \cite{ge}.
Actually, the latter case is just a combination of the proofs of Lemmas \ref{t5} and \ref{t52}. We will have a path $P$ such that $G_0 \cup P$ gives rise to a non-planar graph. Let $u, v$ be the endpoints of this path $P$. As in the proof of Lemma \ref{t5}, if $u,v$ are far apart, we are done. Otherwise, we just cut along the shortest curve between $u$ and $v$.

So it remains to consider the cases when
\begin{itemize}
\item
$l \leq 2$, $\Sigma$ is sphere, $Z-Z'=\emptyset$ and $\V=\emptyset$, and
\item
$l=1$, $\Sigma$ is not sphere and $G-D_l$ is bipartite.
\end{itemize}
Let us observe that in both cases, $\W=\emptyset$ by Lemma \ref{even1}.
In the former case, $G-Z'$ is embedded in sphere.
It follows from \cite{ge} that

\end{proof}
}
\medskip

We are now ready to prove Theorem \ref{main1}.\par

\medskip

{\bf Proof of Theorem \ref{main1}.}\par

\medskip

Let us first point out that we may assume that
a given graph $G$ is a minimal counterexample to the odd Hadwiger's conjecture
for the case $t \geq 6$. For otherwise, we can perform a reduction. Then we apply the whole argument to the resulting graph.
So we now assume that there is no more reduction for $G$.

We first apply Theorem \ref{thm:extended} to $G$
with $k=t$ and $\delta=100000 \alpha^{6} 2^{2\alpha} \times d(3t,g)$. Note that if 1 or 2 of Theorem \ref{main1} happens, we are done.

So we obtain a tree-decomposition $(T,Y)$, as in Theorem \ref{thm:extended}. If tree-width of $G$ is at most $100000 \alpha^{5} 2^{\alpha} \times d(3t,g) \log n$, we are done.

There are two cases:\par

\medskip

\DEF{Case 1.} There is a bag $Y_t$ which is nearly bipartite (i.e., the first conclusion  in Theorem \ref{thm:extended}).\par

\medskip

Apply Lemma \ref{re4} to confirm that the number of children bags of $Y_t$ is at most $\alpha^2 2^{\alpha}$. This indeed confirms the degree condition
on $Y_t$ in the second conclusion of 3 in Theorem \ref{main1}.\par

\medskip

\DEF{Case 2.} There is a  a bag $Y_t$ which is $\alpha$-near embedded (i.e., the second conclusion in Theorem \ref{thm:extended}).\par

\medskip

For simplicity, we use $G_0$ for the surface part of an $\alpha$-near embedding of $Y_t$. We also follow the notations for $\alpha$-near embeddings ($\W, \V$ etc).

For each $G_i \in \W$, if there is an induced connected
bipartite graph $Q$ in $G_i$ such that $Q$ is an induced block,
%
then we take such a bipartite graph. More precisely, if we take a block decomposition of $G_i$ (i.e., take a tree-decomposition $(T,Y)$
such that for each $tt' \in T$, $|Y_t \cap Y_{t'}| =1$), then we take all $Y_t$ that are bipartite.
Note that $G_i$ may contain two or more such induced bipartite graphs.
Let $\Q$ be the union of these induced bipartite blocks in $\W$.

\drop{
For each $G_i \in \W$, if there is an induced connected bipartite
graph $Q$ in $G_i$ such that the separation $(Q,G_i-Q)$
has order at most one or $Q = G_i$, then we take such
a bipartite graph. Note that $G_i$ may contain two or more such induced bipartite graphs. Let $\Q$ be the
union of these induced bipartite components in $\W$.}

For each $G_i \in \W$, let us also take an induced subgraph $R$ which contains an odd cycle and which does not
contain a subgraph in $\Q$. Let $\R$ be the union of these induced subgraphs in $\W$.
Apply Lemma \ref{odd1} to obtain the conclusion as in Lemma \ref{odd1}, which, by Lemma \ref{refine}, implies that there are at most
$l \leq 2^{\alpha}\alpha^2$ disks $\D'=\{D_1, \dots, D_l\}$ (that are bounded by closed curves $C'_1,\dots,C'_l$)
that cover all the components in $\R$,
such that each graph in a disk in $\D'$ is a vortex of depth
$2^{\alpha+1}\alpha^2 \times d(3t,g)$.

Let us observe that each component $Q$ in $\Q$ has at least $t-2$ neighbors to $Z$.
For otherwise, take a vertex $v$ in $Q-G_0$, and delete all edges incident with $v$, except for the ones in $E(Q)$. Contract these edges into a single point. Let $G'$ be the resulting graph. By minimality, $G'$ has a  $(t-1)$-coloring which
can be extended to $Q$.

\drop{
We then keep identifying two vertices $u, v \in Z$
(if $uv \not \in E(G)$) using components in $\Q$
(see Fact \ref{easy3}). When we stuck, let $\Q'$ be the components
in $\Q$ that are used for this identification. Let $Z_1$ be the resulting
set of $Z$.
By Lemma \ref{even1},
if $|\Q| > |Z|^22^{|Z|}$, then $Z$ contains a clique $Z_1$
of order at least $t-3$, and moreover there are at least $|Z|^2/2$
elements in $\Q-\Q'$ that have neighbors to all of the vertices in
the clique in $Z_1$.
}


Let $U$ be a vertex set in $G_0$ such that each vertex in $U$ has at least $t-6$ neighbors
in $Z$ (at least $t-5$ neighbors in $Z$ when $\Sigma $ is sphere).
\drop{
As above, we keep identifying two vertices $u, v \in Z_1$
(if $uv \not \in E(Z_1)$) using vertices in $U$
(see Fact \ref{easy3}). When we stuck, let $\hat U \subset U$ be vertices that are used for this identification. Let $Z_2$ be the resulting
set of $Z_1$.


At the moment, if there are two non adjacent vertices $u,v$ in $Z_2$,
then this means that there is no component in $\Q-\Q'$ nor a vertex in $U-\hat U$
that has neighbors to both $u$ and $v$.
}

We need to consider two cases:\par

\medskip

\DEF{Case 2.1.} $|U|+ |\Q| > 2\alpha^22^{\alpha}$, and either $U$ or $\Q$ contains twenty one elements that are pairwise of distance $d(30,g)+180$, where $d(,)$ comes from Theorem \ref{gm7}. Note that $d'(g) \geq d(30,g)+180$ in Lemma \ref{t5}. 

\medskip

In this case, if $|\Q| \geq \alpha^22^{\alpha}$, then
by Lemma \ref{even1},
we can reduce a subset of $Z$ to a clique $Z_1$
of order at least $t-3$, via odd-minor-operations.
Let $\Q' \subset \Q$ be the components that are used to create
$Z_1$ via odd-minor-operations. By Lemma \ref{even1},
there are at least $\alpha^2/2$
elements in $\Q-\Q'$ that have neighbors to all of the vertices in
the clique in $Z_1$ (note that no neighbors of each component in $\Q'$ is in $Z-Z_1$). Let them be $\Q''$.

By the same argument, if $|U| \geq \alpha^22^{\alpha}$,
we can still reduce subset of $Z$ to a clique $Z_1$
of order at most $t-1$, via odd-minor-operations.
Let $\hat U \subset U$ be the vertices that are used to create
$Z_1$ via odd-minor-operations. Again, by the same argument, there
are at least $\alpha^2/2$ vertices in $U-\hat U$ that have neighbors to all of the vertices in
the clique in $Z_1$. Let them be $\hat U'$.

Following, we assume one of the following happens:
\begin{itemize}
\item[(i)]
$Z_1$ is obtained from $\Q'$ (so $|Z_1| \geq t-3$ by Lemma \ref{even1}), and there are twenty one elements in $\Q''$ that are pairwise of distance $d(30,g)+180$, or
\item[(ii)]
$Z_1$ is obtained from $\hat U$, and $|Z_1| \geq t-6$ (or $|Z_1| \geq t-5$ when $\Sigma$ is sphere). Moreover, there are twenty one vertices in
$\hat U'$ that are pairwise of distance $d(30,g)+180$.
\end{itemize}

In case (ii),
apply Lemma \ref{t5} with all the cliques  that satisfy $Z_1$. Since $|Z| \leq \alpha$, there are at most $\alpha^t$ such cliques.
Similarly, in case (i), we shall apply Lemma \ref{t5} with all the cliques  that satisfy $Z_1$. Note that by Lemma \ref{even1}, 
$|Z| \geq t-3$. 
Let us give more details for this case. If $|Z_1|=t-2$ or $t-3$, then as remarked right after the proof of Lemma \ref{even1},
there is a vertex $v$ in $Q$ that is adjacent to at least $t-3$ neighbors of $Z_1$. So we can apply Lemma \ref{t5} with
these vertices that have at least $t-3$ neighbors in $Z_1$.

Note that our condition on $\delta$ guarantees that we can apply Lemma \ref{t5}.
If we obtain the third conclusion for some cliques,
we apply Lemma \ref{t52}. Note that if the first conclusion happens for some clique, we are done.

Suppose the second conclusion of Lemma \ref{t5} holds for some clique. Note that $\V=\emptyset$.
From Lemmas \ref{t51} and \ref{refine},
then the following holds: either
\begin{enumerate}
\item
$\W=\emptyset$ and $|Z_1| \leq t-5$, or
\item
there are at most six disks $\D_1$ whose interior are of radius at most $6(d(24,0)+24)$, such that no odd face in $G_0$ is outside these disks. Moreover, there is no graph $W'$  in $\W$ that attaches to a face (with at least two vertices) outside these disks, such that $W'$ contains an odd cycle $C$, and for some two vertices $u,v$ in $W' \cap G_0$, there are two disjoint paths from $u,v$ to $C$ in $W'$.
\end{enumerate}

Note that our condition on $\delta$ guarantees that we can apply Lemma \ref{t51}.
In the former case, we first $(t-5)$-color $Z_1$. This gives rise to
a coloring of $Z$. We can then
4-color $G-Z$ by the Four Color Theorem, because
$G-Z$ is planar, and the fact $|Z_1| \leq t-5$ implies that $Z$ only uses $(t-5)$-colors (so $G$ is $(t-1)$-colorable, and we are done). So if there is one clique $Z_1$ that satisfies the former
case, we are done. So we may assume that this would not happen for
all cliques that satisfy $Z_1$ as in (ii).

\drop{
So consider the second case. Apply Theorem \ref{refine} with disks $\D_1$. Let $W$ be a graph obtained from Theorem \ref{refine}.
For each vertex $x$ in $G_0-W$ that has at least two neighbors,
contract all neighbors (which are independent) into one.
This is possible because there is no odd face outside the disks.

Let $W'$ be the resulting graph. By the minimality of $G$,
$W' \cup Z$ has a $(t-1)$-coloring. Let us uncontract each $x$.
Now the coloring of $W'$ also gives rise to a $(t-1)$-coloring of $W$
such that each $x$ sees at most one in the boundary of $W$.
This means that every vertex outside $W$ but having a neighbor in $W$
has two colors available (because it has at most $t-4$ vertices in $Z$). Since $G-W$ is planar, each vertex in the outer face boundary of $G-W$ (which may have neighbors to $W$) has
two colors available, and every vertex in $G-W$ that is not in the
outer face boundary of $G-W$ has three colors available (because it
does not have a neighbor in $W$).

Note that $G-Z-W'$ is bipartite, there is a valid coloring for $G-Z-W'$
that extends the coloring of $W \cup Z$.
So we obtain a $(t-1)$-coloring of $G$, a contradiction.
}

\medskip

Suppose the third conclusion of Lemma \ref{t5} holds for some cliques.
By Lemma \ref{even1}, if we apply case (i), then $|Z_1| \geq t-3$. But on the other hand, if we apply case (ii),
then $|Z_1|$ may be $t-4$ or $t-5$ or $t-6$.
Below, we are applying Lemma \ref{t52} (Again, our condition on $\delta$ guarantees that we can apply Lemma \ref{t52}).
Therefore, if we apply case (ii), then let $\hat U_1 \subset \hat U$ be a vertex set
that creates the clique $Z_1$ of order at least $t-3$, and let $\hat U_1' \subset \hat U'$ be a vertex set that have neighbors to all of the vertices in the clique $Z_1$. 

We only deal with the case when $|Z_1|=t-2$ or $t-3$.
Note that as remarked right after in the proof of Lemma \ref{even1},
if $|Z_1|=t-2$ or $t-3$, there is a vertex in each component of $\Q' \cup \Q''$ that is adjacent to at least $t-3$  vertices in $Z_1$.
If $|Z_1|=t-1$, as remarked right after Lemma \ref{t52},
we have at most two disks that cover all the components in $\Q' \cup \Q''$.
So we only focus on the cases $|Z_1| \leq t-2$.

From Lemmas \ref{t5},  \ref{t52} and \ref{refine}, the following holds:
there is a set of $l \leq 20$ disks $\D_2=\{D_1,D_2,\dots,D_l\}$ such that, graphs inside these disks yield a vortex of depth $40(d(30,g)+180)+2\alpha^2$ (these constants come from Lemma \ref{t5}), and if we delete all the graphs inside these disks, then
\begin{itemize}
\item
the resulting graph in the surface has no odd faces, and
\item
there is no graph $W'$  in $\W$ that attaches to a face in the resulting graph, such that $W'$ contains an odd cycle $C$, and for some two vertices $u,v$ in $W' \cap G_0$, there are two disjoint paths from $u,v$ to $C$ in $W'$.
\end{itemize}

Moreover, for each clique $Z_1$, either
\begin{enumerate}
\item[(a)]
there are another four disks $\D_3$ whose interior are of radius at most $4d(6,g)$, that cover all the vertices in all components in $\Q''$, or
\item[(b)]
there are another four disks $\D_3$ whose interior are of radius at most $4d(6,g)$, that cover all the vertices in $\hat U_1'$.
\end{enumerate}


As mentioned above, we apply the above arguments for all the cliques $Z_1$ as in (i) and (ii). We may also interchange $\Q'$ and $\Q''$ (also $\hat U$ and $\hat U'$); namely we construct the clique $Z_1$ via odd-minor-operation from components in $\Q''$.
Let $\D_1', \D_2', \D_3'$ be the unions of $\D_1, \D_2, \D_3$, respectively.

\medskip

So in summary, by applying Lemma \ref{refine} if some disks in $\D'_1 \cup \D'_2 \cup \D'$ have overlaps, the following holds:
there is a set of $l \leq 1000 \alpha^{t+2} 2^{\alpha}$ disks $\D'_1 \cup \D'_2 \cup \D'=\{D_1,D_2,\dots,D_l\}$ such that, graphs inside these disks yield a vortex of depth $10000 \alpha^{t+5} 2^{2\alpha} \times d(3t,g)$, and if we delete all the graphs inside these disks, then
\begin{itemize}
\item
the resulting graph in the surface has no odd faces, and
\item
there is no graph $W'$  in $\W$ that attaches to a face in the resulting graph, such that $W'$ contains an odd cycle $C$, and for some two vertices $u,v$ in $W' \cap G_0$, there are two disjoint paths from $u,v$ to $C$ in $W'$.
\end{itemize}

In addition,
from (a) and Lemmas \ref{even1} and \ref{t52}, we obtain the following:
\begin{quote}
There are at most $\alpha^22^{\alpha}$ components in $\Q$ that
attach to $G_0 - (\D'_1 \cup \D'_2 \cup \D')$. Let $\D''$ be the disks that cover all components in $\Q$ in $G_0-\D'_1-\D'_2-\D'$. 
So each disk in $\D''$ is of radius one. 
\end{quote}


By our choice, for each vertex $x$ in $G_0 - (\D'_1 \cup \D'_2 \cup \D' \cup \D'')$, its neighbors in $G-Z$ form an independent set.
So $x$ has to have at least $t-2$ neighbors in $Z$ (For otherwise, just delete all edges from $x$ to $Z$, and contract all other edges incident with $x$. Let $G'$ be the resulting graph. By minimality, $G'$ has a $(t-1)$-coloring, and this coloring can be easily
extended to $x$, a contradiction). If there are more than $\alpha^22^{\alpha}$ such vertices $T$, then  as argued before, we can obtain
a clique $Z_2$ in $Z$ via odd-minor-operations (only using these vertices). By (b), we know that $|Z_2| \leq t-4$.
Let $\hat T \subset T$ be the vertices to create this clique $Z_2$. For each vertex $t \in \hat T$,
we pick up two neighbors $t_1,t_2$ in $Z$ that would be contracted into a single vertex (to create the clique $Z_2$).
We then delete all edges whose endpoints are $t$ and a vertex in $Z$, except for $tt_1, tt_2$. Then contract remaining edges
incident with $t$ into a single vertex. Let $G'$ be the resulting graph.
Again by minimality, $G'$ has a $(t-1)$-coloring, and this coloring can be easily
extended to each $t$, a contradiction. Thus it follows that
\begin{quote}
there are at most $\alpha^22^{\alpha}$ vertices in $G_0 - (\D'_1 \cup \D'_2 \cup \D' \cup \D'' \cup D_V)$, 
\end{quote}
where $\D_V$ consists of the  disks that accommodates vortices in $\V$ .
So each disk in $\D_V$ is of radius one, and $|\D_V| \leq \alpha$.

Moreover $G$ consists of the followings:
\begin{quote}
$Z$, at most $\alpha^22^{\alpha}$ vertices in $G_0 - (\D'_1 \cup \D'_2 \cup \D' \cup \D'')$ and graphs inside the disks $\D_1' \cup D_2' \cup \D' \cup \D'' \cup \D_V$.
\end{quote}

We shall later prove that such a graph is of tree-width at most $c \log n$ for some constant $c$ that only depends on $t$.
Note that there may be two disks in $\D'_1 \cup \D'_2 \cup \D' \cup \D'' \cup D_V$, but by Lemma \ref{refine}, the number of disjoint disks in $\D'_1 \cup \D'_2 \cup \D' \cup \D'' \cup D_V$ is at most
$10000 \alpha^{t+2} 2^{\alpha}$, and
graphs inside these disks yield a vortex of depth
of depth at most $100000 \alpha^{t+5} 2^{2\alpha} \times d(3t,g)$.

\drop{
We apply Lemma \ref{cru} to $G_0$ with $W'$ (see the definition right before Lemma \ref{cru}) with respect to the union of the
disks $\D',\D_V,\D_1',\D_2', \D_3'$.

obtained from the following disks (in addition to $\D_V$):
\begin{enumerate}
\item
$\D',\D'',\D_V,\D_1$ for the second conclusion of Lemma \ref{t5}, or
\item
$\D',\D'',\D_V,\D_2 \cup \D_4$ for the case (i) above, or
\item
$\D',\D'',\D_V,\D_2 \cup \D_3$ for the case (ii) above.
\end{enumerate}

Note that, by Lemma \ref{refine}, the number of disks is at most
$10000 \alpha^{t+2} 2^{\alpha}$, and
graphs inside these disks yield a vortex of depth
of depth at most $100000 \alpha^{t+5} 2^{2\alpha} \times d(3t,g)$.

\medskip

Let $\hat W$ be a graph obtained from Lemma \ref{cru}. We now assume the following:

\begin{quote}
$G_0-\hat W$  is not empty.
\end{quote}

By our choice, there is no small vortex in $\W$ that attached a face in $G_0-\hat W$.
For each vertex $x \in V(G_0 - \hat W)$, since there is no odd face in $G_0-\hat W$, it follows that
neighbors of $x$ in $G_0$ induces an independent set. On the other hand, $x$ has at most $t-4$ neighbors in $Z$.
So we just delete all edges from $x$ to $Z$, and contract all edges incident with $x$ in $G_0$.
Let $G'$ be the resulting graph. By minimality, $G'$ has a $(t-1)$-coloring, and this coloring can be easily
extended to $x$, a contradiction.


For each vertex $x$ in $G_0-\hat W$ that has at least two neighbors in $\hat W$,
we first delete all incident edges of $x$, except for the edges with endpoints in $\hat W$.
We then contract all the remaining (whose neighbors are independent) into one.
This is possible because there is no odd face outside the disks.

Let $\hat W'$ be the resulting graph. Note that $G_0-Z-\hat W'$ may not be planar, and may have more than one components. but it is bipartite. Let
$G_P=G-(G_0-Z-\hat W')$.
By the minimality of $G$ (and since $G_0-\hat W$ is not empty),
$G_P$ has a $(t-1)$-coloring. Let us uncontract each $x$.
Now the coloring of $\hat W'$ also gives rise to a $(t-1)$-coloring of $\hat W$
such that each $x$ sees at most one color in the boundary of $\hat W$.
This means that every vertex outside $\hat W$ but having a neighbor in $\hat W$
has two admissible colors  (because it has at most
at most $t-4$ neighbors in $Z$),
and every other vertex outside $\hat W$
has three common admissible colors, say $\{a,b,c\}$, (because it
does not have a neighbor in $\hat W$).

Note that $G_0-Z-\hat W$ is bipartite. So there is a valid coloring for $G_0-Z-\hat W$
that extends the coloring of $G_P$, as follows: We first color the vertices of one partite set $A$ (of $(A,B$)) of $G_0-Z-\hat W$, that have no neighbors in $\hat W$, as $a$, and color the vertices of the other partite set $B$ of $G_0-Z-\hat W$, that have no neighbors in $W$, as $b$. Now we shall color the vertices $L$ that have at least two neighbors in $\hat W$.
We just color $L \cap A$ with no $b$ and color $L \cap B$ with no $a$. This is possible because each vertex in $L$ has two admissible colors.
So we obtain a $(t-1)$-coloring of the whole graph $G$, a contradiction.

Note that we have to consider the case when $G_0-\hat W$  is empty. This will be discussed later. We also deal with the case when $|Z_1| \leq t-7$ (or $|Z_1| \leq t-6$ if $\Sigma$ is sphere) later.}

\medskip

\DEF{Case 2.2.} $|U| + |\Q| \leq  2|Z|^22^{|Z|}$ or
both $U$ or $\Q$ does not contain twenty one elements that are pairwise of distance $d(30,g)+180$, where $d(,)$ comes from Theorem \ref{gm7}.

\medskip

Let $\D_5$ be the faces of $G_0$ that cover
all the vertices in $U$ and all the elements of $\Q$.
By the assumption of Case 2.2,
$|\D_5| \leq 2|Z|^22^{|Z|}$.

Apply Lemma \ref{cru} to $G_0$, with
$\D', \D_5, \D_V$ (as in Case 2.1.). Let $\hat W$ be a graph obtained from Lemma \ref{cru}.
Note that, by Lemma \ref{refine}, the number of disks is at most
$10000 \alpha^2 2^{\alpha}$, and each disk can accommodate a vortex
of depth at most $100000 \alpha^5 2^{2\alpha} \times d(3t,g)$.

Let us observe that, at this moment,
there are no graphs in $\W$ outside
the disks. So $G_0-Z_2-\hat W$ is planar.

We now assume the following:
\begin{quote}
$G_0- \hat W$  is not empty.
\end{quote}

Again,
in this case, we are trying to make a reduction.
For each vertex $x$ in $G_0-\hat W$ that has three neighbors in $\hat W$,
we first delete all incident edges of $x$, except for the edges with endpoints in $\hat W$. We select two neighbors of $x$ in
$\hat W$ that are independent, and delete one more edge whose endpoint is not in these two neighbors (if $x$ has three neighbors in $\hat W$).
We then contract the two edges, together with $x$,
into one.
This is possible (for otherwise, there is a
separating triangle $T$ in $G_0$. Then any coloring of $T \cup Z$ can
be extended to a $(t-1)$-coloring of interior by Corollary \ref{cor:Th5}. Note that each vertex in $G_0-W-Z$ has at most $t-6$ neighbors in $Z$).
Let $\hat W'$ be the resulting graph. Note that $G_0-Z-\hat W'$ is planar, but may have more than one components. Let
$G_P=G'-(G_0-Z-\hat W')$.
By the minimality of $G$ (and since $G_0-\hat W$ is not empty),
$G_P$ has a $(t-1)$-coloring. Let us uncontract each $x$.
Now the coloring of $\hat W'$ also gives rise to a $(t-1)$-coloring of $\hat W$
such that each $x$ (in $G_0-\hat W$ that has three neighbors in $\hat W$) sees at most two colors in the boundary of $\hat W$.
This means that every vertex outside $\hat W$ but having a neighbor in $\hat W$
has three admissible colors (because each vertex in $G_0-\hat W-Z$ has at most $t-6$ neighbors in $Z$).
Since $G_0-Z-\hat W$ is planar and since each $x$ is in the outer face boundary of the planar graph,
each vertex in the outer face boundary of $G_0-Z-\hat W$ (which may have neighbors to $\hat W$) has
three admissible colors, and every vertex in $G_0-Z-\hat W$ that is not in the
outer face boundary of $G_0-Z-\hat W$ has five admissible colors (because it
does not have a neighbor in $\hat W$).
So by Theorem \ref{thm:Th5},
we can color $G_0-Z-\hat W$ that is consistent with the $(t-1)$-coloring of $\hat W \cup Z$ (i.e., we can extend a $(t-1)$-coloring of $\hat W \cup Z$
to a $(t-1)$-coloring of the whole graph).
So we obtain a $(t-1)$-coloring of the whole graph $G$, a contradiction.

\bigskip

So far, we can find a reduction, i.e., what we have shown is the following:
\begin{quote}
If $G_0-\hat W-Z$ has at least $\alpha^2 2^{\alpha}$ vertices, then we can perform the reduction, i.e, we obtain the graph $\hat W'$ from $\hat W$ as above, and
then we can delete all the vertices in $G_0-\hat W'-Z$, because any $(t-1)$-coloring
of $G-(G_0-\hat W'-Z)$ can be extended to a $(t-1)$-coloring of the whole graph $G$. This shows that $G$ is no longer a minimal counterexample to the odd Hadwiger's conjecture for the case $t$.
\end{quote}

It remains to consider the case when $G_0-W-\hat Z$  does not have $\alpha^2 2^{\alpha}$ vertices. We will discuss this case for both Cases 2.1 and 2.2 simultaneously. By Lemma \ref{cru},
\begin{quote}
tree-width of $G_0$ is less than $10000 \alpha^{5} 2^{2\alpha} \times d(3t,g) \log n$, where $d(,)$ comes from Theorem \ref{gm7}.
\end{quote}

We will show that under this situation, Theorem \ref{main1} is satisfied.

We now start constructing a tree-decomposition $(T',Y')$ of $Y_t$ with tree-with at most $10000 \alpha^6 2^{2\alpha} \times d(3t,g) \log n$ such that
\begin{enumerate}
\item[(a)]
all children of $Y_t$ is attached to a single bag $Y'_t \in Y'$, and
\item[(b)]
the intersection between the parent bag $Y_{t'}$ of $Y_t$ and $Y_t$ is also contained in one single bag $Y'_{t'} \in Y'$
(for some $t' \in T'$).
\end{enumerate}

If we obtain such a tree-decomposition, we obtain the structure as in Theorem \ref{main1}.

Let us start a tree-decomposition  $(T'',Y'')$ of $G''_0$ which is obtained from $G'_0$ by adding edges between adjacent society vertices $w_j^i,w_{j+1}^i$ of a large vortex $(G_i,\Omega_i)$.
So $w_j^i,w_{j+1}^i \in \Omega_i$ and they are consecutive in the cyclic order of $\Omega_i$.
It is straightforward to see that tree-width of $G''_0$ is still
at most $100000 \alpha^5 2^{\alpha} \times d(3t,g) \log n$ (indeed, when we construct $W$ as above, we can start this resulting cuff. Then the rest of the arguments is the exactly same).

So, we can find such a tree-decomposition $(T'',Y'')$ of $G''_0$ in polynomial time by
Theorem \ref{tree1} because $g, d(3t,g)$ are constants.

Let us construct a tree-decomposition $(T',Y')$ of $Y_t$ from
the tree-decomposition $(T'',Y'')$ of $G''_0$ .
We first add the large vortices $\V$. This only increases the width by the factor $\alpha$ because we only need to expand each vertex of $Y''_t$ to the subgraph of order $\alpha$.
Then add $Z$ to each bag of $Y''_t$. This only increases the width by the additive factor $\alpha$.

So tree-width of the resulting decomposition is at most
$100000 \alpha^6 2^{2\alpha} \times d(3t,g) \log n$.
Then  (a) is satisfied because of 2 in Theorem \ref{thm:extended}.
Also (b) is satisfied because $Z$ is contained in every bag of $Y'_t \in Y'$. This completes the proof of Theorem \ref{main1}.\qed

\end{document}